\theoremstyle{plain}
\newtheorem{theorem}{Theorem}
\newtheorem{corollary}[theorem]{Corollary}
\newtheorem{lemma}[theorem]{Lemma}
\newtheorem{proposition}[theorem]{Proposition}
\theoremstyle{definition}
\newcommand{\email}[1]{\href{mailto:#1}{\texttt{#1}}} % require hyperref
\newcommand{\sfR}{\mathsf{R}}
\newcommand{\sfTS}{\mathsf{TS}}
\newcommand{\sfTJ}{\mathsf{TJ}}
\newcommand{\sfTAR}{\mathsf{TAR}}
\newcommand{\TS}[2]{\sfTS_{#1}(#2)} % TS_k(G) = \TS{k}{G}
\newcommand{\TJ}[2]{\sfTJ_{#1}(#2)} % TJ_k(G) = \TJ{k}{G}
\newcommand{\TAR}[2]{\sfTAR_{#1}(#2)} % TAR_k(G) = \TAR{k}{G}
\newcommand{\TARR}[2]{\sfTAR^{#1}(#2)} % TAR^k(G) = \TARR{k}{G}
\newcommand{\R}[2]{\sfR_{#1}(#2)} % R_k(G) = \R{k}{G}
\newcommand{\Uni}{\mathsf{Uni}}
\newcommand{\Int}{\mathsf{Int}}
\title{A Note on Reconfiguration Graphs of Cliques}
\author{Quan~N.~Lam$^1$\thanks{Work completed while at University of Science, Ho Chi Minh City, Vietnam.} \and Huu-An~Phan$^2$\thanks{Corresponding author} \and Duc~A.~Hoang$^3$}
\date{
	$^1$ Université Gustave Eiffel, Champs-sur-Marne, France\\
	\email{nhat-quan.lam@edu.univ-eiffel.fr}\\
	$^2$ Nanyang Technological University, Singapore\\
	\email{phanan23467@gmail.com}\\
	$^3$ VNU University of Science, Vietnam National University, Hanoi, Vietnam\\
	\email{hoanganhduc@hus.edu.vn}\\[2ex]
	\today
}
\begin{document}
\maketitle

\begin{abstract}

In a reconfiguration setting, each clique of a graph $G$ is viewed as a set of tokens placed on vertices of $G$ such that no vertex has more than one token and any two tokens are adjacent.
Three well-known reconfiguration rules have been studied in the literature: Token Jumping ($\mathsf{TJ}$), Token Sliding ($\mathsf{TS}$), and Token Addition/Removal ($\mathsf{TAR}$).
Given a graph $G$ and a reconfiguration rule $\mathsf{R} \in \{\mathsf{TS}, \mathsf{TJ}, \mathsf{TAR}\}$, a reconfiguration graph of $k$-cliques of $G$, denoted by $\mathsf{R}_k(G)$, is the graph whose vertices are cliques of $G$ of size $k$ and two vertices are adjacent if one can be obtained from the other by applying $\mathsf{R}$ exactly once.
In this paper, we initiate the study of structural properties of reconfiguration graphs of cliques, proving several interesting results primarily under $\mathsf{TS}$ and $\mathsf{TJ}$ rules.
In particular, we establish a formula relating the clique number of $G$ and that of $\mathsf{TS}_k(G)$, and bound the chromatic number of $\mathsf{TS}_k(G)$ via that of an appropriate Johnson graph.
Additionally, we present an algorithm to construct $\mathsf{TS}_{\omega(G)-1}(G)$ from $\mathsf{TJ}_{\omega(G)}(G)$ and derive structural properties of $\mathsf{TJ}_{\omega(G)}(G)$ graphs, where $\omega(G)$ denotes the clique number of $G$.
Finally, we show that $\mathsf{TS}_k(G)$ is planar whenever $G$ is planar and establish bounds on the number of $3$- and $4$-cliques based on results concerning $\mathsf{TS}_k(G)$ graphs. In particular, we prove that any planar graph $G$ with $n$ vertices can contain at most $3n - 8$ triangles, which aligns with the classical bound on maximal planar graphs.

\noindent\textbf{Keywords:} combinatorial reconfiguration, reconfiguration graph, clique, structural properties

%% MSC Code from https://mathscinet.ams.org/mathscinet/msc/msc2020.html?t=05Cxx
\noindent\textbf{2020 MSC:} 05C99  % in Graph Theory section
\end{abstract}

\section{Introduction}
\label{sec:intro}

\subsection{Combinatorial Reconfiguration}

Recently, \textit{combinatorial reconfiguration} has emerged in different areas of computer science, including recreational mathematics (e.g., games and puzzles), computational geometry (e.g., flip graphs of triangulations), constraint satisfaction (e.g., solution space of Boolean formulas), and even quantum complexity theory (e.g., ground state connectivity).
Given a \textit{source problem} $\mathcal{P}$ (e.g., \textsc{Satisfiability}, \textsc{Vertex-Coloring},
\textsc{Independent Set}, etc.) and a prescribed \textit{reconfiguration rule} $\sfR$ that usually describes a ``small'' change in a feasible solution of $\mathcal{P}$ (e.g., satisfying truth assignments, proper vertex-colorings, independent sets, etc.) without affecting its feasibility (e.g., flipping one bit of a satisfying truth assignment, recoloring one vertex of a proper vertex-coloring, adding/removing one member of an independent set, etc.), one can define the corresponding \textit{reconfiguration graph of $\mathcal{P}$ under the rule $\sfR$} as follows.
In such a graph, each \textit{feasible solution} of $\mathcal{P}$ is a node, and two nodes are \textit{adjacent} if one can be obtained from the other by applying the rule $\sfR$ exactly once.
In particular, the reconfiguration graph of satisfying truth assignments of a Boolean formula on $n$ variables under the ``one-bit-flipping rule'', also known as the \textit{solution graph} of a Boolean formula, is an induced subgraph of the well-known \textit{hypercube} graph $Q_n$---the graph whose nodes are length-$n$ binary strings and two nodes are adjacent if they differ in exactly one bit. 
(See \cref{fig:exa-SAT-Reconf}.)

\begin{figure}[ht]
	\centering
	\includegraphics[width=0.3\textwidth]{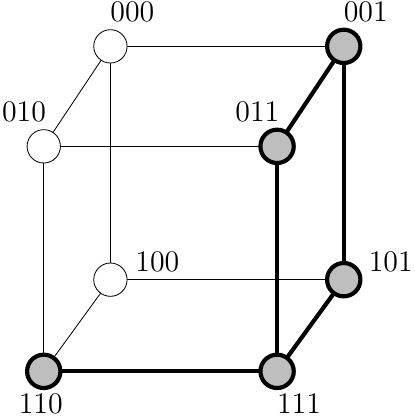}
	\caption{The reconfiguration graph of satisfying truth assignments of the Boolean formula $(x \land y) \lor z$ under the ``one-bit-flipping rule'' is an induced subgraph of $Q_3$. Each binary string corresponds to a truth assignment of the variables $x$, $y$, and $z$, respectively.}
	\label{fig:exa-SAT-Reconf}
\end{figure}

To the best of our knowledge, reconfiguration graphs have been studied from both \textit{algorithmic} and \textit{graph-theoretic} viewpoints.
From the \textit{algorithmic perspective}, the main goal is to understand whether certain algorithmic questions, such as finding a (shortest) path between two given nodes of the reconfiguration graph, can be answered efficiently, and if so, design an algorithm to do it.
A major challenge is that the size of a reconfiguration graph is often quite huge, and thus the whole graph can never be a part of the input.
From the \textit{graph-theoretic perspective}, the main goal is to study the structural properties (e.g., connectedness, bipartitedness, Hamiltonicity, etc.) of reconfiguration graphs as well as classify them based on known graph classes (which graphs are reconfiguration graphs). 
With respect to several well-known source problems, reconfiguration graphs have been extensively studied from the algorithmic viewpoint~\cite{Heuvel13,Nishimura18,BousquetMNS22}.
On the other hand, from the \textit{graph-theoretic perspective}, reconfiguration graphs have been well-characterized only for a limited number of source problems---namely those whose ``feasible solutions'' are satisfying truth assignments of a Boolean formula~\cite{Scharpfenecker15}, or general vertex subsets~\cite{MonroyFHHUW12}, (maximum) matchings~\cite{ErohS98}, dominating sets, or proper vertex-colorings~\cite{MynhardtN19} of a graph.
We refer readers to the surveys~\cite{Heuvel13,Nishimura18,MynhardtN19,BousquetMNS22} for more details on recent advances in this research area.

\subsection{Reconfiguration of Cliques}

In this paper, we consider \textsc{Clique} as the source problem. A \textit{clique} in a graph $G$ is a subset of vertices that are all adjacent. Each clique can be viewed as a set of tokens on the vertices of $G$, with no vertex holding more than one token. The following reconfiguration rules are well-known:
\begin{itemize}
	\item \textbf{Token Jumping ($\sfTJ$):} Two cliques are \textit{adjacent under $\sfTJ$} if one can be transformed into the other by moving a token to an unoccupied vertex.
	\item \textbf{Token Sliding ($\sfTS$):} Two cliques are \textit{adjacent under $\sfTS$} if one can be transformed into the other by moving a token to an adjacent unoccupied vertex.
	\item \textbf{Token Addition/Removal ($\sfTAR$):} Two cliques are \textit{adjacent under $\sfTAR_k$ (resp. $\sfTAR^k$)} if one can be transformed into the other by adding a token to an unoccupied vertex or removing one from an occupied vertex, ensuring the resulting set has at least (resp. at most) $k$ tokens. (Here, $\emptyset$ is considered a clique of size $0$.)
\end{itemize}
Given a graph $G$ and possibly an integer $k \geq 0$, one can define different reconfiguration graphs of cliques of $G$ as follows.
\begin{itemize}
	\item $\TJ{}{G}$ (resp. $\TJ{k}{G}$) is the graph whose nodes are cliques (resp. cliques of size $k$) of $G$ and edges are defined under $\sfTJ$. Similar definitions hold for $\TS{}{G}$ and $\TS{k}{G}$.
	\item $\TAR{k}{G}$ (resp. $\TARR{k}{G}$) is the graph whose nodes are cliques of size \textit{at least} (resp. \textit{at most}) $k$ of $G$ and edges are defined under $\sfTAR_k$ (resp. $\sfTAR^k$).  For convenience, we denote by $\TAR{}{G}$ the graph $\TAR{0}{G} \cong \TARR{\omega(G)}{G}$, where $\omega(G)$ is the maximum size of a clique of $G$. By definition, $\TAR{}{G}$ is indeed the union of $\TAR{k}{G}$ and $\TARR{k}{G}$.
\end{itemize}
Naturally, for $\sfR \in \{\sfTS, \sfTJ, \sfTAR\}$ and a given graph $G$, we call $G$ a \textit{$\sfR$-reconfiguration graph} (or simply \textit{$\sfR$-graph}) if there exists a graph $H$ such that $G \cong \R{}{H}$.
We define a \textit{$\sfR_k$-reconfiguration graph} (or simply \textit{$\sfR_k$-graph}) and a \textit{$\sfR^k$-reconfiguration graph} (or simply \textit{$\sfR^k$-graph}) similarly.

\begin{figure}[ht]
	\centering
	\includegraphics[width=0.7\textwidth]{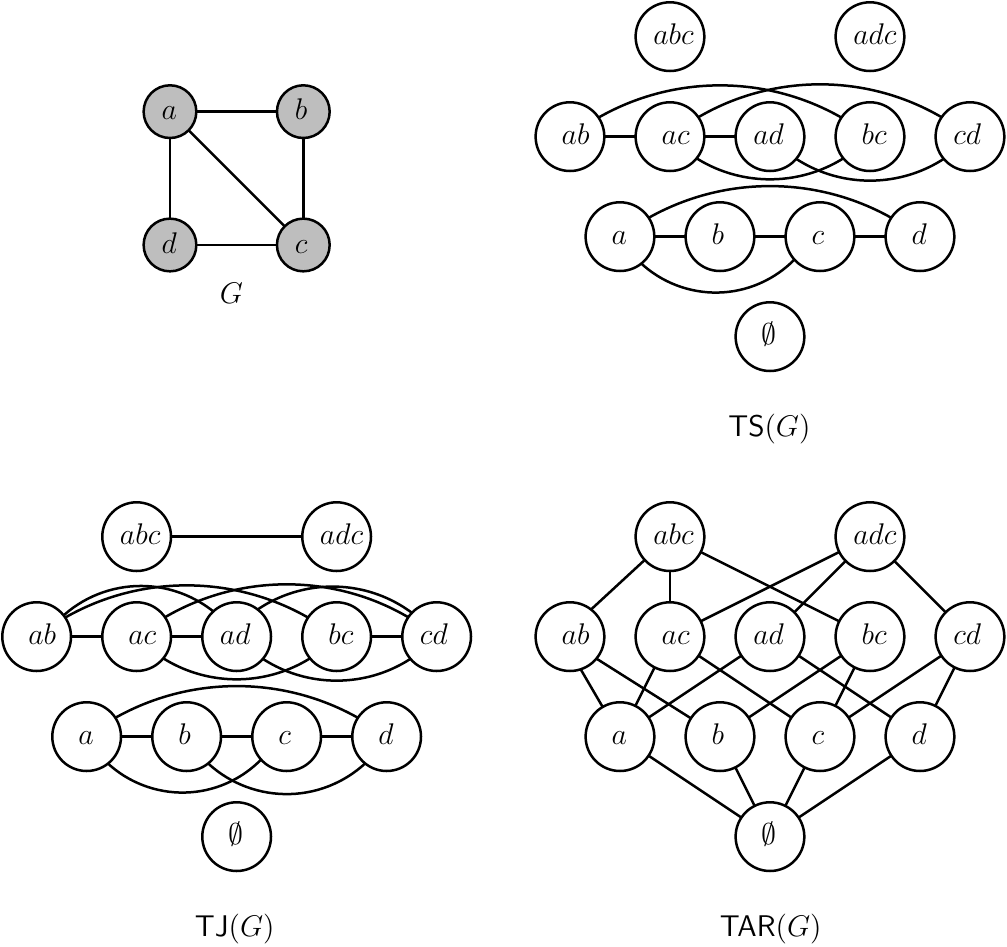}
	\caption{An example of a graph $G$ and some reconfiguration graphs of cliques of $G$. Each label inside a white-colored vertex (e.g., $abc$) indicates a clique of $G$ (e.g., $\{a, b, c\}$).}
\end{figure}

We review related results on reconfiguration graphs of cliques from both algorithmic and graph-theoretic perspectives.
Algorithmically, Ito~et~al.~\cite{ItoOO15,ItoOO23} initiated the study of \textsc{Clique Reconfiguration (CR)} under $\sfR \in \{\sfTS, \sfTJ, \sfTAR_k\}$, asking whether a path exists between two given cliques in $\sfR(G)$.
They showed that $\sfTS$, $\sfTJ$, and $\sfTAR_k$ are equivalent regarding polynomial-time solvability, with \textsc{CR} being PSPACE-complete for perfect graphs but polynomial-time solvable for even-hole-free graphs and cographs.
They also designed polynomial-time algorithms for the shortest path variant when $G$ is chordal, bipartite, planar, or has bounded treewidth.

A different way of looking at a clique is to consider it as a collection of edges instead of vertices.
With this viewpoint, the \textit{edge-variants} of \textsc{CR} can be defined similarly under all above rules.
Hanaka~et~al.~\cite{HanakaIMMNSSV20} initiated the study of these edge-variants (as restricted cases of a more generalized problem called \textsc{Subgraph Reconfiguration}) from the algorithmic viewpoint by showing that the problems can be solved in linear time under any of $\sfTS$ and $\sfTJ$.

From the graph-theoretic viewpoint, the $\sfTAR$-graph (of a graph) was first introduced in 1989 by Bandelt and {van de Vel}~\cite{BandeltV89} under the name \textit{simplex graph}\footnote{Apparently, in some contexts, a \textit{simplex} is a vertex subset where any two members form an edge (which is the same as our definition of a clique) and a \textit{clique} is a maximal simplex which is not contained in any larger simplex.}.
% Simplex graphs (of several graphs, including complete graphs, cycles, complement of paths, bipartite graphs and their complement graphs, and so on) have been well-studied in the context of metric graph theory~\cite{ImrichKM99,BergeDH24}.
% In particular, any simplex graph is also a median graph~\cite{BandeltV89}, and the $\sfTAR^2$-graphs (also known as \textit{$2$-simplex graphs}) have been used in~\cite{ImrichKM99} to establish a close connection between the complexity of recognizing triangle-free graphs and that of median graphs.
% %We summarize some basic properties of simplex graphs in %\cref{app:property_simplex_graphs}.
The $\sfTS_k$-graph of a complete graph $K_n$ is closely related to the so-called \textit{token graphs}~\cite{MonroyFHHUW12} and \textit{Johnson graphs}~\cite{HoltonS93}.
A \textit{token graph} of a graph $G$, denoted by $F_k(G)$, is a graph whose vertices are size-$k$ vertex-subsets of $G$ and two vertices are adjacent if one can be obtained from the other by applying a single $\sfTS$-move.
A \textit{Johnson graph} $J(n, k)$ is a graph whose vertices are size-$k$ subsets of an $n$-element set and two vertices are adjacent if their intersection is of size exactly $k-1$.
One can readily verify that $\sfTS_k(K_n) \cong F_k(K_n) \cong J(n, k)$.
To the best of our knowledge, reconfiguration graphs of cliques under $\sfTS$ or $\sfTJ$ have not yet been systematically studied in the literature.

\subsection{Our Problem and Results}

In this paper, we consider reconfiguration graphs of cliques and study their structural properties.
We prove a number of interesting results, most of which are under $\sfTS$ and $\sfTJ$ rules. 

In \cref{sec:TS}, we study the maximum clique size and chromatic number of $\sfTS_k(G)$. We establish a formula relating $\omega(G)$ and $\omega(\sfTS_k(G))$, and bound $\chi(\sfTS_k(G))$ via the chromatic number of an appropriate Johnson graph.

In \cref{sec:TJ}, we focus on $\sfTJ_k$-graphs with $k = \omega(G)$. We give an algorithm to construct $\sfTS_{\omega(G)-1}(G)$ from $\sfTJ_{\omega(G)}(G)$ and derive structural properties of maximum $\sfTJ$ graphs, i.e., graphs $H$ for which $H \cong \sfTJ_{\omega(G)}(G)$ for some $G$. 

Finally, in \cref{sec:planar-TSk}, we prove that $\sfTS_k(G)$ is planar whenever $G$ is planar. We also derive bounds on the number of $3$- and $4$-cliques in terms of $|E(G)|$. Specifically, for $3$-cliques, we prove that any planar graph $G$ with $n = |V(G)|$ vertices can have at most $3n - 8$ copies of $K_3$, which matches the classical bound on maximal planar graphs~\cite{hakimi1979number}.

\section{Preliminaries}
\label{sec:preliminaries}

For the concepts and notations not defined here, we refer readers to~\cite{Diestel2017}.
Unless otherwise mentioned, all graphs in this paper are simple, connected, and undirected.
We denote by $V(G)$ and $E(G)$ the vertex-set and edge-set of a graph $G$, respectively.
The \textit{chromatic number} of $G$, denoted by $\chi(G)$, is the smallest number of colors that can be used to color vertices of $G$ such that no two adjacent vertices receive the same color.
For two sets $X, Y$, we sometimes write $X + Y$ and $X - Y$ to respectively indicate $X \cup Y$ and $X \setminus Y$.
When $Y = \{y\}$, we simply write $X + y$ and $X - y$ instead of $X + \{y\}$ and $X - \{y\}$, respectively.
We denote by $X \Delta Y$ the \textit{symmetric difference} of $X$ and $Y$, i.e., $X \Delta Y = (X - Y) + (Y - X)$.
For a vertex-subset $X$, we denote by $G[X]$ the subgraph of $G$ induced by vertices in $X$.

We now formally define the $\sfTS$, $\sfTJ$, and $\sfTAR$ rules.
Two cliques $C$ and $C^\prime$ of $G$ are \textit{adjacent under $\sfTJ$} if there exist $u, v \in V(G)$ such that $C - C^\prime = \{u\}$ and $C^\prime - C = \{v\}$.  
We say that they are \textit{adjacent under $\sfTS$} if one additional constraint $uv \in E(G)$ is satisfied.
Two cliques $C$ and $C^\prime$ are \textit{adjacent under $\sfTAR_k$ (resp. $\sfTAR^k$)} if $|C \Delta C^\prime| = 1$ and $\min\{|C|, |C^\prime|\} \geq k$ (resp. $\max\{|C|, |C^\prime|\} \leq k$).

\section{Token Sliding}
\label{sec:TS}

We prove a useful observation regarding cliques in $\sfTS_k$-graphs.
We remark that similar observations for Johnson graphs have been proved in~\cite{ShuldinerO22}.
\begin{lemma}\label{lem:clique-in-TSgraph}
	Let $G$ be a graph.
	Suppose that $\sfTS_k(G)$ contains a complete subgraph $K_n$ ($n \geq 3$) whose vertices $A_1, \dots, A_n$ are $k$-cliques of $G$.
	There exist either a clique $\Uni \subseteq V(G)$ of size $k + 1$ and pairwise distinct vertices $a_1, \dots, a_n \in \Uni$ such that $A_i = \Uni - a_i$ or a clique $\Int \subseteq V(G)$ of size $k - 1$ and pairwise distinct vertices $a_1, \dots, a_n \in V(G) \setminus \Int$ such that $A_i = \Int + a_i$, where $1 \leq i \leq n$.
	In particular, when $n > k + 1$, a clique $\Int$ satisfying the above conditions exists.
\end{lemma}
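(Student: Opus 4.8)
The plan is to exploit that $\sfTS$-adjacency of two $k$-cliques $A_i, A_j$ forces $|A_i \cap A_j| = k-1$ together with the extra edge guaranteed by the sliding move. First I would fix the pair $A_1, A_2$ and set $S = A_1 \cap A_2$, so that $|S| = k-1$ and I may write $A_1 = S + x$ and $A_2 = S + y$ with $x \neq y$ and $x, y \notin S$. Since $A_1$ and $A_2$ are adjacent under $\sfTS$, the move happens along an edge, i.e. $xy \in E(G)$; hence $\Uni := S + x + y$ is a clique of size $k+1$, as every pair of its vertices is adjacent (inside $S$, from $S$ to $x$, from $S$ to $y$, and $x$ to $y$). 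This is the candidate for the $\Uni$ case, while $S$ is the candidate for $\Int$, being a subset of the clique $A_1$ and hence automatically a clique.

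Next I would classify each $A_i$ according to $s_i := |A_i \cap S|$. From $|A_i \cap (S + x)| = k-1$ and $|A_i \cap (S + y)| = k-1$ I get $s_i + [x \in A_i] = k-1 = s_i + [y \in A_i]$, forcing $[x \in A_i] = [y \in A_i] = (k-1) - s_i \in \{0,1\}$ and therefore $s_i \in \{k-2, k-1\}$. This splits the cliques into two types: type I, where $A_i \supseteq S$ and $A_i = S + z_i$ for a single vertex $z_i \notin S$ (note that $A_1, A_2$ are of this type); and type II, where $x, y \in A_i$, the clique $A_i$ contains exactly $k-2$ vertices of $S$, and consequently $A_i \subseteq \Uni$.

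The dichotomy then follows. If every $A_i$ is of type I, all of them contain $S$, and writing $A_i = \Int + a_i$ with $\Int = S$ and $a_i = z_i$ gives the $\Int$ case; the $a_i$ are pairwise distinct because the $A_i$ are. Otherwise some $A_j$ is of type II, say $A_j = \Uni - w$ with $w \in S$. I claim every type-I clique is then $A_1$ or $A_2$: if $A_i = S + z_i$ had $z_i \notin \{x, y\}$, a direct computation gives $A_i \cap A_j = S - w$, so $|A_i \cap A_j| = k-2 \neq k-1$, contradicting $\sfTS$-adjacency. Hence all type-I cliques lie in $\{A_1, A_2\} \subseteq \Uni$, while all type-II cliques lie in $\Uni$ by construction; thus every $A_i \subseteq \Uni$, and since $|A_i| = k = |\Uni| - 1$, each $A_i = \Uni - a_i$ for a unique $a_i \in \Uni$, with the $a_i$ distinct. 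This is the $\Uni$ case.

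Finally, for the ``in particular'' claim I would observe that a $(k+1)$-set has exactly $k+1$ distinct $k$-subsets, so the $\Uni$ case can accommodate at most $k+1$ pairwise distinct cliques; therefore if $n > k+1$ the $\Uni$ case is impossible and the $\Int$ case must hold. The only genuinely delicate point is recognizing that the sliding edge $xy$ is precisely what makes $\Uni$ a clique (under $\sfTJ$ alone this could fail, since $x$ and $y$ need not be adjacent), together with the careful intersection-size bookkeeping that pins $s_i$ to $\{k-2, k-1\}$ and produces the contradiction in the mixed case.
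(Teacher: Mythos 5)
Your proof is correct, and it takes a genuinely different route from the paper. The paper argues by induction on $n$: a base case $n=3$ distinguishing $|A_1 \cap A_2 \cap A_3| \in \{k-1, k-2\}$, followed by a four-case inductive step in which two cases extend $\Uni_{n-1}$ or $\Int_{n-1}$ and the two mixed cases are killed by exhibiting two elements in $A_n \setminus A_3$ or $A_3 \setminus A_n$. You instead give a direct, non-inductive argument: anchor on the single pair $A_1 = S + x$, $A_2 = S + y$ with $S = A_1 \cap A_2$, use the sliding edge $xy \in E(G)$ to certify that $\Uni = S + x + y$ is a $(k+1)$-clique, and then classify every remaining $A_i$ by the indicator computation $s_i + [x \in A_i] = s_i + [y \in A_i] = k-1$, which pins $|A_i \cap S|$ to $\{k-1, k-2\}$; the mixed-type contradiction $A_i \cap A_j = S - w$ of size $k-2$ is the analogue of the paper's two contradiction cases, but carried out once against the fixed anchor rather than re-derived at each inductive step. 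Your version is shorter and makes the logical dependencies more transparent --- in particular, it isolates the one place where $\sfTS$ (rather than $\sfTJ$) is needed, namely making $\Uni$ a clique. One refinement of your closing remark: while your two-clique construction of $\Uni$ indeed fails under $\sfTJ$, once $n \geq 3$ any type-II witness $A_j$ contains both $x$ and $y$ and, being a clique, itself supplies the edge $xy$; this is why the paper's base case transfers verbatim to $\sfTJ_k(G)$ in its \cref{lem:max-clique-intersect-TJ}, and your argument could be adapted the same way. The paper's inductive formulation, though longer, has the structural advantage that its base case is exactly the piece recycled later for the $\sfTJ$ results; your proof would serve equally well for that purpose with the adaptation just described. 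The ``in particular'' clause is handled identically in both proofs (a $(k+1)$-set has only $k+1$ distinct $k$-subsets).
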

\begin{proof}
	Observe that $|A_i \cap A_j| = k - 1$ for $1 \leq i, j \leq n$.
	We prove the statement by induction on $n$.
	For the base case $n = 3$, let $A = A_1 \cap A_2$ and $B = A_1 \cap A_3$.
	By definition, $|A| = |B| = k-1$.
	As $A$ and $B$ are both subsets of $A_1$ of size $k-1$, it follows that $|A_1| = k \geq |A \cup B| = |A| + |B| - |A \cap B| = 2k - 2 - |A \cap B|$.
	Thus, $|A \cap B| \geq k - 2$. 
	On the other hand, $|A \cap B| \leq |A| = |B| = k-1$.
	Let $S = A \cap B = A_1 \cap A_2 \cap A_3$.
	We have two cases: $|S| = k-1$ and $|S| = k-2$.
	\begin{itemize}
		\item If $|S| = k-1$, it follows that there exist pairwise distinct vertices $a, b, c \in V(G)$ such that $A_1 = S + a$, $A_2 = S + b$, and $A_3 = S + c$.
		In this case, we can define $\Int = S$.
		\item If $|S| = k-2$, it follows that there exist pairwise distinct vertices $a, b, c \in V(G)$ such that $A_1 = S + a + b$, $A_2 = S + b + c$, and $A_3 = S + a + c$.
		In this case, we can define $\Uni = S + a + b + c$.
	\end{itemize}

	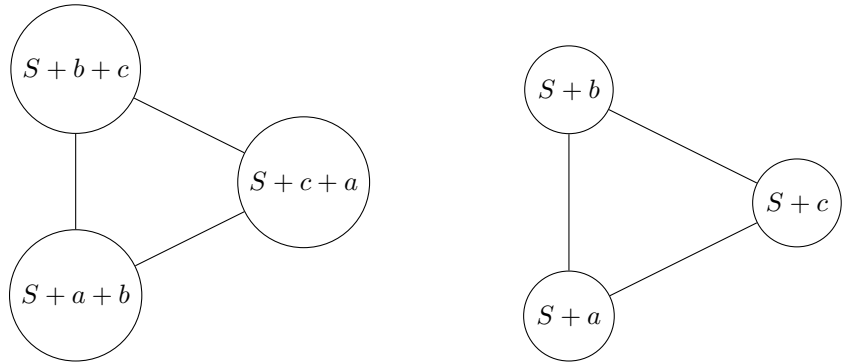
\begin{figure}[ht]
		\centering
		\begin{subfigure}[b]{0.4\textwidth}
		  \centering
		  \begin{tikzpicture}[every node/.style = {circle, draw, fill=white, minimum size=1cm}]
			\foreach \l/\x/\y/\c in {Sab/0/0/{S+a+b}, Sbc/0/3/{S+b+c}, Sca/3/1.5/{S+c+a}}
			{
			  \node (\l) at (\x, \y) {$\c$};
			}
			\draw (Sab) -- (Sbc) -- (Sca) -- (Sab);
		  \end{tikzpicture}
		  \caption{$S = A_1 \cap A_2 \cap A_3$ has size $k - 2$}
		\end{subfigure}  
		\begin{subfigure}[b]{0.4\textwidth}
		  \centering
		  \begin{tikzpicture}[every node/.style = {circle, draw, fill=white, minimum size=1cm}]
			\foreach \l/\x/\y/\c in {Sa/0/0/{S+a}, Sb/0/3/{S+b}, Sc/3/1.5/{S+c}}
			{
			  \node (\l) at (\x, \y) {$\c$};
			}
			\draw (Sa) -- (Sb) -- (Sc) -- (Sa);
		  \end{tikzpicture}
		  \caption{$S = A_1 \cap A_2 \cap A_3$ has size $k - 1$}
		\end{subfigure}  
		\caption{Illustration for the base case $n = 3$}
	  \end{figure}

	Suppose that the lemma holds for $n - 1$.
	We claim that it also holds for $n$.
	Suppose that the $k$-cliques $A_1, \dots, A_n$ form a $K_n$ in $\sfTS_k(G)$.
	Thus, $A_1, \dots, A_{n-1}$ clearly form a $K_{n-1}$ in $\sfTS_k(G)$.
	By the inductive hypothesis, for $1 \leq i \leq n-1$, there exist either
	\begin{itemize}
		\item[(i)] a clique $\Uni_{n-1} \subseteq V(G)$ of size $k + 1$ and pairwise distinct vertices $a_1, \dots, a_{n-1} \in \Uni_{n-1}$ such that $A_i = \Uni_{n-1} - a_i$; or
		\item[(ii)] a clique $\Int_{n-1} \subseteq V(G)$ of size $k - 1$ and pairwise distinct vertices $a_1, \dots, a_{n-1} \in V(G) \setminus \Int_{n-1}$ such that $A_i = \Int_{n-1} + a_i$.
	\end{itemize}
	
	From our assumption, note that $A_1$, $A_2$, and $A_n$ form a $K_3$ in $\sfTS_k(G)$.
	From the base case, either $A_n = (A_1 \cap A_2) + x$ (when $|A_1 \cap A_2 \cap A_n| = k - 1$) or $A_n = (A_1 \cup A_2) - x$ (when $|A_1 \cap A_2 \cap A_n| = k - 2$), for some $x \in V(G)$.
	We consider four cases:
	\begin{itemize}
		\item \textbf{When (i) holds and $A_n = (A_1 \cap A_2) + x$.} 
		In this case, since $|A_1 \cap A_2| = k-1 = |A_n| - 1$, we have $x \notin A_1 \cap A_2$.
		Additionally, since $A_1 \neq A_n$, it follows that $x \notin A_1$. Similarly, $x \notin A_2$.
		Therefore, $x \notin A_1 \cup A_2$.
		Since $a_2 \in A_1$ and $a_1 \in A_2$, we have $A_1 \cup A_2 = \Uni_{n-1}$. 
		(Indeed, one can verify that $A_i \cup A_j = \Uni_{n-1}$ for any pair of pairwise distinct $i, j \in \{1, \dots, n-1\}$.)
		Since $x \notin A_1 \cup A_2 = \Uni_{n-1}$ and $A_3 = \Uni_{n-1} - a_3 \subseteq \Uni_{n-1}$, we have $x \notin A_3$.
		Additionally, since $x \notin \Uni_{n-1}$ and $a_3 \in \Uni_{n-1}$, we have $x \neq a_3$.
		Therefore, $\{x, a_3\} \subseteq A_n \setminus A_3$, which implies $|A_n \setminus A_3| \geq 2$.
		This contradicts the adjacency of $A_n$ and $A_3$ in $\sfTS_k(G)$.
		\item \textbf{When (i) holds and $A_n = (A_1 \cup A_2) - x$.} 
		Again, since $a_2 \in A_1$ and $a_1 \in A_2$, we have $A_1 \cup A_2 = \Uni_{n-1}$. 
		Thus, $x \in \Uni_{n-1}$.
		Moreover, as $A_n \notin \{A_1, \dots, A_{n-1}\}$, it follows that $x \notin \{a_1, \dots, a_{n-1}\}$.
		Hence, we set $\Uni = \Uni_{n-1}$ and $a_n = x$.
		The lemma holds for $n$.
		\item \textbf{When (ii) holds and $A_n = (A_1 \cap A_2) + x$.}
		In this case, $A_1 \cap A_2 = \Int_{n-1}$.
		(Indeed, one can verify that $A_1 \cap A_2 \cap \dots \cap A_{n-1} = \Int_{n-1}$.)
		Since $|A_1 \cap A_2| = k - 1 = |A_n| - 1$, we have $x \notin A_1 \cap A_2 = \Int_{n-1}$.
		Additionally, since $A_n \notin \{A_1, \dots, A_{n-1}\}$, it follows that $x \notin \{a_1, \dots, a_{n-1}\}$.
		Hence, we set $\Int = \Int_{n-1}$ and $a_n = x$.
		The lemma holds for $n$.
		\item \textbf{When (ii) holds and $A_n = (A_1 \cup A_2) - x$.}
		In this case, we have $A_n = (\Int_{n-1} + a_1 + a_2) - x$.
		Since $A_n \neq A_1$, we have $x \neq a_2$.
		Similarly, $x \neq a_1$.
		Since $|\Int_{n-1} + a_1 + a_2| = k + 1 > |A_n| = k$, it follows that $x \in \Int_{n-1}$.
		Since $x \in \Int_{n-1}$ and $a_3 \notin \Int_{n-1}$, we have $x \neq a_3$. 
		Additionally, $x \in A_3 = \Int_{n-1} + a_3$.
		Moreover, since $A_3 \notin \{A_1, A_2\}$, it follows that $a_3 \notin \{a_1, a_2\}$, and therefore $a_3 \notin \Int_{n-1} + a_1 + a_2$.
		Hence, $a_3 \notin A_n$.
		Therefore, $\{x, a_3\} \subseteq A_3 \setminus A_n$, which implies $|A_3 \setminus A_n| \geq 2$. 
		This contradicts the adjacency of $A_n$ and $A_3$ in $\sfTS_k(G)$.
	\end{itemize}
    In each of the above cases, we either verify the lemma directly, or derive a contradiction, which shows that this case cannot occur. Therefore, the lemma holds for all $n$. Our inductive proof is complete.

	Finally, we show that when $n > k + 1$, a clique $\Int$ satisfying the lemma exists.
	Note that a clique $\Uni$, if exists, must be of size exactly $k+1$ and contains at least $n$ members $a_1, \dots, a_n$.
	Clearly, this happens only when $n \leq k + 1$.
	Therefore, when $n > k + 1$, no clique $\Uni$ exists, and therefore $\Int$ exists.
\end{proof}

We now prove a relationship between the clique number of $G$ and that of $\sfTS_k(G)$.
A similar relationship has been established for Johnson graphs in~\cite{ShuldinerO22}.
\begin{theorem}\label{thm:clique-num-TSgraph}
	Let $G$ be a graph.
	\begin{enumerate}[(a)]
		\item If $k > \omega(G)$ then $\omega(\sfTS_k(G)) = 0$.
		\item If $k = \omega(G)$ then $\omega(\sfTS_k(G)) = 1$.
		\item If $k < \omega(G)$ then $\omega(\sfTS_k(G)) = \max\{k + 1, \omega(G) - k + 1\}$.
	\end{enumerate}
\end{theorem}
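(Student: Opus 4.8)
The plan is to handle (a) and (b) by direct counting and then prove (c) by matching a lower bound (explicit constructions) against an upper bound that leans on \cref{lem:clique-in-TSgraph}. For (a), when $k > \omega(G)$ there is no $k$-clique at all, so $\sfTS_k(G)$ has no vertices and hence $\omega(\sfTS_k(G)) = 0$. For (b), when $k = \omega(G)$ there is at least one $k$-clique, so $\sfTS_k(G)$ is nonempty; I would then show it has no edge. Indeed, if two $k$-cliques $C = S + a$ and $C' = S + b$ with $S = C \cap C'$ were $\sfTS$-adjacent, then the sliding constraint $ab \in E(G)$ together with the fact that $a$ and $b$ are each adjacent to every vertex of $S$ (since $S + a$ and $S + b$ are cliques) would make $S + a + b$ a clique of size $k + 1 > \omega(G)$, a contradiction. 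Thus $\sfTS_k(G)$ is edgeless and $\omega(\sfTS_k(G)) = 1$.

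For the lower bound in (c) (assuming $k \geq 1$; the degenerate $k = 0$ case yields a single vertex), I would exhibit two complete subgraphs of $\sfTS_k(G)$. Since $\omega(G) > k$, choose a $(k+1)$-clique $\Uni = \{a_1, \dots, a_{k+1}\}$ of $G$; the $k$-cliques $\Uni - a_i$ are pairwise $\sfTS$-adjacent, differing by a single slide along the edge $a_i a_j \in E(G)$, so they span a $K_{k+1}$. Next choose a maximum clique $W$ of size $\omega(G)$, fix any $(k-1)$-subset $\Int \subseteq W$, and let $a_1, \dots, a_{\omega(G)-k+1}$ enumerate $W \setminus \Int$; the $k$-cliques $\Int + a_i$ are again pairwise $\sfTS$-adjacent, giving a $K_{\omega(G)-k+1}$. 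Hence $\omega(\sfTS_k(G)) \geq \max\{k+1,\ \omega(G)-k+1\}$.

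For the upper bound, I would take a maximum complete subgraph of $\sfTS_k(G)$ on vertices $A_1, \dots, A_n$ and bound $n$. When $n \leq 2$ the bound is immediate, since $k \geq 1$ gives $k+1 \geq 2 \geq n$. When $n \geq 3$, \cref{lem:clique-in-TSgraph} supplies one of two structures. In the $\Uni$ case, the pairwise distinct $a_i$ all lie in the $(k+1)$-element clique $\Uni$, so $n \leq k+1$. In the $\Int$ case, each $A_i = \Int + a_i$, and the crucial step is to observe that pairwise $\sfTS$-adjacency of the $A_i$ forces $a_i a_j \in E(G)$ for every $i \neq j$; combined with each $a_i$ being adjacent to all of $\Int$ (because $\Int + a_i$ is a clique), this shows $\Int \cup \{a_1, \dots, a_n\}$ is a clique of $G$ of size $(k-1) + n$, whence $n \leq \omega(G)-k+1$. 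In either case $n \leq \max\{k+1,\ \omega(G)-k+1\}$, which matches the lower bound and closes (c).

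The main obstacle is precisely this last adjacency argument in the $\Int$ case: \cref{lem:clique-in-TSgraph} only guarantees the common core $\Int$ and the pairwise distinctness of the $a_i$, and it is the $\sfTS$ sliding constraint—which has no analogue in the Johnson-graph setting where the underlying comparability relation is complete—that upgrades the set $\{a_1, \dots, a_n\}$ to a clique of $G$ and thereby pins the bound at $\omega(G)-k+1$ rather than something larger. Everything else is routine verification that the constructed cliques are genuinely $\sfTS$-adjacent and that the two regimes in the maximum are individually realized.
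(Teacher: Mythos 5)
Your proof is correct and follows essentially the same route as the paper: identical lower-bound constructions (the $k$-subsets of a $(k+1)$-clique and the sets $\Int + a_i$ inside a maximum clique), and the same upper-bound argument via \cref{lem:clique-in-TSgraph}, using the $\sfTS$-adjacency of $A_i = \Int + a_i$ and $A_j = \Int + a_j$ to force $a_ia_j \in E(G)$ and assemble a clique of size $n + k - 1$ in $G$. Your explicit handling of $n \leq 2$ and the $\Uni$ case is just a slight repackaging of the paper's shortcut (the lemma's ``in particular'' clause, which rules out $\Uni$ when $n > k+1$), and your remark about the degenerate $k = 0$ case is a reasonable caveat the paper leaves implicit.
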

\begin{proof}
	\begin{enumerate}[(a)]
		\item Trivial.
		\item We show that if $k = \omega(G)$ then $\sfTS_k(G)$ has no edge, which implies $\omega(\sfTS_k(G)) = 1$.
		Suppose to the contrary that $IJ \in E(\sfTS_k(G))$.
		Thus, $I \cup J$ forms a clique of size $k + 1$ in $G$, which contradicts the assumption $k = \omega(G)$.
		\item We first show that $\omega(\sfTS_k(G)) \geq \max\{k+1, \omega(G) - k + 1\}$.
		To this end, we describe how to construct a $(k+1)$-clique and a $(\omega(G) - k + 1)$-clique in $\sfTS_k(G)$, respectively.
		\begin{itemize}
			\item To construct a clique of size $k + 1$ in $\sfTS_k(G)$, let $A = \{a_1, \dots, a_{k+1}\}$ be any clique of size exactly $k + 1$ in $G$.
			Since $\omega(G) > k$, such a clique $A$ exists.
			Clearly, the vertices $A - a_i$ ($1 \leq i \leq k+1$) form a $(k+1)$-clique of $\sfTS_k(G)$.

			\item To construct a clique of size $\omega(G) - k + 1$ in $\sfTS_k(G)$, let $B = \{b_1, \dots, b_{\omega(G)}\}$ be any clique of size exactly $\omega(G)$ in $G$ and let $C = \{b_1, \dots, b_{k-1}\} \subseteq B$.
			Clearly, the vertices $C + b_j$ ($k \leq j \leq \omega(G)$) form a $(\omega(G) - k + 1)$-clique of $\sfTS_k(G)$.
		\end{itemize}
		
		It remains to show that $\omega(\sfTS_k(G)) \leq \max\{k+1, \omega(G) - k + 1\}$.
		To this end, we show that if $m = \omega(\sfTS_k(G)) > k + 1$ then $m \leq \omega(G) - k + 1$.
		Equivalently, we show that if $m = \omega(\sfTS_k(G)) > k + 1$ then $G$ has a clique of size $m + k - 1$ (which then implies $\omega(G) \geq m + k - 1$ and thus $m \leq \omega(G) - k + 1$).
		Let $C$ be a $m$-clique of $\sfTS_k(G)$ whose vertices $A_1, \dots, A_m$ are $k$-cliques of $G$.
		By \cref{lem:clique-in-TSgraph}, there exist a clique $\Int$ of size $k - 1$ and pairwise distinct vertices $a_1, \dots, a_m \in V(G) \setminus \Int$ such that $A_i = \Int + a_i$, where $1 \leq i \leq m$.
		As $A_i$ and $A_j$ are adjacent for any pairwise distinct pair $i, j \in \{1, \dots, m\}$, it follows that $a_ia_j \in E(G)$.
		Thus, $\Int + \{a_1, a_2, \dots, a_m\}$ is a $(m + k - 1)$-clique of $G$.
		Our proof is complete.
	\end{enumerate}
\end{proof}

In the next propositions, we will link the chromatic number of $\sfTS_k(G)$ with that of a Johnson graph. 
For a set $X$, we denote by $P(X)$ the \textit{power set} of $X$, i.e., the set of all subsets of $X$.
\begin{proposition}
    Given an arbitrary graph $G$ and number $k$, $\chi (\sfTS_k(G)) \leq \chi (J(\chi (G),k))$.
\end{proposition}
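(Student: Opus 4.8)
The plan is to exhibit a graph homomorphism from $\sfTS_k(G)$ to the Johnson graph $J(\chi(G),k)$ and then invoke the standard fact that the existence of a homomorphism $H \to H'$ forces $\chi(H) \le \chi(H')$. First I would fix an optimal proper coloring $c \colon V(G) \to \{1, \dots, \chi(G)\}$ of $G$. For a vertex $A$ of $\sfTS_k(G)$, i.e.\ a $k$-clique $A$ of $G$, the vertices of $A$ are pairwise adjacent, so $c$ assigns them $k$ pairwise distinct colors. Hence $\phi(A) := \{c(v) : v \in A\}$ is a size-$k$ subset of the palette $\{1, \dots, \chi(G)\}$, that is, a vertex of $J(\chi(G),k)$. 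This defines a map $\phi \colon V(\sfTS_k(G)) \to V(J(\chi(G),k))$.

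Next I would check that $\phi$ is a homomorphism. Suppose $A$ and $A'$ are adjacent in $\sfTS_k(G)$, so $A' = (A - u) + v$ for some $u \in A$ and $v \notin A$ with $uv \in E(G)$. Write $B = A - u$, a $(k-1)$-clique, and set $c(B) := \{c(w) : w \in B\}$. Since $u$ is adjacent to every vertex of $B$, we get $c(u) \notin c(B)$; likewise, since $A'$ is a clique, $v$ is adjacent to every vertex of $B$, so $c(v) \notin c(B)$. Finally $c(u) \neq c(v)$ because $uv \in E(G)$. Thus $\phi(A) = c(B) + c(u)$ and $\phi(A') = c(B) + c(v)$ are two distinct size-$k$ sets sharing exactly the $k-1$ colors of $c(B)$, so $|\phi(A) \cap \phi(A')| = k-1$; by definition of the Johnson graph they are adjacent in $J(\chi(G),k)$.

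To conclude, I would fix an optimal proper coloring $d$ of $J(\chi(G),k)$ using $\chi(J(\chi(G),k))$ colors. Then $d \circ \phi$ is a proper coloring of $\sfTS_k(G)$: any edge $AA'$ of $\sfTS_k(G)$ maps under $\phi$ to an edge $\phi(A)\phi(A')$ of $J(\chi(G),k)$, whence $d(\phi(A)) \neq d(\phi(A'))$. Since $d \circ \phi$ uses at most $\chi(J(\chi(G),k))$ colors, the claimed inequality follows.

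The only genuinely load-bearing verification is the second step---showing that a single $\sfTS$-slide perturbs the color set in exactly one coordinate, so that the image is an edge (intersection exactly $k-1$) rather than a loop or a non-edge. The crucial observations there are that cliques are rainbow-colored by $c$ and that both endpoints $u, v$ of the slid edge avoid all colors of the common part $B$, which is precisely what keeps $|\phi(A)\cap\phi(A')|$ equal to $k-1$. Degenerate situations, such as $k > \chi(G) \ge \omega(G)$ in which both $\sfTS_k(G)$ and $J(\chi(G),k)$ are empty, are trivially consistent with the bound.
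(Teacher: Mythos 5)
Your proof is correct and follows essentially the same route as the paper: both define the color-set map $A \mapsto \{c(v) : v \in A\}$, verify that a $\sfTS$-slide changes the color set in exactly one element (using that cliques are rainbow and that $uv \in E(G)$ forces $c(u) \neq c(v)$), and pull back an optimal coloring of $J(\chi(G),k)$ along this homomorphism. Your write-up is in fact slightly cleaner in isolating the homomorphism property and the edge case where both graphs are empty, but the substance is identical.
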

\begin{proof}
    Let $n=\chi(G)$ and $m=\chi(J(n,k))$.
	Let $c_1: V(G) \to \{1,2, \dots, n\}$ be a proper coloring of $G$ and $c_2: V(J(n,k)) \to \{1,2, \dots, m\}$ be a proper coloring of $J(n,k)$.

	We define $f: V(\sfTS_k(G)) \to P(\{1, 2, \dots, n\})$, $$f(A)=\{x\in \{1,2, \dots n\}: \exists y \in A,x=c_1(y) \}.$$ 
	We will prove that $f(A)$ is a $k$-element subset of $\{1,2, \dots,n\}$ and for any $A,B\in V(\sfTS_k(G))$ if $AB$ is an edge of $\sfTS_k(G)$ then $f(A)f(B)$ is an edge of $J(n,k)$.
     
Firstly, we will prove for all $A\in V(\sfTS_k(G))$, the set $f(A)$ has exactly $k$ elements. 
Let $A=\{a_1,a_2, \dots,a_k\}$ be any cliques with $k$ elements in $G$, we know that for all $i \neq j$, $a_ia_j\in E(G)$ so $c_1(a_i)\neq c_1(a_j)$. 
Hence, $f(A)=\{c_1(a_1),c_1(a_2), \dots, c_1(a_k)\}$, which is a $k$-element subset of $\{1,2, \dots,n\}$.

Secondly, we will prove that for all $A,B \in V_{\sfTS_k(G)}$ such that $AB\in E(\sfTS_k(G))$, we have $f(A)f(B)\in E(J(n,k))$. 
Let $A=\{a_1,a_2,\dots,a_{k-1},a_k\}$ and $B=\{a_1,a_2,\dots,a_{k-1},a_{k+1}\}$ be two cliques in $G$ such that $AB$ is an edge in $TS_k(G)$($a_ka_{k+1}$ has to also be an edge in $G$), we know that $f(A)=\{c_1(a_1),c_1(a_2),\dots,c_1(a_{k-1}),c_1(a_k)\}$, $f(B)=\{c_1(a_1),c_1(a_2),\dots,c_1(a_{k-1}),c_1(a_{k+1})\}$. 
Hence, $f(A)\cap f(B)=\{c_1(a_1),c_1(a_2),\dots,c_1(a_{k-1})\}$ because $c_1(a_k)\neq c_1(a_{k+1})$ ($a_ka_{k+1}$ is also an edge in $G$).

After that, we consider a coloring for $\sfTS_k(G)$, $c_3:V_{\sfTS_k(G)}\rightarrow \{1,2,\dots,m\}$ $$c_3(A)=c_2(f(A)).$$ 
Now let $AB$ be an edge in $\sfTS_k(G)$. 
Then, $f(A)f(B)$ is an edge in $J(n,k)$, which means $c_3(A)=c_2(f(A))\neq c_2(f(B))=c_3(B)$, so $c_3$ is a valid coloring with $m$ colors.
\end{proof}

\begin{proposition}
    Given an arbitrary graph $G$ and number $k$, $\chi (\sfTS_k(G)) \geq \chi (J(\omega (G),k))$.
\end{proposition}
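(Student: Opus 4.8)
The plan is to exhibit a copy of the Johnson graph $J(\omega(G), k)$ as a subgraph of $\sfTS_k(G)$ and then invoke the monotonicity of the chromatic number under taking subgraphs (any proper coloring restricts). The key observation is that \emph{inside a maximum clique of $G$ the token-sliding adjacency constraint becomes vacuous}, so that the induced reconfiguration graph is exactly a Johnson graph.

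First I would fix a clique $W = \{w_1, \dots, w_{\omega(G)}\}$ of $G$ of maximum size $\omega(G)$. Since $W$ is a clique, every $k$-element subset of $W$ is itself a $k$-clique of $G$, hence a vertex of $\sfTS_k(G)$. Let $H$ denote the subgraph of $\sfTS_k(G)$ induced by these $\binom{\omega(G)}{k}$ vertices.

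Next I would identify $H$ with $J(\omega(G), k)$. Recall that two $k$-cliques $A, B$ are adjacent in $\sfTS_k(G)$ precisely when there exist $u, v$ with $A - B = \{u\}$, $B - A = \{v\}$, and $uv \in E(G)$. When $A, B \subseteq W$, the differing vertices satisfy $u, v \in W$, and because $W$ is a clique the constraint $uv \in E(G)$ holds automatically. Thus $A$ and $B$ are adjacent in $H$ exactly when $|A \cap B| = k-1$, i.e. exactly when their symmetric difference has size $2$, which is the defining adjacency of the Johnson graph on the $\omega(G)$-element ground set $W$. Hence $H \cong J(\omega(G), k)$. Since $H$ is a subgraph of $\sfTS_k(G)$, every proper coloring of $\sfTS_k(G)$ restricts to a proper coloring of $H$, yielding $\chi(\sfTS_k(G)) \geq \chi(H) = \chi(J(\omega(G), k))$.

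I do not expect a substantial obstacle here; the only care needed is the verification that the $\sfTS$-adjacency coincides with the Johnson-graph adjacency on $W$, together with the handling of the degenerate ranges of $k$. If $k > \omega(G)$, then $J(\omega(G), k)$ has no vertices and $\chi(J(\omega(G), k)) = 0$, while if $k = \omega(G)$ it is a single vertex with $\chi = 1$; in both cases the claimed inequality holds trivially, consistent with \cref{thm:clique-num-TSgraph}. Together with the preceding proposition, the two bounds sandwich $\chi(\sfTS_k(G))$ between $\chi(J(\omega(G),k))$ and $\chi(J(\chi(G),k))$.
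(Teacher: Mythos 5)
Your proof is correct and follows essentially the same route as the paper's: both fix a maximum clique $W$ of $G$, observe that $\sfTS_k(G)$ restricted to the $k$-subsets of $W$ is exactly $\sfTS_k(G[W]) \cong J(\omega(G),k)$ (the sliding constraint being vacuous inside a clique), and conclude by monotonicity of $\chi$ under subgraphs. Your write-up merely spells out the adjacency verification and degenerate cases that the paper leaves implicit.
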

\begin{proof}
    Let $H$ be a clique of size $\omega(G)$ in $G$. 
	We know that $\sfTS_k(H)$ is a subgraph of $TS_k(G)$ and $J(\omega (G),k) \cong TS_k(H)$. Hence $\chi (\sfTS_k(G))\geq \chi (J(\omega (G),k))$.
\end{proof}

\section{Token Jumping}
\label{sec:TJ}

In this section, we consider $\sfTJ_k$-graphs.
As $\sfTS_k(G)$ is a subgraph of $\sfTJ_k(G)$, we immediately have the following consequence of \cref{thm:clique-num-TSgraph}.

\begin{corollary}
\label{cor:maxclique_TJ}
	If $k < \omega(G)$ then $\omega(\sfTJ_k(G)) \geq \max\{k + 1, \omega(G) - k + 1\}$.
\end{corollary}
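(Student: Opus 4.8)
The plan is to treat this as a direct consequence of the subgraph relationship between $\sfTS_k(G)$ and $\sfTJ_k(G)$, exactly as the surrounding text signals. The central observation is that both reconfiguration graphs share the same vertex set (namely, all $k$-cliques of $G$), and that every $\sfTS$-move is a special case of a $\sfTJ$-move: if two $k$-cliques $C$ and $C'$ satisfy $C - C' = \{u\}$, $C' - C = \{v\}$, and $uv \in E(G)$, then in particular $C - C' = \{u\}$ and $C' - C = \{v\}$, so they are adjacent under $\sfTJ$ as well. Hence every edge of $\sfTS_k(G)$ is also an edge of $\sfTJ_k(G)$, which means $\sfTS_k(G)$ is a spanning subgraph of $\sfTJ_k(G)$.

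Next I would invoke the monotonicity of the clique number under adding edges (equivalently, under passing to a supergraph on the same vertex set): any complete subgraph of $\sfTS_k(G)$ remains a complete subgraph of $\sfTJ_k(G)$, since all of its edges are preserved. Therefore $\omega(\sfTJ_k(G)) \geq \omega(\sfTS_k(G))$.

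Finally, I would apply part (c) of \cref{thm:clique-num-TSgraph}: under the hypothesis $k < \omega(G)$, we have $\omega(\sfTS_k(G)) = \max\{k + 1, \omega(G) - k + 1\}$. Chaining the two inequalities gives $\omega(\sfTJ_k(G)) \geq \max\{k + 1, \omega(G) - k + 1\}$, as desired.

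There is essentially no obstacle here; the only point that merits an explicit sentence is the verification that $\sfTS_k(G)$ and $\sfTJ_k(G)$ have identical vertex sets and that $E(\sfTS_k(G)) \subseteq E(\sfTJ_k(G))$, so that the spanning-subgraph claim—and hence the clique-number inequality—is fully justified rather than merely asserted. I would not expect to prove the matching upper bound; indeed the corollary is deliberately stated only as a lower bound, since $\sfTJ_k(G)$ may contain additional edges (from non-adjacent token jumps) that can enlarge complete subgraphs beyond what \cref{thm:clique-num-TSgraph} guarantees for $\sfTS_k(G)$.
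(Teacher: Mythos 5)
Your proposal is correct and follows exactly the paper's intended argument: the paper derives the corollary from \cref{thm:clique-num-TSgraph} via the single observation that $\sfTS_k(G)$ is a subgraph of $\sfTJ_k(G)$, which is precisely the spanning-subgraph and clique-number-monotonicity argument you spell out. Your extra verification that the two graphs share the vertex set of $k$-cliques and that every $\sfTS$-edge is a $\sfTJ$-edge is a faithful elaboration of what the paper leaves implicit.
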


For a graph $G$, we now consider $\sfTJ_{\omega(G)}(G)$ and prove some of its properties.

\begin{lemma}\label{lem:max-clique-intersect-TJ}
	Suppose that the $\omega(G)$-cliques $A, B, C$ of a graph $G$ form a triangle in $\sfTJ_{\omega(G)}(G)$.
	Then, $A \cap B = B \cap C = A \cap C$.
\end{lemma}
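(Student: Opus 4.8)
The plan is to translate triangle-adjacency in $\TJ{\omega(G)}{G}$ into equalities of intersection sizes, and then to show that the three cliques share a common $(\omega(G)-1)$-element ``core.'' Write $\omega = \omega(G)$. Since $A, B, C$ are $\omega$-cliques and $\sfTJ$-adjacency of equal-size cliques means $|A \cap B| = |B \cap C| = |A \cap C| = \omega - 1$, each pairwise intersection is a $(\omega-1)$-subset. First I would focus on the two intersections sitting inside $A$: as $A \cap B$ and $A \cap C$ are both subsets of $A$ of size $\omega - 1 = |A| - 1$, there are vertices $p, q \in A$ with $A \cap B = A - p$ and $A \cap C = A - q$. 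The whole statement then reduces to proving $p = q$, because symmetry in $\{A, B, C\}$ (or simply repeating the argument with $B$ in the distinguished role) yields $A \cap B = A \cap C = B \cap C$.

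To prove $p = q$ I would argue by contradiction, assuming $p \neq q$. Let $b$ be the unique vertex of $B - A$ and $c$ the unique vertex of $C - A$, so that $B = (A - p) + b$ and $C = (A - q) + c$ with $b, c \notin A$. Now I split on whether $b = c$. If $b \neq c$, a direct computation gives $B \cap C = A - \{p, q\}$ (the vertices $b, c$ contribute nothing, being outside $A$ and distinct from each other), so $|B \cap C| = \omega - 2$, contradicting $|B \cap C| = \omega - 1$. If $b = c$, I would show that $b$ is adjacent to every vertex of $A$: from $B = (A - p) + b$ being a clique, $b$ is adjacent to all of $A - p$; and since $p \neq q$ we have $p \in A - q$, so from $C = (A - q) + b$ being a clique, $b$ is also adjacent to $p$. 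Hence $A + b$ is a clique of size $\omega + 1$, contradicting the maximality of $\omega$. Both sub-cases being impossible, we conclude $p = q$.

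The step I expect to be the main obstacle is the case $b = c$, not because it is technically hard but because it is the only place where the maximality of the cliques is genuinely used---and it \emph{must} be used, since the conclusion fails for non-maximum $k$-cliques. Everything else is routine set bookkeeping (tracking which of $p, q, b, c$ lie in $A$ and whether they coincide), which I would carry out carefully so that the intersection computation in the case $b \neq c$ is an exact equality rather than a mere containment.
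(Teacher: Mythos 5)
Your proof is correct and takes essentially the same route as the paper: the paper invokes the $n=3$ base case of its token-sliding lemma to conclude that a triangle of $k$-cliques has one of two canonical forms---a common $(k-1)$-core $S$ with $A=S+a$, $B=S+b$, $C=S+c$, or the union-type form $S+a+b$, $S+b+c$, $S+a+c$---and then rules out the latter for $k=\omega(G)$ exactly as you do, by observing it would yield a clique of size $\omega(G)+1$. Your case analysis ($p\neq q$ with $b\neq c$ versus $b=c$) simply re-derives that dichotomy inline instead of citing the earlier lemma, and your bookkeeping in each subcase is sound.
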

\begin{proof}
	We remark that the proof of \cref{lem:clique-in-TSgraph} for the base case $n = 3$ uses only basic properties from the set theory and the assumption that for any pair of adjacent vertices $A, B$ in $\sfTS_k(G)$, $|A \cap B| = k-1$.
	Thus, the same argument in \cref{lem:clique-in-TSgraph} can be applied for vertices $A$, $B$, $C$ of $\sfTJ_k(G)$ ($1 \leq k \leq \omega(G)$) to show that if they form a $K_3$ in $\sfTJ_k(G)$ then $A$, $B$, and $C$ have one of the two forms: either $A = S + a$, $B = S + b$, and $C = S + c$; or $A = S + a + b$, $B = S + b + c$, and $C = S + a + c$, for some clique $S$ of $G$ and pairwise distinct vertices $a, b, c$ not in $S$.
	In the former case, $S = A \cap B \cap C = A \cap B = B \cap C = A \cap C$.
	In the latter case, one can verify that the vertices $a, b, c$ form a triangle in $G$, and therefore $S + a + b + c$ is a clique in $G$ of size $|A| + 1$. When $|A| = \omega(G)$, this is a contradiction.
	Thus, our proof is complete.
\end{proof}

Using \cref{lem:max-clique-intersect-TJ}, we can prove the following proposition.
A \textit{diamond} $K_4 - e$ is a graph obtained from $K_4$ by removing exactly one edge.
\begin{proposition}
\label{prop:maximal_TJ_is_diamond_free}
	For any graph $G$, the graph $\sfTJ_{\omega(G)}(G)$ does not contain any diamond as an induced subgraph.
\end{proposition}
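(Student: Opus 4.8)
The plan is to argue by contradiction. Suppose $\sfTJ_{\omega(G)}(G)$ contains an induced diamond on four $\omega(G)$-cliques $A, B, C, D$, and label them so that $\{C, D\}$ is the unique non-edge; thus every pair except $\{C, D\}$ is adjacent in $\sfTJ_{\omega(G)}(G)$. The diamond then contains exactly two triangles, namely $\{A, B, C\}$ and $\{A, B, D\}$: any triangle must avoid the non-edge $CD$ and hence omit one of $C, D$, and the two surviving triangles both contain the edge $AB$. This shared edge is the crux of the whole argument.

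First I would apply \cref{lem:max-clique-intersect-TJ} to each of the two triangles. From $\{A, B, C\}$ I get $A \cap B = B \cap C = A \cap C$, and from $\{A, B, D\}$ I get $A \cap B = B \cap D = A \cap D$. Since both conclusions share the common term $A \cap B$, setting $S := A \cap B$ yields $A \cap C = B \cap C = A \cap D = B \cap D = S$. Because $A$ and $B$ are adjacent $\omega(G)$-cliques in $\sfTJ_{\omega(G)}(G)$, they differ in exactly one token, so $|S| = |A \cap B| = \omega(G) - 1$.

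Next I would read off the structural form of the four cliques. Each of $A, B, C, D$ is an $\omega(G)$-clique containing the set $S$ of size $\omega(G) - 1$, so each is $S$ together with a single extra vertex; write $A = S + a$, $B = S + b$, $C = S + c$, $D = S + d$. The equalities $A \cap C = A \cap D = S$ together with the fact that $A, B, C, D$ are four distinct cliques force $a, b, c, d$ to be pairwise distinct and to lie outside $S$. The contradiction is then immediate: from $C = S + c$ and $D = S + d$ with $c \neq d$ we get $C - D = \{c\}$ and $D - C = \{d\}$, so $|C \Delta D| = 2$, and since $C$ and $D$ are $\omega(G)$-cliques of the same size this means $C$ and $D$ are adjacent in $\sfTJ_{\omega(G)}(G)$, contradicting the choice of $CD$ as the non-edge.

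The argument is short because \cref{lem:max-clique-intersect-TJ} does the heavy lifting, and I do not expect any genuine obstacle. The one point that needs care is the bookkeeping that synchronizes the two triangles: it is precisely the shared edge $AB$ that makes both applications of the lemma produce the \emph{same} common set $S$, which in turn pins $C$ and $D$ to the identical core $S$ and renders them adjacent. I would also verify explicitly that $a, b, c, d$ are distinct (so that the four cliques are genuinely distinct) and that the diamond has exactly the two triangles identified above.
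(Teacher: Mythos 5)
Your proof is correct and follows essentially the same route as the paper: both apply \cref{lem:max-clique-intersect-TJ} to the two triangles of the diamond, use the shared edge to identify a common core $S$ of size $\omega(G)-1$ contained in all four cliques, and contradict the non-adjacency of the remaining pair. The only cosmetic difference is that you exhibit the forced $\sfTJ$-adjacency $C = S + c$, $D = S + d$ explicitly, while the paper phrases the same contradiction as the cardinality inequality $|B \cap C| \leq |A \cap D| < \omega(G) - 1$.
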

\begin{proof}
	Suppose to the contrary that $\sfTJ_{\omega(G)}(G)$ contains a diamond graph $H$ as an induced subgraph.
	Let $A, B, C, D$ be vertices of $H$ and assume w.l.o.g that $A$ and $D$ are not adjacent.
	From \cref{lem:max-clique-intersect-TJ}, since $A, B, C$ form a triangle in $\sfTJ_{\omega(G)}(G)$, we have $A \cap B = B \cap C = A \cap C$.
	Similarly, we have $B \cap C = C \cap D = B \cap D$.
	The former equation gives us $B \cap C \subseteq A$ and the latter $B \cap C \subseteq D$.
	Thus, $B \cap C \subseteq A \cap D$ and therefore $|B \cap C| \leq |A \cap D|$.
	On the other hand, since $B, C$ are adjacent and $A, D$ are not, we have $|B \cap C| = \omega(G) - 1 > |A \cap D|$, a contradiction.
\end{proof}

\begin{proposition}\label{prop:Siu-TJwG}
	Let $G$ be any graph and let $k = \omega(G)$.
	Let $U$ be a vertex of $\sfTJ_{k}(G)$.
	One can partition the set $N_{\sfTJ_{k}(G)}(U)$ of neighbors of $U$ in $\sfTJ_{k}(G)$ into $k$ disjoint (possibly empty) subsets $S_1(U), S_2(U), \dots, S_k(U)$ such that each $S_i(U)$ ($1 \leq i \leq k$), if it is not empty, is a clique of $\sfTJ_k(G)$.
	Moreover, there is no edge in $\sfTJ_k(G)$ joining a vertex of $S_i(U)$ and that of $S_j(U)$, for distinct pair $i, j \in \{1, \dots, k\}$.
\end{proposition}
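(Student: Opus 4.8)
\section*{Proof proposal for \texorpdfstring{\cref{prop:Siu-TJwG}}{the proposition}}

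The plan is to define the partition explicitly by recording \emph{which} vertex of $U$ is removed by each neighbor, and then to read off the two required properties from simple intersection-size counts. Write $U = \{u_1, \dots, u_k\}$ with $k = \omega(G)$. Every neighbor $W$ of $U$ in $\sfTJ_{k}(G)$ is a $k$-clique with $|U \cap W| = k-1$, so $U - W$ is a single vertex $u_i$ and $W - U$ is a single vertex $v \notin U$; thus $W = U - u_i + v$. For $1 \leq i \leq k$, I would set
$$S_i(U) = \{\, W \in N_{\sfTJ_{k}(G)}(U) : U - W = \{u_i\} \,\}.$$
Since the single vertex $U - W$ is determined by $W$, these sets are pairwise disjoint and their union is $N_{\sfTJ_{k}(G)}(U)$, so they form a partition into $k$ (possibly empty) parts.

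Next I would verify that each nonempty $S_i(U)$ is a clique of $\sfTJ_{k}(G)$. Two distinct members $W = U - u_i + v$ and $W' = U - u_i + v'$ both contain $U - u_i$, a set of size $k-1$; since $W \neq W'$ forces $v \neq v'$, we get $|W \cap W'| = k-1$, so $W$ and $W'$ are adjacent under $\sfTJ$. Hence $S_i(U)$ induces a clique.

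The main work---and the step I expect to be the crux---is showing that no edge joins $S_i(U)$ and $S_j(U)$ for $i \neq j$. Take $W = U - u_i + v \in S_i(U)$ and $W' = U - u_j + v' \in S_j(U)$. The key observation is that maximality $k = \omega(G)$ forces $v \neq v'$: since $W$ is a clique, $v$ is adjacent to every vertex of $U - u_i$, and since $W'$ is a clique, $v'$ is adjacent to every vertex of $U - u_j$; if $v = v'$ then, as $i \neq j$, this vertex would be adjacent to all of $(U - u_i) \cup (U - u_j) = U$, making $U + v$ a clique of size $k+1$, a contradiction. With $v \neq v'$ and $v, v' \notin U$, the intersection $W \cap W'$ consists precisely of $U - u_i - u_j$, which has size $k-2$. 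Therefore $|W \cap W'| = k - 2 \neq k-1$, so $W$ and $W'$ are non-adjacent in $\sfTJ_{k}(G)$, as required.

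I expect the only subtlety to be this cross-edge case: one must rule out the possibility that the two added vertices coincide, and it is exactly the maximality of $U$ (an assumption unavailable for general $k < \omega(G)$) that forbids this and pins the intersection size at $k-2$ rather than $k-1$. Everything else reduces to the adjacency characterization $|U \cap W| = k-1$ and elementary set bookkeeping.
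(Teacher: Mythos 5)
Your proposal is correct, and its partition is the same as the paper's: indexing neighbors by the removed vertex $u_i$ is equivalent to the paper's indexing by the size-$(k-1)$ intersection $U \cap W = U - u_i$, and your clique argument for each part (two members share $U - u_i$, hence intersect in exactly $k-1$ vertices) is the paper's first claim verbatim. Where you diverge is the cross-edge step. The paper disposes of it by citing \cref{lem:max-clique-intersect-TJ}: if $V \in S_i(U)$ and $W \in S_j(U)$ with $i \neq j$ were adjacent, then $U, V, W$ would form a triangle in $\sfTJ_k(G)$, forcing $U \cap V = U \cap W$, a contradiction. You instead argue directly: writing $W = U - u_i + v$ and $W' = U - u_j + v'$, maximality $k = \omega(G)$ rules out $v = v'$ (else $v$ is adjacent to $(U - u_i) \cup (U - u_j) = U$ and $U + v$ is a $(k+1)$-clique), whence $W \cap W' = U - u_i - u_j$ has size $k-2$ and the pair is non-adjacent. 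Your computation is sound, and it is worth noting that the $(k+1)$-clique contradiction you produce is exactly the mechanism hidden inside the paper's lemma (there it appears as $S + a + b + c$ being a clique of size $|A|+1$). So the two routes are mathematically the same obstruction packaged differently: the paper's version buys reuse of an already-established triangle lemma (which it needs elsewhere, e.g.\ for \cref{prop:maximal_TJ_is_diamond_free}), while yours buys a self-contained proof of this proposition that never mentions triangles and makes explicit, as you correctly observe, precisely where the hypothesis $k = \omega(G)$ is used and why the statement fails to generalize to $k < \omega(G)$.
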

\begin{proof}
	For two neighbors $V, W$ of $U$ in $\sfTJ_k(G)$, we show that if $U \cap V = U \cap W$ then $V$ and $W$ are adjacent in $\sfTJ_k(G)$.
	In this case, since $U \cap V$ is a subset of both $V$ and $W$, we have $U \cap V \subseteq V \cap W$.
	Hence, $|V \cap W| \geq |U \cap V| = k - 1$.
	Moreover, $|V \cap W| < |V| = |W| = k$.
	Therefore, $|V \cap W| = k - 1$, which implies that $V$ and $W$ are adjacent in $\sfTJ_k(G)$.
	
	Initially, for each $i \in \{1, \dots, k\}$, we set $S_i(U) = \emptyset$.
	Let $A_1, A_2, \dots, A_k$ be all size-$(k-1)$ subsets of $U$.
	For each $V \in N_{\sfTJ_{k}(G)}(U)$, if $U \cap V = A_i$ for some $i \in \{1, \dots, k\}$, we add $V$ to $S_i(U)$.
	Using the above claim, one can verify that if $S_i(U)$ ($1 \leq i \leq k$) is non-empty then any pair $V, W \in S_i(U)$ are adjacent and therefore $S_i(U)$ is a clique of $\sfTJ_k(G)$.
	
	Finally, if $V \in S_i(U)$ and $W \in S_j(U)$ for distinct pair $i, j \in \{1, \dots, k\}$, we have $U \cap V \neq U \cap W$.
	By \cref{lem:max-clique-intersect-TJ}, $U, V, W$ does not form a triangle in $\sfTJ_k(G)$, and thus there is no edge joining $V$ and $W$.
\end{proof}

Next, we prove an interesting relationship between the two graphs $\sfTJ_{\omega(G)}(G)$ and $\sfTS_{\omega(G) - 1}(G)$, for any graph $G$.
The following lemma is useful.

\begin{lemma}\label{lem:TSk-1-vs-TJk}
	Let $G$ be a graph and $k$ be an integer satisfying $2 \leq k \leq \omega(G)$.
	Let $A, B \in V(\sfTS_{k-1}(G))$.
	Then, $AB \in E(\sfTS_{k-1}(G))$ if and only if $A \cup B \in V(\sfTJ_{k}(G))$.
\end{lemma}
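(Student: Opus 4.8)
The plan is to prove both implications by elementary set-counting, the key tool being the inclusion--exclusion identity relating $|A \cup B|$ and $|A \cap B|$ to the fixed sizes $|A| = |B| = k-1$. The whole statement reduces to the observation that, once $S := A \cap B$ is pinned down, the only data that can vary is whether the two ``outer'' vertices of $A$ and $B$ are adjacent, and this single edge is exactly what both the $\sfTS$-adjacency and the cliqueness of $A \cup B$ hinge on.

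For the forward direction I would assume $AB \in E(\sfTS_{k-1}(G))$. By the definition of adjacency under $\sfTS$, there exist $u, v \in V(G)$ with $A - B = \{u\}$, $B - A = \{v\}$, and $uv \in E(G)$; writing $S = A \cap B$ (so $|S| = k-2$), we have $A = S + u$ and $B = S + v$ with $u \neq v$. Then $A \cup B = S + u + v$ has size $k$, and it remains to check that it is a clique: every pair inside $S + u$ is an edge because $A$ is a clique, every pair inside $S + v$ is an edge because $B$ is a clique, and the only remaining pair $\{u,v\}$ is an edge by hypothesis. Hence $A \cup B$ is a $k$-clique, i.e.\ $A \cup B \in V(\sfTJ_k(G))$.

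For the backward direction I would assume $A \cup B \in V(\sfTJ_k(G))$, i.e.\ $A \cup B$ is a clique of size exactly $k$. From inclusion--exclusion, $|A \cap B| = |A| + |B| - |A \cup B| = (k-1) + (k-1) - k = k-2$, so $|A - B| = |B - A| = 1$; write $A - B = \{u\}$ and $B - A = \{v\}$, noting $u \neq v$ since $u \notin B$ and $v \notin A$. As $u, v \in A \cup B$ and $A \cup B$ is a clique, we get $uv \in E(G)$, which is precisely the adjacency condition for $\sfTS$; hence $AB \in E(\sfTS_{k-1}(G))$.

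I do not anticipate a serious obstacle: the argument is essentially counting bookkeeping. The one point needing a little care is verifying, in each direction, that cliqueness of $A$ and $B$ already forces every pair other than $\{u,v\}$ to be an edge, so that the single edge $uv$ is the only extra requirement linking the two notions. The hypothesis $2 \leq k \leq \omega(G)$ merely ensures the two vertex sets are non-degenerate and that $k$-cliques can exist; the case $A = B$ makes both sides of the equivalence false and needs no separate treatment.
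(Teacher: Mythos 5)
Your proposal is correct and matches the paper's own argument essentially step for step: both directions rest on the inclusion--exclusion count $|A \cap B| = |A| + |B| - |A \cup B| = k-2$, with the forward direction verifying cliqueness of $A \cup B = (A \cap B) + u + v$ from the cliqueness of $A$ and $B$ plus the edge $uv$, and the backward direction extracting $u$, $v$ and reading $uv \in E(G)$ off the clique $A \cup B$. Your added remark that the case $A = B$ falsifies both sides is a harmless bonus the paper leaves implicit.
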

\begin{proof}
	\begin{itemize}
		\item[($\Rightarrow$)] Suppose that $AB \in E(\sfTS_{k-1}(G))$.
		We show that $A \cup B \in V(\sfTJ_{k}(G))$.
		Since $A, B \in V(\sfTS_{k-1}(G))$, we have $|A| = |B| = k - 1$ and $|A \cap B| = k - 2$.
		Thus, $|A \cup B| = |A| + |B| - |A \cap B| = k$.
		To show that $A \cup B \in V(\sfTJ_{k}(G))$, it remains to verify that $A \cup B$ is a clique of $G$.
		Since $AB \in E(\sfTS_{k-1}(G))$, it follows that there exist $u, v \in V(G)$ such that $A \setminus B = \{u\}$, $B \setminus A = \{v\}$, and $uv \in E(G)$.
		Since $u \in A$, it is adjacent to every vertex of $A \cap B$.
		Similarly, so is $v \in B$.
		Moreover, $u$ and $v$ are adjacent.
		Thus, $A \cup B = (A \cap B) + u + v$ is indeed a clique of $G$.

		\item[($\Leftarrow$)] Suppose that $A \cup B \in V(\sfTJ_k(G))$.
		We show that $AB \in E(\sfTS_{k-1}(G))$, that is, there exist $u, v \in V(G)$ such that $A \setminus B = \{u\}$, $B \setminus A = \{v\}$, and $uv \in E(G)$.
		Observe that $|A \cap B| =  |A| + |B| - |A \cup B| = k - 2$.
		Additionally, since $|A| = |B| = k - 1$, it follows that $|A \setminus B| = |B \setminus A| = |A| - |A \cap B| = |B| - |A \cap B| = 1$.
		As a result, there exist $u, v \in V(G)$ such that $A \setminus B = \{u\}$ and $B \setminus A = \{v\}$.
		Since $u, v \in A \cup B$ and $A \cup B \in V(\sfTJ_k(G))$, we have $uv \in E(G)$.
	\end{itemize}
\end{proof}

We remark that for two graphs $G$ and $T$ satisfying $T \cong \sfTJ_k(G)$, one can indeed label vertices of $T$ by $k$-cliques of $G$.
We call such a labelling a \textit{set label} of $T$.
Given a graph $T \cong \sfTJ_k(G)$ and one of its set labels, using \cref{lem:TSk-1-vs-TJk}, one can indeed construct a graph $H$ such that $\sfTS_{k-1}(G) \cong H + cK_1$ for some integers $k \geq 2$ and $c \geq 0$: vertices of $H$ are exactly the size-$(k-1)$ subsets of each $k$-clique of $G$ (i.e., vertex in $\sfTJ_k(G)$), and two vertices of $H$ are adjacent if their union is a $k$-clique of $G$ (\cref{lem:TSk-1-vs-TJk}).

%%%%%%%%%%%%%%%%%%%%%%%%%%%%%%%%%%%%%%

We now consider a more generalized case when $T$ is given without any set label.
Now for an integer $k$ and graph $G$, we call $G$ a \textit{$k$-good} graph if it satisfies the following \textit{$k$-good conditions}: for every vertex $u$ of $G$, the set $N_G(u)$ can be partitioned into $k$ disjoint subsets (possibly empty) $S_1(u), S_2(u), \dots, S_k(u)$ such that each $S_i(u)$ ($1 \leq i \leq k$), if it is not empty, is a clique in $G$ and there is no edge joining a vertex of $S_i(u)$ and that of $S_j(u)$, for a distinct pair $i, j \in \{1, \dots, k\}$. 

\begin{proposition}\label{prop:k-good-conds}
For a $k$-good graph $T = (V, E)$, one can find a multiset $\mathbf{Msets}$ of subsets of $V$ such that the following conditions are satisfied.
    \begin{enumerate}[(a)]
        \item For each pair of different sets $X,Y\in \mathbf{Msets}$, $|X\cap Y|<2$.
        \item For each $X\in \mathbf{Msets}$, $X$ is a clique in $T$.
        \item For each $u\in V(T)$, there are exactly $k$ sets $X\in \mathbf{Msets}$ such that $u\in X$.
        \item For each $uv\in E(T)$, there exists exactly one set $X\in \mathbf{Msets}$ such that $u,v\in X$.
    \end{enumerate}
\end{proposition}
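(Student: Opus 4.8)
The plan is to build $\mathbf{Msets}$ from the maximal cliques of $T$, padded by copies of singletons. The heart of the argument is a structural lemma: in a $k$-good graph, every edge lies in a \emph{unique} maximal clique, any two distinct maximal cliques meet in at most one vertex, and every vertex lies in at most $k$ maximal cliques of size at least $2$. Once this is in hand, the construction and the verification of (a)--(d) are short.

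First I would establish the local structure. Fix a vertex $u$ together with its guaranteed partition $N_T(u) = S_1(u) \cup \dots \cup S_k(u)$ into cliques with no edges between distinct parts. For a non-empty part $S_i(u)$, the set $\{u\} \cup S_i(u)$ is a clique, since $u$ is adjacent to all of $N_T(u)$ and $S_i(u)$ is itself a clique; and it is maximal, because any $w$ adjacent to $u$ lies in some part $S_j(u)$, and if $w$ is also adjacent to a vertex of $S_i(u)$ then the ``no edge between parts'' hypothesis forces $j = i$, i.e.\ $w \in S_i(u)$. Conversely, any maximal clique $Q$ with $u \in Q$ satisfies $Q - u \subseteq N_T(u)$, and being a clique it cannot meet two distinct parts, so $Q - u \subseteq S_i(u)$ for some $i$; maximality then forces $Q = \{u\} \cup S_i(u)$. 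Hence the maximal cliques through $u$ are exactly the sets $\{u\} \cup S_i(u)$ over the non-empty parts, and there are at most $k$ of them.

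Next I would deduce uniqueness and the intersection bound. Given an edge $uv$ with $v \in S_i(u)$, every maximal clique $Q$ containing both $u$ and $v$ has $Q - u$ a clique through $v$, so each of its vertices is adjacent to $v$ and therefore lies in $S_i(u)$; thus $Q = \{u\} \cup S_i(u)$ is forced, giving uniqueness. Consequently two distinct maximal cliques cannot share two vertices $u, v$, since both would have to be the unique maximal clique on the edge $uv$; hence any two distinct maximal cliques meet in at most one vertex. Finally I would assemble $\mathbf{Msets}$ by taking every maximal clique of $T$ of size at least $2$ and, for each vertex $u$ lying in exactly $d_u \le k$ such cliques, adding $k - d_u$ copies of the singleton $\{u\}$ (this is nonnegative precisely because of the at-most-$k$ bound above). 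Then (b) holds as maximal cliques and singletons are cliques; (a) holds since distinct members share at most one vertex; (d) is the uniqueness just proved, as singletons contain no edge; and (c) holds by the padding, since each $u$ lies in $d_u$ genuine cliques plus $k - d_u$ singleton copies, totalling $k$.

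The main obstacle is the structural lemma, and specifically the claim that each edge determines a \emph{unique} maximal clique. This is exactly the step where the hypothesis that there is no edge joining distinct parts $S_i(u)$ and $S_j(u)$ is indispensable: it both rules out two different maximal cliques sharing an edge and gives the bound of $k$ maximal cliques per vertex, which in turn is what makes the singleton padding well-defined.
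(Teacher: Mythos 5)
Your proposal is correct and is essentially the paper's own construction: the multiset you build (all maximal cliques of size at least two, each once, plus $k-d_u$ copies of the singleton $\{u\}$ for each vertex $u$) coincides with the paper's $\mathbf{Msets}$ of deduplicated sets $M_i(u)=S_i(u)\cup\{u\}$, and your key lemma that each edge lies in a unique maximal clique is exactly the paper's observation that any two such sets sharing two vertices must coincide. The only difference is presentational: you give the sets an intrinsic description as maximal cliques (independent of the chosen partitions) and handle multiplicities by explicit singleton padding rather than by selective insertion during the construction.
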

\begin{proof}
%     For each vertex $u\in V(T)$, let $k$ sets $S_1(u),S_2(u),\dots,S_k(u)$ be such splitting, we have call $M_i(u)=S_i(u) \cup {u}$.
% We prove that if there exist $M_i(u),M_j(v)$ such that they have 2 common vertices (assume $w,r$) then $M_i(u)=M_j(v)$.
% Because edge $wr$ is in $T$, we assume that $r\in S_1(w)$ and $w\in S_1(r)$. We have  $M_1(w)=M_1(r)$ because they are the set of vertices that are incident with both of $\{w,r\}$ union with set $\{w,r\}$.

% One of $w,r$ is different from $u$ we assume $w \neq u$. All the vertices connected to $w,r$ are inside $M_1(w)$ and hence $M_i(u) \subset M_1(w)$. All the vertex connect to $u,w$ are inside $M_i(u)$ and hence $M_1(w) \subset M_i(u)$. Then we have $M_1(w)=M_i(u)$ and similarly, $M_1(w)=M_j(v)$ or $M_1(r)=M_j(v)$. Then, either way, make $M_i(u)=M_j(v)$.
% We consider the multiset $\mathbf{Msets}$, which is the multiset of all set $M_i(u)$($\forall u\in V(T),1 \leq i \leq k$) after eliminating duplicate copies of sets with at least $2$ elements. Then we will prove that this multiset $\mathbf{Msets}$ satisfies the properties in the lemma.

We now describe how to construct $\mathbf{Msets}$. 
Initially, $\mathbf{Msets} = \emptyset$.
For each $u \in V(T)$, as $T$ is $k$-good, there exist $k$ sets $S_1(u),S_2(u),\dots,S_k(u)$ satisfying the $k$-good conditions.
For each $u \in V(T)$, we define $M_i(u) = S_i(u) \cup \{u\}$.
Now, if either $|M_i(u)| = 1$ or $M_i(u) \notin \mathbf{Msets}$, we add $M_i(u)$ to $\mathbf{Msets}$.
From this construction, note that if $M_i(u)$ is of size $1$ then it has multiplicity at least $1$ in $\mathbf{Msets}$, and otherwise it has multiplicity exactly $1$.

Additionally, observe that for any $u, v \in V(T)$ and any pair $i, j \in \{1, \dots, k\}$, if $|M_i(u) \cap M_j(v)| \geq 2$ then $M_i(u) = M_j(v)$.
To see this, suppose that there exist two distinct vertices $w, r \in M_i(u) \cap M_j(v)$.
Because edge $wr$ is in $T$, we assume that $r\in S_1(w)$ and $w\in S_1(r)$. We have  $M_1(w)=M_1(r)$ because they are the set of vertices that are incident with both of $\{w,r\}$ union with set $\{w,r\}$.

One of $w,r$ is different from $u$ we assume $w \neq u$. All the vertices connected to $w,r$ are inside $M_1(w)$ and hence $M_i(u) \subset M_1(w)$. All the vertex connect to $u,w$ are inside $M_i(u)$ and hence $M_1(w) \subset M_i(u)$. Then we have $M_1(w)=M_i(u)$ and similarly, $M_1(w)=M_j(v)$ or $M_1(r)=M_j(v)$. Then, either way, make $M_i(u)=M_j(v)$.

We are now ready to prove that the constructed set $\mathbf{Msets}$ satisfies the conditions (a)--(d).
\begin{enumerate}[(a)]
    \item For each different set $X,Y\in \mathbf{Msets}$. If $X\cap Y \geq 2$ then $X=Y$ and $|X|\geq 2$ however, every duplicate copy of sets with at least 2 elements has been eliminated, so this cannot happen.
    \item For every $X\in \mathbf{Msets}$, there exists $u\in V(T)$ and $1\leq i\leq k$ such that $X=M_i(u)=S_i(u)+u$. Because $S_i(u)$ is a clique and every element in it is a neighbor of $u$, $M_i(u)$ also forms a clique.
    \item For each $u\in V(T)$, the sets $M_1(u), M_2(u), \dots, M_k(u)$ contain $u$. 
	Since for the sets $M_i(u)$ for $1 \le i \le k$, the number of sets equals $\{u\}$ will still remain in $\mathbf{Msets}$ and the sets with at least 2 elements are pair-wise different, so $\mathbf{Msets}$ have to contain each copy of them. 
	As a result, there are at least $k$ sets in $\mathbf{Msets}$ that contain $u$. 
	Now, for each $X\in \mathbf{Msets}$ that contains $u$, we prove that $X$ can be a copy of $M_i(u)$ for some $1\leq i \leq k$. 
	\begin{itemize}
        \item  If $X=\{u\}$, then $X$ can only be obtained by $M_i(u)$.
        \item Otherwise, $X$ has at least 2 elements. Let $X=M_j(v)$. Here we consider the case $v\neq u$, then $v,u$ are incident in $T$, which means there exists $S_i(u)$ contain $v$. This indicates that $\{u,v\}\subset [M_i(u)\cap M_j(v)]$ which means that $M_i(u)=M_j(v)$ and hence $X$ is a copy of $M_i(u)$.
	\end{itemize}
    Hence, for all $u\in V(T)$, there are exactly $k$ sets $X\in U$ that contain $u$.
    \item For each $uv\in E(T)$, there exist $i$ such that $v\in S_i(u)$; hence, $M_i(u)$ contains both $u,v$, which means there is $X\in \mathbf{Msets}$ such that $u,v\in X$.
\end{enumerate} 
\end{proof}

From \cref{prop:Siu-TJwG}, $\sfTJ_{\omega(G)}$ is $\omega(G)$-good.
Therefore, the following corollary directly follows from \cref{prop:k-good-conds}.
\begin{corollary}
	Let $G$ be any graph and let $k = \omega(G)$.
    There exists a multiset $\mathbf{Msets}(\sfTJ_k(G))$ containing subsets of $V(\sfTJ_k(G))$ such that the following conditions are satisfied.
    \begin{enumerate}[(a)]
        \item For each different sets $X,Y\in \mathbf{Msets}(\sfTJ_k(G))$, $|X\cap Y|<2$
        \item $X$ form a clique in $TJ_k(G)$, $(\forall X\in \mathbf{Msets}(\sfTJ_k(G))) $ 
        \item For each $u\in V(\sfTJ_k(G))$, there are exactly $k$ sets $X\in \mathbf{Msets}(\sfTJ_k(G))$ such that $u\in X$
        \item For each $uv\in E(\sfTJ_k(G))$, there exists one set $X\in \mathbf{Msets}(\sfTJ_k(G))$ such that $u,v\in X$
    \end{enumerate}
\end{corollary}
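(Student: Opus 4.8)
The plan is to apply \cref{prop:k-good-conds} directly, taking the graph $T$ of that proposition to be $\sfTJ_{\omega(G)}(G)$ and instantiating its integer parameter as $k = \omega(G)$. The sole hypothesis of \cref{prop:k-good-conds} is that $T$ is $k$-good, so the entire argument reduces to verifying that $\sfTJ_{\omega(G)}(G)$ is $\omega(G)$-good. But this is precisely the content of \cref{prop:Siu-TJwG}: for every vertex $U$ of $\sfTJ_{\omega(G)}(G)$, the neighborhood $N_{\sfTJ_{\omega(G)}(G)}(U)$ admits a partition into $k$ (possibly empty) parts $S_1(U), \dots, S_k(U)$, each of which is a clique of $\sfTJ_{\omega(G)}(G)$ whenever nonempty, with no edge joining a vertex of $S_i(U)$ to a vertex of $S_j(U)$ for distinct $i, j$. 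Comparing this word for word with the definition of a $k$-good graph (applied with $\sfTJ_{\omega(G)}(G)$ playing the role of the generic graph $G$ in that definition), I see that the two statements coincide verbatim.

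Concretely, I would first invoke \cref{prop:Siu-TJwG} to conclude that $\sfTJ_{\omega(G)}(G)$ is $\omega(G)$-good, and then feed this graph into \cref{prop:k-good-conds} to obtain a multiset $\mathbf{Msets}$ of subsets of $V(\sfTJ_{\omega(G)}(G))$ satisfying properties (a)--(d) of that proposition. Relabelling $\mathbf{Msets}$ as $\mathbf{Msets}(\sfTJ_k(G))$ yields exactly the multiset demanded by the corollary, and the four conditions transfer unchanged since they are stated in identical form (indeed, condition (d) of \cref{prop:k-good-conds} furnishes the \emph{exactly one} set required, which is stronger than the \emph{one} set asserted in the corollary).

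There is essentially no mathematical obstacle here, as the substance of the claim has already been established in the two cited results; the statement is a genuine corollary in the literal sense. The only point deserving care, and the thing I would double-check, is the bookkeeping of roles in the substitution: the ground set of the subsets collected in $\mathbf{Msets}(\sfTJ_k(G))$ is $V(\sfTJ_k(G))$, whose elements are $\omega(G)$-cliques of $G$, so each member of a set in the multiset is itself such a clique; and the integer $k$ appearing in the $k$-good condition must be read as $\omega(G)$, matching the number of parts supplied by \cref{prop:Siu-TJwG}. Once these identifications are fixed, the corollary follows immediately.
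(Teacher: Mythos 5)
Your proposal is correct and matches the paper exactly: the paper derives this corollary by noting that \cref{prop:Siu-TJwG} shows $\sfTJ_{\omega(G)}(G)$ is $\omega(G)$-good and then applying \cref{prop:k-good-conds} verbatim. Your extra remark that condition (d) of \cref{prop:k-good-conds} (\emph{exactly} one set) is stronger than the corollary's phrasing is a fair observation but changes nothing in the deduction.
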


Next, we will examine the relation between graphs $\sfTS_{k-1}(G)$ and $\sfTJ_k(G)$ ($k = \omega(G)$). 
For each set $v$ in $V(\sfTS_{k-1}(G))$  For each set $v$ in $V(\sfTS_{k-1}(G))$  let $\mathbf{Expand}(v)$ be the set of $u$ in $V_{\sfTJ_k(G)}$ such that $v\subset u$, then $\mathbf{Expand}(v)$ is a clique in $\sfTJ_{k}(G)$ (since each set pair in $\mathbf{Expand}(v)$ will have intersection $v$).
\begin{proposition}
\label{prop:edge_in_TS_k-1}
    For each $w,r$ in $V(\sfTS_{k-1}(G))$: $\mathbf{Expand}(w),\mathbf{Expand}(r)$ has a common element if and only if $wr$ is in $E(\sfTS_{k-1}(G))$.
\end{proposition}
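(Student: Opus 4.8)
The plan is to deduce the statement directly from \cref{lem:TSk-1-vs-TJk}, which already characterizes exactly when the union of two $(k-1)$-cliques is a $k$-clique in terms of $\sfTS$-adjacency. Throughout I assume $w \neq r$, since talking about the potential edge $wr$ presupposes distinct endpoints (for $w = r$ the intersection $\mathbf{Expand}(w) \cap \mathbf{Expand}(r)$ is trivially the whole set, so the biconditional is not intended for that case). Since $k = \omega(G) \geq 2$, the hypothesis $2 \leq k \leq \omega(G)$ of \cref{lem:TSk-1-vs-TJk} is satisfied, so I am free to invoke it.

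The crux is a short cardinality argument that pins down any common element of the two expansions. First I would show that if $u \in \mathbf{Expand}(w) \cap \mathbf{Expand}(r)$, then necessarily $u = w \cup r$. Indeed, $u$ is a $k$-clique with $w \subset u$ and $r \subset u$, so $w \cup r \subseteq u$. Because $w$ and $r$ are distinct $(k-1)$-element sets, $|w \cap r| \leq k-2$, and hence $|w \cup r| = |w| + |r| - |w \cap r| \geq k$. Combined with $|u| = k$ and $w \cup r \subseteq u$, this forces $w \cup r = u$. Consequently, $\mathbf{Expand}(w) \cap \mathbf{Expand}(r) \neq \emptyset$ holds if and only if $w \cup r$ is itself a $k$-clique of $G$, i.e.\ a vertex of $\sfTJ_k(G)$; and in that case the common element is unique and equal to $w \cup r$.

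With this reduction, the two implications become immediate. For the forward direction, a nonempty intersection produces the vertex $w \cup r \in V(\sfTJ_k(G))$, whereupon \cref{lem:TSk-1-vs-TJk} yields $wr \in E(\sfTS_{k-1}(G))$. For the converse, if $wr \in E(\sfTS_{k-1}(G))$ then \cref{lem:TSk-1-vs-TJk} guarantees $w \cup r \in V(\sfTJ_k(G))$, so $u = w \cup r$ is a $k$-clique containing both $w$ and $r$, witnessing that $u \in \mathbf{Expand}(w) \cap \mathbf{Expand}(r)$.

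The only genuine subtlety, and the step I would write most carefully, is the bound $|w \cup r| \geq k$: it rests on $w$ and $r$ being \emph{distinct} sets of equal size $k-1$, and it is precisely what rules out a shared $k$-clique strictly larger than $w \cup r$ and reduces the entire question to the single candidate $w \cup r$. Everything past that point is a direct application of the equivalence already established in \cref{lem:TSk-1-vs-TJk}, so I expect no further obstacles.
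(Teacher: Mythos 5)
Your proof is correct, but it is organized differently from the paper's. The paper proves \cref{prop:edge_in_TS_k-1} from scratch, without invoking \cref{lem:TSk-1-vs-TJk}: for the forward direction it takes a common element $x$, argues that $|w \cup r| = k$ must hold (the alternative $|w \cup r| = k-1$ would force $w = r$), writes $w = (w\cap r)+a$ and $r = (w\cap r)+b$, and concludes $ab \in E(G)$ because $a,b$ both lie in the $k$-clique $x$; for the converse it exhibits $(w\cap r)+a+b$ as an explicit common element. In effect the paper re-derives inside the proposition the same adjacency characterization it had already established in \cref{lem:TSk-1-vs-TJk}. You instead factor the argument through that lemma: your key step---that any common element of $\mathbf{Expand}(w)$ and $\mathbf{Expand}(r)$ must equal $w \cup r$, via the bound $|w \cup r| \geq k$ for distinct $(k-1)$-sets---reduces the proposition to the equivalence ``the intersection is nonempty iff $w \cup r \in V(\sfTJ_k(G))$,'' after which the lemma closes both directions at once. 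Your route is leaner (no duplicated computation) and yields slightly more than the paper states, namely that the common element is unique and equal to $w \cup r$, a fact the paper later relies on implicitly (e.g., in the proof of \cref{prop:Msets_equal}). Your explicit restriction to $w \neq r$ matches the paper's intent: its own proof also treats only distinct $w, r$, and the biconditional genuinely fails for $w = r$ whenever $w$ extends to a $k$-clique. The residual hypothesis $2 \leq k$ needed to invoke the lemma is harmless, since for $\omega(G) = 1$ the statement is vacuous for distinct vertices.
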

\begin{proof}
\begin{itemize}
    \item Firstly, we prove that for each $w\neq r$ in $V(\sfTS_{k-1}(G))$ if $\mathbf{Expand}(w),\mathbf{Expand}(r)$ has a common element, then $wr$ is in $E(\sfTS_{k-1}(G))$. 
	Let $x$ be any common element of $\mathbf{Expand}(w), \mathbf{Expand}(r)$; then $x$ has $k$ elements, $w,r$ are its subsets with $k-1$ elements. 
	Because $(w\cup r)\subset x$ and $w\cup r$ has at least $k-1$ (because $w,r$ has $k-1$ elements). 
	Then $|w\cup r|=k$ or $|w\cup r|=k-1$. 
	For the latter case, $|w\cup r|=|w|=|r|$, which mean $w=w\cup r=r$ and this contradicts our hypothesis that $w\neq r$. 
	Hence, the case $|w\cup r|=k=|x|$ must occur, and hence $|w\cap r|=|w|+|r|-|w\cup r|=k-2$. Let $w=(w\cap r)+a$ and $r=(w\cap r)+b$. 
	Then, because $a,b\in x$ (which is a clique of size $k$ in $G$) we have $ab\in G$, which indicates $wr\in TS_{k-1}(G)$.
    \item Secondly, we prove that if $wr\in E(TS_{k-1}(G))$ then $\mathbf{Expand}(w),\mathbf{Expand}(r)$ has a common element. If $wr \in E(TS_{k-1}(G))$ then $w=(w\cap r)+a,r=(w\cap r)+b$ and $ab\in G$. Hence, $(w\cap r)+a+b$ should also form a clique of size $k$ in $G$, which is the common element of $\mathbf{Expand}(w),\mathbf{Expand}(r)$.
\end{itemize}
\end{proof}

\begin{corollary}
\label{cor:independent_points_in_TS}
The vertices $w\in V(TS_{k-1}(G))$ such that $\mathbf{Expand}(w)=\emptyset$ are isolated vertices. %After review 3
\end{corollary}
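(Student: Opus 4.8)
The plan is to obtain this statement as an immediate contrapositive of \cref{prop:edge_in_TS_k-1}. First I would read $T(w)$ as the set $\mathbf{Expand}(w)$ introduced just above the proposition, namely the collection of vertices $u \in V(\sfTJ_k(G))$ (equivalently, $\omega(G)$-cliques of $G$, where $k = \omega(G)$) with $w \subset u$. Thus the hypothesis $\mathbf{Expand}(w) = \emptyset$ says precisely that the $(k-1)$-clique $w$ cannot be extended to any maximum clique of $G$.

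From here I would argue by contraposition. Suppose $w$ is \emph{not} isolated in $\sfTS_{k-1}(G)$; then there is some $r \in V(\sfTS_{k-1}(G))$ with $wr \in E(\sfTS_{k-1}(G))$. Applying the ``only if'' direction of \cref{prop:edge_in_TS_k-1} to this edge yields that $\mathbf{Expand}(w)$ and $\mathbf{Expand}(r)$ have a common element; in particular $\mathbf{Expand}(w) \neq \emptyset$. Contrapositively, $\mathbf{Expand}(w) = \emptyset$ forces $w$ to have no neighbour in $\sfTS_{k-1}(G)$, i.e.\ to be isolated, which is exactly the claim.

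I do not expect a genuine obstacle here, since the corollary is a one-line consequence of the preceding proposition: the whole content has already been packaged into the equivalence ``$\mathbf{Expand}(w)$ and $\mathbf{Expand}(r)$ share an element if and only if $wr \in E(\sfTS_{k-1}(G))$''. The only point requiring mild care is the notational identification $T(w) = \mathbf{Expand}(w)$, after which the result follows purely formally from the fact that the empty set shares no element with any set. If desired, I would append the intuitive remark that a $(k-1)$-clique which is already maximal in $G$ admits no valid $\sfTS$-move, since by \cref{prop:edge_in_TS_k-1} every such move would have to route through a common $\omega(G)$-clique containing both endpoints.
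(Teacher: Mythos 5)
Your proposal is correct and is exactly the intended argument: the paper states this corollary without proof precisely because it is the immediate contrapositive of \cref{prop:edge_in_TS_k-1} (with $T(w)$ being a notational slip for $\mathbf{Expand}(w)$), since any edge $wr$ would force $\mathbf{Expand}(w)$ and $\mathbf{Expand}(r)$ to share an element, hence $\mathbf{Expand}(w)\neq\emptyset$. Your identification of the notation and the one-line contrapositive match the paper's reasoning; nothing is missing.
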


\begin{proposition}
\label{prop:Msets_equal}
     Given $k=\omega(G)$. Let $\mathbf{Msets}$ be the multiset of non-empty sets $\mathbf{Expand}(w)$, $(\forall w\in V(TS_{k-1}(G)$) then $Msets=\mathbf{Msets}(TJ_k(G))$.
\end{proposition}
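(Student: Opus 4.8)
The plan is to show that the construction underlying $\mathbf{Msets}(\sfTJ_k(G))$ produces exactly the non-empty sets $\mathbf{Expand}(w)$, with identical multiplicities. Recall from \cref{prop:Siu-TJwG} that for a vertex $U$ of $\sfTJ_k(G)$ (a $k$-clique of $G$) the canonical partition groups the neighbors $V$ of $U$ according to the size-$(k-1)$ set $U \cap V \subseteq U$; writing $A_1, \dots, A_k$ for the $(k-1)$-subsets of $U$ and $S_i(U) = \{V \in N_{\sfTJ_k(G)}(U) : U \cap V = A_i\}$, the construction in \cref{prop:k-good-conds} forms $M_i(U) = S_i(U) + U$. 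The heart of the argument is the identity $M_i(U) = \mathbf{Expand}(A_i)$, from which the whole proposition will follow.

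To prove this identity I would argue by double inclusion. Each $V \in M_i(U)$ is either $U$ itself or a neighbor with $U \cap V = A_i$; in both cases $A_i \subseteq V$ and $V$ is a $k$-clique, so $V \in \mathbf{Expand}(A_i)$. Conversely, if $V \in \mathbf{Expand}(A_i)$ then $V$ is a $k$-clique containing $A_i$; if $V = U$ it lies in $M_i(U)$, and if $V \neq U$ then $A_i \subseteq U \cap V$ together with $|U \cap V| \leq k-1 = |A_i|$ forces $U \cap V = A_i$, so $V$ is a neighbor of $U$ belonging to $S_i(U)$. Since each $A_i$ is a $(k-1)$-clique contained in the $k$-clique $U$, every set generated by the construction is a non-empty $\mathbf{Expand}(A_i)$; conversely, any non-empty $\mathbf{Expand}(w)$ has $w$ contained in some $k$-clique $U$, whence $w = A_i$ for some $i$ and $\mathbf{Expand}(w) = M_i(U)$ indeed appears in the construction. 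The isolated vertices of \cref{cor:independent_points_in_TS}, for which $\mathbf{Expand}(w) = \emptyset$, contribute nothing on either side.

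It then remains to match multiplicities, which I expect to be the main obstacle, since $\mathbf{Msets}(\sfTJ_k(G))$ deduplicates sets of size at least $2$ but retains repeated singletons. For a set of size at least $2$: if $|\mathbf{Expand}(w)| \geq 2$, then any two distinct $u_1, u_2 \in \mathbf{Expand}(w)$ satisfy $w \subseteq u_1 \cap u_2$ with $|u_1 \cap u_2| \leq k-1 = |w|$, so $w = u_1 \cap u_2$ is recovered from the set $\mathbf{Expand}(w)$ alone; hence such a set occurs with multiplicity exactly $1$ in $\mathbf{Msets}$, matching the single deduplicated copy in $\mathbf{Msets}(\sfTJ_k(G))$. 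For a singleton $\{u\}$: in $\mathbf{Msets}$ its multiplicity equals the number of $(k-1)$-subsets $w$ of $u$ with $\mathbf{Expand}(w) = \{u\}$, that is, the $(k-1)$-subsets of $u$ lying in no other $k$-clique; in $\mathbf{Msets}(\sfTJ_k(G))$ the singleton $\{u\}$ can only arise as $M_i(u) = \{u\}$ (any $M_j(v) = \{u\}$ forces $v = u$), which happens precisely when $S_i(u) = \emptyset$, i.e.\ for exactly those $(k-1)$-subsets $A_i$ of $u$ that lie in no other $k$-clique---the same count. Combining the set-level correspondence of the previous paragraph with this equality of multiplicities yields $\mathbf{Msets} = \mathbf{Msets}(\sfTJ_k(G))$.
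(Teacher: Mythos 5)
Your proposal is correct and follows essentially the same route as the paper: both compare multiplicities case-by-case on the size of the set, using the fact that two distinct $k$-cliques intersect in at most $k-1$ vertices to pin down $w = u_1 \cap u_2$ for sets of size at least $2$, and then matching singleton counts. Your packaging is slightly cleaner---you isolate the identity $M_i(U) = \mathbf{Expand}(A_i)$ up front and count singletons directly as the $(k-1)$-subsets of $u$ lying in no other $k$-clique, whereas the paper obtains the singleton case by subtraction from the fact that exactly $k$ sets of each multiset contain $u$---but the underlying argument is the same.
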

\begin{proof}

    We have to prove that for every subset $x$ of $V(TJ_k(G))$, the number of occurrences of $x$ in $\mathbf{Msets}$ and $\mathbf{Msets}(TJ_k(G))$ equals. 
    \begin{itemize}
        \item If $x=\emptyset$ then the number of occurrences of $x$ in $\mathbf{Msets}$ and $\mathbf{Msets}(TJ_k(G))$ are both 0.
        \item If $x$ has at least 2 elements. Let any 2 distinct elements of $x$ be $u,v$. It suffices for us to prove that $\mathbf{Msets}$ can only contain at most one occurrence of $x$ and $x\in \mathbf{Msets}$ if and only if $x\in \mathbf{Msets}(TJ_k(G))$. 
        \begin{itemize}
            \item Firstly, we prove that $\mathbf{Msets}$ cannot contain 2 occurrences of $x$. Assume we have two occurrences of $x$ in $U$, let $w,r$ be any two different vertices of $TS_{k-1}(G)$ such that $\mathbf{Expand}(w)=\mathbf{Expand}(r)=x$. Because $w,r$ are two different sets with $k-1$ elements then $|w\cup r| \geq k$ however $u,v$ are two different sets with $k$ elements so $|u\cap v|\leq k-1$. This creates a contradiction because $(w\cup r)\subset (u\cap v)$ hence the hypothesis cannot happen and there should be at most one occurrence of $x$ in $U$.
            \item Secondly, we prove that if $x\in \mathbf{Msets} $ then $x\in \mathbf{Msets}(TJ_k(G))$. Assume $x=\mathbf{Expand}(w)$ then we have $w=(u\cap v)$ and hence $uv\in E(TJ_k(G))$. Using this fact, there must exist an index $i$ that $v\in S_i(u)$, and we will prove that $M_i(u)=x$ which make $x\in \mathbf{Msets}(TJ_k(G))$. We know that $M_i(u)=(N_{TJ_k(G)}(u)\cap N_{TJ_k(G)}(v))\cup \{u,v\}$(the set contain $u,v$ and all vertices incident with both $u,v$). The set of all vertex incidents with both $u,v$ will create a triangle in $TJ_k(G)$ is the set of vertex other than $u,v$ contain $w=(u\cap v)$ and hence $M_i(u)=x$.
            \item Finally, we prove that if $x\in \mathbf{Msets}(TJ_k(G))$ then $x\in \mathbf{Msets}$. Because $u,v\in x$ so $x$ is the set which contains $u,v$ and all vertices incident with both $u,v$. Let $w=(u\cap v)$ then $w\in V(TS_{k-1}(G))$ because $uv$ is an edge in $E(TJ_k(G))$. This makes $x$ the set of vertices contain $w$ and hence $x=\mathbf{Expand}(w)$.
        \end{itemize}
        \item If $x$ has only one element, let $X=\{u\}$. Because there are $k$ subsets $w$ with size $k-1$ of $u$, there are also $k$ sets $\mathbf{Expand}(w)$ that contain $u$. Therefore, there are $k$ sets in $\mathbf{Msets}$ that contain $u$, which is equivalent to $\mathbf{Msets}(TJ_k(G))$. Since we have proven that the number of occurrences of every set with at least two elements is the same, this implies that the number of occurrences of $\{u\}$ in the sets $\mathbf{Msets}$ and $\mathbf{Msets}(TJ_k(G))$ is also the same.
\end{itemize}
\end{proof}
For the next proposition, we use $H + cK_{1}$ to denote the graph obtained from $H$ by adding $c$ isolated vertices.
\begin{proposition}\label{prop:construct-TSk-1}
Given $k=\omega(G)$ and $\sfTJ_k(G)$ as input without knowing the set label of $\sfTJ_k(G)$, one can construct a graph $H$ in $k^2 \cdot poly(|\sfTJ_k(G)|)$ time such that $\sfTS_{k-1}(G) \cong H + cK_1$ for some integer $c \geq 0$.
\end{proposition}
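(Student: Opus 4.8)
The plan is to assemble the structural results in \cref{prop:Siu-TJwG}, \cref{prop:k-good-conds}, \cref{prop:edge_in_TS_k-1}, and \cref{prop:Msets_equal} into an explicit algorithm that reads only the abstract graph $T = \sfTJ_k(G)$ together with $k = \omega(G)$, and outputs the non-isolated part of $\sfTS_{k-1}(G)$. The crucial point is that, although no set label is given, \cref{prop:Msets_equal} tells us that the multiset $\mathbf{Msets}(\sfTJ_k(G))$ produced purely from the $k$-good structure of $T$ coincides with the multiset $\{\mathbf{Expand}(w) : w \in V(\sfTS_{k-1}(G)),\ \mathbf{Expand}(w) \neq \emptyset\}$. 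Hence all adjacency information of $\sfTS_{k-1}(G)$ on its non-isolated vertices is already encoded in $\mathbf{Msets}(\sfTJ_k(G))$, which can be computed label-free.

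First I would recover a concrete $k$-good partition at every vertex. By \cref{prop:Siu-TJwG}, $T$ is $\omega(G)$-good, and in any $k$-good partition the parts $S_i(u)$ are cliques with no edges between distinct parts; therefore $T[N_T(u)]$ is a disjoint union of cliques, and its connected components are exactly the non-empty parts $S_i(u)$. So for each $u \in V(T)$ I compute the connected components of $T[N_T(u)]$ by a single graph search; if there are $t_u \leq k$ of them, I form $M_i(u) = S_i(u) + u$ for each component and record $k - t_u$ copies of the singleton $\{u\}$. Following the construction in the proof of \cref{prop:k-good-conds}, I keep each set of size at least $2$ with multiplicity one (deduplicating the copies generated from its distinct members) and keep the recorded singleton multiplicities; this yields exactly $\mathbf{Msets}(\sfTJ_k(G))$.

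Next I build $H$ as the intersection graph of this multiset: create one vertex of $H$ for each member (occurrence) of $\mathbf{Msets}(\sfTJ_k(G))$, and, for each $x \in V(T)$, turn the (exactly $k$, by condition (c) of \cref{prop:k-good-conds}) members containing $x$ into a clique. Since any two distinct members meet in at most one vertex by condition (a), every edge of $H$ arises from a unique shared vertex, so this produces precisely the pairs $X,Y$ with $X \cap Y \neq \emptyset$ and counts each edge once. By \cref{prop:Msets_equal} the map $w \mapsto \mathbf{Expand}(w)$ is a bijection from the non-isolated vertices of $\sfTS_{k-1}(G)$ onto the members of $\mathbf{Msets}(\sfTJ_k(G))$, and by \cref{prop:edge_in_TS_k-1} two such vertices $w,r$ are adjacent in $\sfTS_{k-1}(G)$ if and only if $\mathbf{Expand}(w) \cap \mathbf{Expand}(r) \neq \emptyset$; thus this bijection is an isomorphism between $H$ and the subgraph of $\sfTS_{k-1}(G)$ induced by its non-isolated vertices. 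By \cref{cor:independent_points_in_TS} the remaining vertices of $\sfTS_{k-1}(G)$ are isolated, so $\sfTS_{k-1}(G) \cong H + cK_1$, where $c$ is their number, which is unrecoverable from $T$ alone (explaining why the statement asserts only the existence of some $c \geq 0$).

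Finally, for the running time: there are $|V(T)|$ neighborhood searches, each polynomial in $|T|$; the multiset has $O(k\,|V(T)| + |E(T)|)$ members, since each set of size $\geq 2$ consumes at least one edge of $T$ by condition (d) and each vertex contributes at most $k$ singletons by condition (c); and the clique-per-vertex edge construction costs $O(k^2)$ per vertex of $T$, giving the stated $k^2 \cdot \mathrm{poly}(\sfTJ_k(G))$ bound. I expect the main obstacle to be bookkeeping rather than conceptual: one must argue that the abstractly computed multiplicities of the singleton sets $\{u\}$ match those of the $\mathbf{Expand}$ multiset (so that the vertex sets biject) and that the multiple copies of a singleton are correctly joined in $H$. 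Both facts are exactly what \cref{prop:Msets_equal} supplies, so the construction goes through (with the degenerate case $k = 1$, where $\sfTS_{0}(G) \cong K_1$, handled directly).
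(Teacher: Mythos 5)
Your proposal is correct and follows essentially the same route as the paper: recover the $k$-good partition of each neighborhood (the paper's Algorithm~\ref{algo:1} greedily extracts exactly the clique components of $T[N_T(u)]$ that you compute directly), assemble the multiset $\mathbf{Msets}$ via $M_i(u)=S_i(u)\cup\{u\}$ with the same multiplicity convention, and take its intersection graph as $H$, with correctness supplied by \cref{prop:Msets_equal}, \cref{prop:edge_in_TS_k-1}, and \cref{cor:independent_points_in_TS}. Your only deviations---building the edges of $H$ as one clique per vertex of $T$ rather than by pairwise intersection tests, and explicitly handling $k=1$---are implementation-level refinements, not a different argument.
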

\begin{proof}
    We call graph $T=(V,E)$ for short for our input graph $\sfTJ_k(G)$.
	Our algorithm will be split into 2 phases. 
	First phase: We want to check if the graph is $k$-good, that is, for each $u\in V$, we can partition $N_T(u)$ into $k$ sets $S_1(u), S_2(u), \dots, S_k(u)$.
    \begin{algorithm}
	\KwIn{A graph $T = \sfTJ_k(G)$}
	\KwOut{For each $u \in V(T)$, a collection of $k$ sets $S_1(u), S_2(u), \dots, S_k(u)$ such that each $S_i(u)$, if non-empty, is a clique of $T$, and no edge of $T$ joining a vertex of $S_i(u)$ and that of $S_j(u)$, where $i \neq j$ and $1 \leq i, j \leq k$}
	\SetArgSty{textbb} 
	\DontPrintSemicolon
    
    \For{$u\in V$}
    {   
        $Remain \gets N_T(u)$\;
        \For{$i=1$ \KwTo $k$}
        {
            \If{$Remain$ is empty}
            {
                $S_i(u) \gets \emptyset$\;
            }
			\Else{
                $v$ be an element of $Remain$\;
                $S_i(u)$ be the sets of elements in $Remain$ that are incident to $v$ and $v$ itself\;
                $Remain \gets Remain-S_i(u)$\;
                \If{$S_i(u)$ does not form a clique}{
                    report $T$ is not $k$-good \;
                }
                \ElseIf{exist an edge from $S_i(u)$ to $Remain$}{
                    report $T$ is not $k$-good \;
                }
            }
        }
        \If{$Remain$ is not empty}{
            report $T$ is not $k$-good \;
        }
    }

	\caption{Algorithm to check if a graph is $k$-good  and partition the neighbour set of each vertex.}
	\label{algo:1}
    \end{algorithm}
    
    \begin{algorithm}
    
    % \KwData{$S_i(u)\forall u\in V, i \in [1,k]$}
	\KwIn{A $k$-good graph $T = \sfTJ_k(G)$ for some graph $G$ without any set label, and for each $u \in V(T)$, a collection of $k$ sets $S_1(u), S_2(u), \dots, S_k(u)$ obtained from Algorithm~\ref{algo:1}}
	\KwOut{A graph $H$ satisfying \cref{prop:construct-TSk-1}}

	\SetArgSty{textbb} 
	\DontPrintSemicolon

    $Msets \gets \emptyset$\;
    \For{$u\in V$}
    {   
        \For{$i=1$ \KwTo $k$}
        {
            $M_i(u) \gets S_i(u)\cup \{u\}$\;
            \eIf {$M_i(u)=\{u\}$}
            {
                $Msets \gets Msets+\{u\}$\;
            }{
                \If{$M_i(u) \notin Msets$}
                {
                    $Msets \gets Msets+M_i(u)$\;
                }
            }
        }
    }
    Edge set $E_H \gets \emptyset$\;
    \For{$U,V \in Msets$}
    {
        \If{$|U\cap V| \neq \emptyset$}
        {
            $E_H \gets E_H \cup \{U,V\}$\;
        }
    }
    \Return graph $H=(Msets,E_H)$\;

	\caption{Algorithm to obtain a graph $H$ satisfying \cref{prop:construct-TSk-1}.}
	\label{algo:2}
    \end{algorithm}
    
Second phase: We initialize multiset $\mathbf{Msets}$ and insert each of $M_i(u)$ if it has only one element or is not contained. 
Next, we will create a new graph $H$ with vertex set being the multiset $\mathbf{Msets}$ and the edges will be between those two set that have common elements.

$\mathbf{Evaluation:}$ The running time of first algorithm is at most $|N_T(u)|^2$ for each vertex $u\in V$ and $i\in [1,k]$. This means, in total, the running time is at most $k\cdot \sum_{u\in V}|N_T(u)|^2$ which is $k\cdot poly(size(T))$. For the second algorithm, the checking if $M_i(u)\in Msets(u\in V, i \in [1,k])$ can be done in $O(size(T))$ since we can check if there is any vertex in $M_i(u)$ that has been iterated and also then building the $E_H$ take at most $k^2\cdot poly(size(T))$ since $Msets$ has at most $k\cdot poly(size(T))$ elements. Hence, in total, we need a running time of $O(k^2\cdot poly(size(T)))$.

$\mathbf{Justification:}$ Firstly, we prove that Algorithm~\ref{algo:1} can be used to check for $k$-good graph and also split $N_T(u)$ into $k$ disjoint sets that form a clique. 
\begin{itemize}
    \item  If Algorithm~\ref{algo:1} runs successfully, then for every $u \in V$ we can partition $N_T(u)$ into $k$ disjoint sets $S_1(u), S_2(u), \dots, S_k(u)$ such that each form a clique, and there is no edge between different sets. 
	Hence, $T$ is a $k$-good graph and the partition of it is valid.
    \item If Algorithm~\ref{algo:1} fails somewhere, we consider the following cases.
	\begin{itemize}
        \item If it fails because $S_i(u)$ does not form a clique. This means there are $u,v, a,b$ in $V$ such that $uv,ua, ub,va,vb\in E$ but not $ab$. This means that $T$ is not $k$-good since if it is then $a,b$ cannot be in the same clique in the partition of $N_T(u)$ but $a,v$ and $v,b$ must be in the same clique which creates a contradiction.
        \item If it fails because $S_i(u),Remain$ have an edge between them. This means there are $u,v,a,b$ in $V$ such that $uv,ua, ub,va,ab\in E$ but not $vb$. This means that $T$ is not $k$-good because of the same explanation as above.
        \item If it fails because after the loop $Remain$ is not empty. Then the induced subgraph of $T$ with vertex set $N_T(u)$ have at least $k+1$ connected components which does not apply for $k$-good graph. Hence, $T$ is not $k$-good.
    \end{itemize}
\end{itemize}
Next, the second algorithm follows the idea of \cref{prop:Msets_equal} and \cref{prop:edge_in_TS_k-1} to build $\sfTS_{k-1}(G)$. 
However, there are several isolated points because of \cref{cor:independent_points_in_TS}.
\end{proof}

In the next part, for some integer $k$, recall that a graph $T$ satisfying that $T \cong \sfTJ_k(G)$ is called a \textit{$\sfTJ_k$-graph}. 
Additionally, we call $T$ a \textit{maximum $\sfTJ_k$-graph} if it is a $\sfTJ_{k}$-graph for some graph $G$ satisfying $k = \omega(G)$.
\begin{proposition}
\label{prop:maximal_TJ_graphic}
    For an integer $k$, if $T$ is a maximum $\sfTJ_k$-graph then it is a (maximum) $\sfTJ_n$-graph for any $n\geq k$.
\end{proposition}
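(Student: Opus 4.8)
The plan is to prove the statement by an explicit construction: given a graph $G$ with $\omega(G) = k$ and $T \cong \TJ{k}{G}$, I would build, for each $n \geq k$, a graph $G'$ with $\omega(G') = n$ whose $\sfTJ_n$-graph is isomorphic to $T$. The case $n = k$ is immediate (take $G' = G$), so assume $n > k$. The natural candidate is to \emph{join} $G$ with a clique: let $Q = \{q_1, \dots, q_{n-k}\}$ be a set of $n - k$ fresh vertices, and let $G'$ be obtained from $G$ by adding $Q$, making $Q$ a clique, and joining every $q_i$ to every vertex of $G$. Since $G$ is connected and each $q_i$ is adjacent to all of $V(G)$, the graph $G'$ is connected, as required by our conventions.

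Next I would compute $\omega(G')$. Because every $q_i$ is adjacent to all other vertices, every clique of $G'$ has the form $C \cup S$ where $C$ is a clique of $G$ and $S \subseteq Q$; its size is $|C| + |S| \leq \omega(G) + |Q| = k + (n - k) = n$, and equality is attained by taking any maximum clique of $G$ together with all of $Q$. Hence $\omega(G') = n$. The key observation, and the step that pins down the whole argument, is the characterization of the $n$-cliques of $G'$: since $|C| \leq k$ and $|S| \leq n - k$, the equality $|C| + |S| = n$ forces $|C| = k$ and $S = Q$. Thus the $n$-cliques of $G'$ are exactly the sets $C \cup Q$ with $C$ a $k$-clique (equivalently, a maximum clique) of $G$, and the map $\phi : C \mapsto C \cup Q$ is a bijection from the $k$-cliques of $G$ onto the $n$-cliques of $G'$.

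It then remains to check that $\phi$ is an isomorphism $\TJ{k}{G} \to \TJ{n}{G'}$. For two $k$-cliques $C_1, C_2$ of $G$, since $Q \subseteq \phi(C_1)$ and $Q \subseteq \phi(C_2)$ we have $\phi(C_1) \setminus \phi(C_2) = C_1 \setminus C_2$ and $\phi(C_2) \setminus \phi(C_1) = C_2 \setminus C_1$. Hence $\phi(C_1) \setminus \phi(C_2) = \{u\}$ and $\phi(C_2) \setminus \phi(C_1) = \{v\}$ for some $u, v$ if and only if the same holds for $C_1, C_2$; that is, $C_1 C_2 \in E(\TJ{k}{G})$ if and only if $\phi(C_1)\phi(C_2) \in E(\TJ{n}{G'})$. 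Therefore $\TJ{n}{G'} \cong \TJ{k}{G} \cong T$, and since $\omega(G') = n$, the graph $T$ is a maximum $\sfTJ_n$-graph, which establishes the claim for all $n \geq k$.

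I would expect the main (albeit modest) obstacle to be the clique-number bookkeeping: one must verify both that the join introduces no clique of size exceeding $n$ and, crucially, that every $n$-clique of $G'$ absorbs the \emph{entire} set $Q$, since it is precisely this rigidity that makes $\phi$ a bijection and lets the $\sfTJ$-moves transfer verbatim. Everything else follows routinely from the fact that $Q$ is contained in every $n$-clique.
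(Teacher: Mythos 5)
Your proposal is correct and follows essentially the same route as the paper: the paper likewise joins $G$ (with $\omega(G)=k$ and $\sfTJ_k(G)\cong T$) to a complete graph $K_{n-k}$, observes that every $n$-clique of the resulting graph is a $k$-clique of $G$ together with the whole added clique, and concludes $\sfTJ_n(G_n)\cong \sfTJ_k(G)\cong T$. Your write-up merely makes explicit the clique-number bookkeeping and the verification that $C\mapsto C\cup Q$ preserves $\sfTJ$-adjacency, details the paper leaves implicit.
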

\begin{proof}
    Let $G$ be the graph such that $\omega(G)=k$ and $\sfTJ_k(G)\cong T$. 
	For a number $n\geq k$, let the graph $G_n$ be the graph $G$ but add a clique $K_{n-k}$ and all the edges between the vertices in $G$ and the clique. 
	We can see that $\omega(G_n)=k+(n-k)=n$ and every clique with size $n$ must be formed from a clique with size $k$ in $G$ and the clique $K_{n-k}$. 
	Hence, $\sfTJ_n(G_n)\cong \sfTJ_k(G)\cong T$ which means $T$ is a maximum $\sfTJ_n$-graph. 
\end{proof}

\begin{proposition}
\label{prop:infinite_TJ_graphic}
    If for infinitely many integers $n$, a non-empty graph $T$ is a $\sfTJ_n$-graph then there exists $k$ so that $T$ is a maximum $\sfTJ_k$-graph. 
\end{proposition}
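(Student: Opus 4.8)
The plan is to bound, for any representation $T \cong \TJ{n}{G}$, how much the clique number $\omega(G)$ can exceed $n$ in terms of the fixed quantity $N := |V(T)|$. First I would record the basic constraint: since $T$ is non-empty, $G$ has at least one $n$-clique, so $\omega(G) \geq n$. Fix a maximum clique $K$ of $G$, of size $\omega(G)$. Every one of its $\binom{\omega(G)}{n}$ subsets of size $n$ is an $n$-clique of $G$ (because $K$ is complete), hence a distinct vertex of $\TJ{n}{G} \cong T$. This yields the inequality $N \geq \binom{\omega(G)}{n}$.

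The key step is to turn this into an upper bound on the surplus $\omega(G) - n$. If $\omega(G) \geq n+1$, then since $\binom{\cdot}{n}$ is increasing on integers $\geq n$ we get $\binom{\omega(G)}{n} \geq \binom{n+1}{n} = n+1$, so $N \geq n+1$, i.e.\ $n \leq N-1$. Taking the contrapositive: whenever $n \geq N$, every graph $G$ witnessing $T$ as a $\sfTJ_n$-graph must satisfy $\omega(G) = n$. In other words, for all $n \geq N$, merely being a $\sfTJ_n$-graph already forces being a maximum $\sfTJ_n$-graph.

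To finish, I would invoke the hypothesis that $T$ is a $\sfTJ_n$-graph for infinitely many $n$: the set of such $n$ is an infinite, hence unbounded, set of integers, so it contains some $k \geq N$. For this $k$, the previous step shows that any witnessing graph $G$ with $\TJ{k}{G} \cong T$ has $\omega(G) = k$, so $T$ is a maximum $\sfTJ_k$-graph, as required. (Note that this is consistent with \cref{prop:maximal_TJ_graphic}, which then guarantees $T$ is a maximum $\sfTJ_n$-graph for every $n \geq k$.)

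I do not expect a genuinely hard obstacle; the one idea that must be spotted is that the fixed vertex count $N = |V(T)|$ caps the surplus $\omega(G) - n$, through the observation that a single maximum clique of $G$ already contributes $\binom{\omega(G)}{n}$ vertices to $T$. The only minor points to verify are that $N \geq 1$ (guaranteed because $T$ is non-empty) and that the witnessing graphs are finite, so that $N$ is a well-defined finite bound; both hold under the paper's standing assumptions.
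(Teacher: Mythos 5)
Your proof is correct, and it rests on a different key estimate than the paper's. Both arguments share the same skeleton---pick a sufficiently large $n$ from the infinite family and show that any witness $G$ with $T \cong \sfTJ_n(G)$ must satisfy $\omega(G) = n$, the inequality $\omega(G) \geq n$ being forced by non-emptiness in both cases---but the threshold and the lemma behind it differ. The paper takes $N > \omega(T)$ and invokes \cref{cor:maxclique_TJ} (inherited from \cref{thm:clique-num-TSgraph} via the subgraph relation between $\sfTS_N(G)$ and $\sfTJ_N(G)$): if $\omega(G) > N$, then $T$ would contain an $(N+1)$-clique, contradicting $\omega(T) = m < N+1$. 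You instead take $k \geq |V(T)|$ and count vertices: a single maximum clique of $G$ already contributes $\binom{\omega(G)}{n}$ distinct $n$-cliques, i.e.\ distinct vertices of $T$, so $\omega(G) \geq n+1$ would force $|V(T)| \geq \binom{n+1}{n} = n+1$. Your route is more elementary and fully self-contained---it uses no adjacency information about $T$ whatsoever, only its order, so it does not depend on the clique-number machinery of the earlier sections---whereas the paper's route buys a sharper threshold: since $\omega(T) \leq |V(T)|$, the paper's argument shows maximality is forced already for every $n > \omega(T)$, while yours kicks in only from $n \geq |V(T)|$; both suffice because the infinite set of admissible $n$ consists of nonnegative integers and is therefore unbounded above. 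The side conditions you flag (finiteness of $|V(T)|$, monotonicity of $\binom{\cdot}{n}$ on arguments at least $n$, which needs $n \geq 1$ and holds since $|V(T)| \geq 1$) all check out.
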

\begin{proof}
    Let $m=\omega(T)$. 
	Since $T$ is a $\sfTJ_n$-graph for infinitely many integers $n$, there must be a number $N>m$ such that $T$ is a $\sfTJ_N$-graph.
	Let $G$ be a graph such that $T \cong \sfTJ_N(G)$. 
	If $\omega(G)>N$, then $\omega(T)=\omega(\sfTJ_N(G))\geq N+1>m$ (\cref{cor:maxclique_TJ}), which creates a contradiction. 
	Hence, $\omega(G)=N$ (since $T$ is non-empty, $\omega(G)<N$ cannot happen). 
	Hence, $\omega(G)=N$ and hence $T$ is a maximum $\sfTJ_N$-graph.
\end{proof}

Thus, because of \cref{prop:maximal_TJ_graphic} and \cref{prop:infinite_TJ_graphic}, we have the following corollary.
\begin{corollary}
    A non-empty graph $T$ is a $\sfTJ_n$-graph for infinitely many integers $n$ if and only if there exists $k$ so that graph $T$ is a maximum $\sfTJ_k$-graph. 
\end{corollary}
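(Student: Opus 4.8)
The plan is to read off the corollary as the logical conjunction of the two preceding propositions, establishing each direction of the biconditional separately. Since \cref{prop:maximal_TJ_graphic} and \cref{prop:infinite_TJ_graphic} already carry all the structural content, the proof reduces to checking that each implication is exactly one of these statements (modulo reinterpreting ``for all $n \geq k$'' as ``for infinitely many $n$'').

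For the backward direction ($\Leftarrow$), I would assume that there exists an integer $k$ with $T$ a maximum $\sfTJ_k$-graph. By \cref{prop:maximal_TJ_graphic}, $T$ is then a $\sfTJ_n$-graph for every $n \geq k$. Since the set $\{n : n \geq k\}$ is infinite, $T$ is a $\sfTJ_n$-graph for infinitely many integers $n$, which is precisely the left-hand side. For the forward direction ($\Rightarrow$), I would assume $T$ is non-empty and a $\sfTJ_n$-graph for infinitely many integers $n$; this is exactly the hypothesis of \cref{prop:infinite_TJ_graphic}, whose conclusion delivers an integer $k$ for which $T$ is a maximum $\sfTJ_k$-graph, i.e.\ the right-hand side.

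The only points deserving a word of care are that the non-emptiness hypothesis of $T$ is genuinely needed for the forward direction (it is used inside \cref{prop:infinite_TJ_graphic} to rule out the degenerate possibility $\omega(G) < N$), whereas the backward direction does not require it. I expect no substantive obstacle here: the entire difficulty resides in the two propositions being invoked, so this corollary is a two-line deduction. I would simply state both directions, cite the corresponding proposition for each, and conclude.

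\begin{proof}
($\Leftarrow$) Suppose there exists an integer $k$ such that $T$ is a maximum $\sfTJ_k$-graph. By \cref{prop:maximal_TJ_graphic}, $T$ is a $\sfTJ_n$-graph for every $n \geq k$. As there are infinitely many such $n$, $T$ is a $\sfTJ_n$-graph for infinitely many integers $n$.

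($\Rightarrow$) Suppose the non-empty graph $T$ is a $\sfTJ_n$-graph for infinitely many integers $n$. By \cref{prop:infinite_TJ_graphic}, there exists an integer $k$ such that $T$ is a maximum $\sfTJ_k$-graph.
\end{proof}
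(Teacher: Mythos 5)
Your proof is correct and matches the paper exactly: the paper states this corollary as an immediate consequence of \cref{prop:maximal_TJ_graphic} (giving the backward direction) and \cref{prop:infinite_TJ_graphic} (giving the forward direction), which is precisely your two-line deduction. Your added remark that non-emptiness is needed only in the forward direction is a correct and harmless refinement.
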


\begin{proposition}
\label{prop:diamond_free_TJ_graphic}
    For $k\leq 3$, if a non-empty graph $T$ is a $\sfTJ_k$-graph and $T$ does not contain a diamond ($K_4-e$) as an induced subgraph then $T$ is a maximum $\sfTJ_k$-graph.
\end{proposition}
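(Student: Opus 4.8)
The plan is to fix a graph $G$ with $T \cong \sfTJ_k(G)$ and then manufacture a graph $G'$ with $\omega(G') = k$ and $\sfTJ_k(G') \cong T$; producing such a $G'$ is exactly what it means for $T$ to be a \emph{maximum} $\sfTJ_k$-graph. Since $T$ is non-empty we have $\omega(G) \geq k$, and the case $k = 1$ is immediate: $\sfTJ_1(H) \cong K_{|V(H)|}$ for every $H$, so taking $G'$ to be the edgeless graph on $|V(G)|$ vertices (which has $\omega(G') = 1$) finishes it. For $k \in \{2,3\}$ the strategy is to \emph{drive $\omega(G)$ down to $k$}: I first show the no-diamond hypothesis forces $\omega(G) \leq k+1$, so that only $\omega(G) \in \{k, k+1\}$ survive; the value $\omega(G) = k$ needs no work, and the value $\omega(G) = k+1$ is removed by a local surgery.

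To rule out $\omega(G) \geq k+2$, observe that a $(k+2)$-clique $W \subseteq G$ induces a copy of $\sfTJ_k(K_{k+2})$ in $T$. Identifying a $k$-subset of $W$ with the pair of vertices it omits, two such $k$-cliques are $\sfTJ_k$-adjacent precisely when the omitted pairs meet in one vertex, so $\sfTJ_k(K_{k+2}) \cong L(K_{k+2})$, the triangular graph. For $k \geq 2$ this contains an induced diamond: the edges $\{12,13,14,23\}$ of $K_{k+2}$ induce a $K_4 - e$. As $\sfTJ_k(K_{k+2})$ sits in $T$ as an induced subgraph, $T$ then contains an induced diamond, a contradiction.

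The heart of the argument is the case $\omega(G) = k+1$. Let $Q$ be a maximal $(k+1)$-clique. Its $k+1$ sub-$k$-cliques $Q - q$ ($q \in Q$) pairwise share $k-1$ vertices, hence induce a $K_{k+1}$ in $T$, and the key claim is that this $K_{k+1}$ is a whole connected component of $T$. For any $k$-clique $R \not\subseteq Q$, set $I = R \cap Q$; then $R$ is $\sfTJ_k$-adjacent to $Q - q$ iff $|I - q| = k-1$, and since $R \not\subseteq Q$ forces $|I| \leq k-1$, a short count shows adjacency can occur only when $|I| = k-1$, in which case it holds for \emph{exactly two} values of $q$, say $q_1, q_2 \in Q \setminus I$. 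Picking a third index $q_3 \in I$ (possible as $|I| = k-1 \geq 1$), the vertices $R, Q-q_1, Q-q_2, Q-q_3$ induce a diamond. Thus the no-diamond hypothesis forces every external $k$-clique to miss all $Q - q$, so $\{Q-q : q \in Q\}$ is a component $\cong K_{k+1}$ of $T$. The same intersection count, applied to the $k$-cliques arising from a second maximal $(k+1)$-clique $Q'$, shows $|Q \cap Q'| \leq k-2$; hence distinct maximal $(k+1)$-cliques are edge-disjoint and contribute vertex-disjoint $K_{k+1}$ components to $T$.

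Finally I carry out the surgery. For each maximal $(k+1)$-clique $Q$, delete its $\binom{k+1}{2}$ edges (well defined, since distinct such cliques are edge-disjoint) and attach a fresh $\Int$-gadget $\Gamma_Q$ on new vertices: a $(k-1)$-clique together with $k+1$ pairwise non-adjacent apices, each joined to all of the $(k-1)$-clique. Then $\omega(\Gamma_Q) = k$ and $\sfTJ_k(\Gamma_Q) \cong K_{k+1}$. Because every $k$-clique of $G$ not contained in a $(k+1)$-clique meets each $Q$ in at most $k-2 < 2$ vertices (by the claim above), it uses no edge of $Q$; so deleting the $Q$-edges destroys precisely the $k$-cliques $Q-q$ while leaving all other $k$-cliques and their adjacencies intact, and the gadgets reinstate exactly the deleted $K_{k+1}$ components. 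The resulting $G'$ therefore satisfies $\sfTJ_k(G') \cong T$, and since all $(k+1)$-cliques are broken while the gadgets create none, $\omega(G') = k$. I expect the main obstacle to be the isolation step of the third paragraph---proving, through the $|R \cap Q|$ case analysis, that the $(k+1)$-cliques appear in $T$ as clean, pairwise vertex-disjoint $K_{k+1}$ components---since this is precisely what makes the surgery both well defined and adjacency-preserving.
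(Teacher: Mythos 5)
Your proof is correct, and it takes a genuinely different route from the paper's. The paper argues extremally: among all graphs $G$ with $\sfTJ_k(G) \cong T$, it fixes one maximizing the number of vertices covered by $k$-cliques (bounded by $k \cdot |V(T)|$), and then shows that a clique of size greater than $k$ leads either to an induced diamond in $T$ (when some edge, resp.\ triangle, attaches to the big clique from outside) or to a local replacement (for $k=2$: erase $BC$, add a pendant neighbour $D$ of $A$; for $k=3$: erase $CD$, add two new common neighbours $E, F$ of $A, B$) that preserves $\sfTJ_k$ while enlarging the cover, contradicting maximality. You instead prove a structure theorem and build the witness in one shot: the $L(K_{k+2})$ computation bounds $\omega(G) \leq k+1$ (a step the paper never needs, since its ``attached'' diamond case already covers any $\omega \geq k+2$); your isolation lemma --- an external $k$-clique $R$ with $|R \cap Q| = k-1$ yields the induced diamond $R, Q - q_1, Q - q_2, Q - q_3$ with $q_3 \in R \cap Q$ --- shows that each $(k+1)$-clique contributes an isolated $K_{k+1}$ component of $T$ and that distinct $(k+1)$-cliques meet in at most $k - 2$ vertices; and the book gadget (a $(k-1)$-clique plus $k+1$ pairwise non-adjacent apices) replaces each such clique. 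The diamond-detection kernel is the same configuration the paper exploits in its first subcases, but your version is uniform in $k$ and makes explicit exactly where $k \leq 3$ enters (surviving $k$-cliques meet each $(k+1)$-clique in at most $k - 2 < 2$ vertices, so the edge deletions are harmless and the $(k+1)$-cliques are edge-disjoint), whereas the paper's argument is shorter per case but non-constructive and relies on separate ad hoc gadgets for $k = 2$ and $k = 3$. Your approach buys an explicit construction of a graph $G'$ with $\omega(G') = k$ and $\sfTJ_k(G') \cong T$, and as a byproduct a clean structural description of diamond-free maximum-plus-one instances; the paper's extremal scheme avoids having to verify global claims such as your component-isolation and vertex-disjointness steps, which, as you correctly flag, carry the real weight of your argument.
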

\begin{proof}
    If $k=1$, then the problem becomes trivial. The graph $T$ must be a graph $K_n$ for some $n$, and we can take the graph $H$ with $n$ vertices and no edges and show that $T\cong \sfTJ_1(H)$ and $\omega(H)=1$.
    
    If $k=2,3$, we consider the set of graphs $G$ such that $T\cong \sfTJ_k(G)$, we call this set $M$. 
	Here we refer each vertex of $\sfTJ_k(G)$ as a set of size $k$ that form a clique in $G$. 
	We can observe that the union $\cup_{v\in V(\sfTJ_k(G))}\{v\}$ is bounded by $k\cdot |V(\sfTJ_k(G))|=k \cdot |V(T)|$ elements. 
	So among the graph in $M$, we take a graph $G$ that produces it with the largest elements, we will prove that $\omega(G)=k$.
    \begin{itemize}
        \item If $k=2$ and $\omega(G)\geq 3$, there exists a triangle in $G$ and let any triangle in $G$ be $A,B,C$.
        \begin{itemize}
            \item We consider the case where there exists an edge in $G$ that is not in the triangle and connects to one of $A, B, C$. 
			Let's assume the edge is $AD$. 
			Then we can see the 4 edges $\{A, B\},\{A, C\},\{B, C\},\{A, D\}$ form a diamond in $\sfTJ_2(G)$, which cannot happen since $T$ does not have a diamond as an induced subgraph.
            \item If there is no edge other than $AB , BC, CA$ that connects to points $A, B, C$ in $G$, then we erase the edge $\{B,C\}$ and add a new vertex $D$ and connect it to $A$ to make a new graph $G_0$. We can prove that $\sfTJ_2(G)\cong \sfTJ_2(G_0)$ if we map the vertex in $V(\sfTJ_2(G))$ except for $\{B,C\}$ to its self in $V(\sfTJ_2(G_0))$ and map $\{B,C\}$ to $\{A,D\}$. So $\sfTJ_2(G_0)\cong T$ but also $|\cup_{v\in V(TJ_2(G_0))}\{v\}|=|\cup_{v\in V(TJ_2(G))}\{v\}|+1$ since there is a new point, $D$, this contradicts the definition of $G$.
        \end{itemize}
        \item If $k=3$ and $\omega(G)\geq 4$, there exists a $K_4$ in $G$, and let any $K_4$ in $G$ be $A,B,C,D$.
        \begin{itemize}
            \item If there is a triangle that is not a subset of $\{A, B, C, D\}$ and contains an edge of $\{A, B, C, D\}$, let's assume the triangle is $\{E, A, B\}$. 
			We consider the 4 triangles $\{E,A,B\}$, $\{A,B,C\}$, $\{A,B,D\}$, $\{A,C,D\}$; these will create a diamond in $\sfTJ_3(G)$. 
			This cannot happen since $T$ does not have an induced subgraph diamond.
            \item If there is no such triangle, we erase the edge $CD$ from $G$ and add two new vertices $E, F$ and connect them to $A, B$.  
			We can prove that $\sfTJ_3(G)\cong \sfTJ_3(G_0)$ if we map the vertex in $V(\sfTJ_3(G))$ except for $\{A, C, D\},\{B, C, D\}$ to its self in $V(\sfTJ_3(G_0))$ and map $\{A, C, D\}$ to $\{A,B,E\}$, $\{B, C, D\}$ to $\{A,B,F\}$. 
			So $\sfTJ_3(G_0)\cong T$ but also $|\cup_{v\in V(TJ_3(G_0))}\{v\}|=|\cup_{v\in V_{\sfTJ_3(G)}}\{v\}|+2$ since there are new points $D,E$, this contradicts the definition of $G$.
        \end{itemize}
    \end{itemize}
\end{proof}

\begin{corollary}
For an integer $k\leq 3$, if a graph is $T$ is a $\sfTJ_n$-graph for all $n\geq k$, then $T$ is a maximum $\sfTJ_k$-graph.
\end{corollary}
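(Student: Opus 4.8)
The plan is to combine the two structural facts already established about maximum $\sfTJ$-graphs, using diamond-freeness as the bridge between the hypothesis and the conclusion. Throughout I work with $T$ non-empty, as required by the propositions I intend to cite; this is also genuinely necessary, since the empty graph is a $\sfTJ_n$-graph for every $n \geq k$ (realized by graphs of clique number below $n$) yet is not a maximum $\sfTJ_k$-graph, because $\omega(G)=k$ forces $G$ to contain a $k$-clique and hence $\sfTJ_k(G)$ to have a vertex.

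First, I would weaken the hypothesis that $T$ is a $\sfTJ_n$-graph for \emph{all} $n \geq k$ to the statement that it is a $\sfTJ_n$-graph for \emph{infinitely many} $n$. By \cref{prop:infinite_TJ_graphic}, this alone forces $T$ to be a maximum $\sfTJ_{k'}$-graph for some integer $k'$; that is, $T \cong \sfTJ_{k'}(G')$ for some graph $G'$ with $\omega(G') = k'$.

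The key observation is the second step: since $T \cong \sfTJ_{\omega(G')}(G')$, \cref{prop:maximal_TJ_is_diamond_free} tells me that $T$ contains no induced diamond. What matters here is that being a maximum $\sfTJ$-graph at \emph{some} size $k'$ automatically confers diamond-freeness, and diamond-freeness is a size-independent property of $T$ itself, so it is available no matter what $k'$ turns out to be.

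Finally, I would specialize the hypothesis to $n = k$, giving that $T$ is a $\sfTJ_k$-graph; combined with $k \leq 3$ and the diamond-freeness just obtained, \cref{prop:diamond_free_TJ_graphic} immediately yields that $T$ is a maximum $\sfTJ_k$-graph. I do not anticipate a serious obstacle, since all the genuine difficulty is already absorbed into \cref{prop:diamond_free_TJ_graphic}; the only points requiring care are verifying the non-emptiness hypothesis and recording that the restriction $k \leq 3$ is essential, as it is exactly the range in which diamond-freeness together with being a $\sfTJ_k$-graph is known to pin down a maximum $\sfTJ_k$-graph.
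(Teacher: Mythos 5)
Your proposal is correct and follows essentially the same route as the paper's own proof: apply \cref{prop:infinite_TJ_graphic} to get that $T$ is a maximum $\sfTJ_N$-graph for some $N$, deduce diamond-freeness from \cref{prop:maximal_TJ_is_diamond_free}, and then conclude via \cref{prop:diamond_free_TJ_graphic} using the case $n=k$ of the hypothesis. Your explicit attention to the non-emptiness hypothesis (which the paper's corollary statement omits but its cited propositions require, and which is genuinely necessary since the empty graph satisfies the hypothesis for $k\in\{2,3\}$ without the conclusion) is a small improvement in rigor, not a different approach.
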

\begin{proof}
    Because of \cref{prop:infinite_TJ_graphic}, then there exist a number $N$ such that $T$ is a maximum $\sfTJ_N$-graph. Because of \cref{prop:maximal_TJ_is_diamond_free}, $T$ does not contain diamond as its induced subgraph. By \cref{prop:diamond_free_TJ_graphic}, $T$ is a maximum $\sfTJ_k$-graph. 
\end{proof}

\section{Planar $\sfTS_k$-Graphs}
\label{sec:planar-TSk}

This section is devoted to proving the following theorem.
\begin{theorem}\label{thm:TS-planar}
	If a graph $G$ is planar, then so is $\sfTS_k(G)$, where $1 \leq k \leq \omega(G) \leq 4$.
\end{theorem}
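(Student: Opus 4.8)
The plan is to fix a planar embedding of $G$ and draw $\sfTS_k(G)$ directly on top of it, exploiting that edges of $\sfTS_k(G)$ may cross edges of $G$ freely: an edge of $G$ is neither a vertex nor an edge of $\sfTS_k(G)$, so the only crossings to avoid are between two edges of $\sfTS_k(G)$. Since $G$ is planar we automatically have $\omega(G)\le 4$ (a planar graph has no $K_5$), so only $k\in\{1,2,3,4\}$ occurs. Two cases are immediate: for $k=1$ we have $\sfTS_1(G)\cong G$, which is planar by hypothesis, and for $k=\omega(G)$ the graph $\sfTS_k(G)$ has no edges by \cref{thm:clique-num-TSgraph}(b), hence is planar. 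It remains to treat $k=2$ (when $\omega(G)\ge 3$) and $k=3$ (when $\omega(G)=4$).

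For $k=2$ I would place one vertex $v_e$ of $\sfTS_2(G)$ at the midpoint of each edge $e$. Two edges of $G$ are adjacent in $\sfTS_2(G)$ exactly when they are two sides of a common triangle, and distinct triangles of $G$ contribute edge-disjoint triangles to $\sfTS_2(G)$. For each triangle $uvw$ I would draw the triangle on $v_{uv},v_{uw},v_{vw}$ inside a thin ribbon around the cycle $uvw$, routing its three sides as ``corner arcs'' hugging $u$, $v$, $w$. All corner arcs at a vertex $u$ then live in a small disk around $u$ and join points on the edges incident to $u$; drawing them without crossings amounts to showing that the associated chords form a non-crossing diagram in the rotation order at $u$. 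The key lemma is exactly this: if triangles $uvw$ and $uv'w'$ gave crossing chords, i.e. $uv,uv',uw,uw'$ were interleaved in the rotation at $u$, then $v'$ would lie inside and $w'$ outside the disk bounded by the $3$-cycle $uvw$, forcing the edge $v'w'$ to cross that $3$-cycle --- impossible in a planar embedding. Non-crossing chord diagrams are realizable in the disk, and inside each edge-band all arcs emanate from the single point $v_e$, so the pieces assemble into a plane drawing of $\sfTS_2(G)$.

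For $k=3$ I would argue structurally. Two triangles are adjacent in $\sfTS_3(G)$ iff they share an edge $xy$ with adjacent apexes $u,v$, i.e. iff $\{x,y,u,v\}$ is a $K_4$; hence every edge of $\sfTS_3(G)$ lies in a unique clique on four vertices --- the four triangles of a single $K_4$ $Q$ of $G$ --- and two distinct such cliques share at most one vertex (two triangles determine their $K_4$). Planarity of $G$ forces two further facts: a triangle has at most two common neighbours in $G$ (three would give a $K_{3,3}$), so each triangle lies in at most two such blocks; and when a triangle $abc$ lies in two $K_4$'s their apexes sit on opposite sides of the $3$-cycle $abc$ (two apexes on the same side cannot both be joined to all of $a,b,c$ without an edge crossing). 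Consequently the blocks are organized by the laminar family of face-regions of the embedded $K_4$'s: a block sharing face $T$ of $Q$ is confined to the disk bounded by $T$ on the far side of $Q$'s fourth vertex, and these disks strictly nest. I would use this to prove that the block-intersection graph $\mathcal{B}$ (nodes $=$ $K_4$'s of $G$, adjacency $=$ sharing a triangle) is a forest.

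Once $\mathcal{B}$ is a forest, the $K_4$-blocks of $\sfTS_3(G)$ pairwise meet only in cut vertices, so every biconnected component of $\sfTS_3(G)$ is a single $K_4$ (every edge lies in a $K_4$, so there are no bridges, and no two blocks share a component). A graph is planar iff all its biconnected components are, and $K_4$ is planar, so $\sfTS_3(G)$ is planar. I expect the main obstacle to be the $k=3$ forest lemma: turning the local ``opposite sides'' observation into a clean global nesting argument ruling out cycles among the $K_4$'s (blocks attached through two distinct faces of a common $Q$ share at most an edge, hence no triangle, which kills short cycles, while the strict nesting of face-disks kills longer ones). Both cases ultimately rest on the same principle --- a forbidden local configuration in $\sfTS_k(G)$ would force two edges of $G$ to cross --- so the embedding of $G$ is used essentially, not merely the bound $\omega(G)\le 4$.
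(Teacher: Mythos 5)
Your proposal is correct in outline, and for $k\in\{1,2,4\}$ it coincides with the paper's treatment: the $k=1$ and $k=\omega(G)$ cases are dispatched identically, and your $k=2$ construction (a vertex at the midpoint of each edge, triangle sides routed as corner arcs hugging $u,v,w$, with non-crossing justified by the nesting/disjointness of angles in the rotation at each vertex) is essentially the paper's \cref{lem:TS2-planar} together with its \cref{app:detail_drawing_TS_2}. The genuine divergence is in how you finish $k=3$. Your block-intersection graph $\mathcal{B}$ (nodes the $K_4$'s of $G$, adjacency sharing a triangle) is exactly $\sfTJ_4(G)$, so your ``forest lemma'' is the same key lemma the paper proves in \cref{app:property_TJ_4} --- though by a different argument: the paper fixes a straight-line embedding and takes a $K_4$ on a cycle with minimum-area convex hull, whereas you propose the opposite-side-apexes observation plus strict nesting of the face-disks $R_1\supsetneq R_2\supsetneq\cdots$, which does close up correctly (after $m\geq 3$ steps the vertex of $Q_1$ off the first shared triangle is forced into a region whose closure excludes it). Where you really depart is after the lemma: the paper feeds the acyclic, max-degree-$4$ graph $\sfTJ_4(G)$ into the reconstruction algorithm of \cref{prop:construct-TSk-1} and builds an explicit planar drawing of $\sfTS_3(G)$ by induction on the tree, while you observe that every edge of $\sfTS_3(G)$ lies in a unique $K_4$-block, two blocks share at most one vertex, and acyclicity of $\mathcal{B}$ forces every cycle of $\sfTS_3(G)$ into a single block, so the biconnected components are $K_4$'s and planarity follows from the classical ``planar iff all blocks are planar'' fact. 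Your route buys a cleaner, embedding-free conclusion once the forest lemma is in hand --- it needs neither the degree bound on $\sfTJ_4(G)$ nor the $k$-good machinery and algorithmic induction --- at the cost that the one nontrivial step (the forest lemma, which you rightly flag as the main obstacle and only sketch) carries the entire weight; the paper's approach, conversely, yields an explicit drawing and reuses its \cref{prop:construct-TSk-1} infrastructure.
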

We remark that \cref{thm:TS-planar} is trivial when either $k = 1$ or $k = 4$.
When $k = 1$, $\sfTS_k(G) \cong G$ and therefore it is planar.
When $k = 4$, no two vertices of $\sfTS_k(G)$ are adjacent (otherwise, $G$ must have a $K_5$, which contradicts the assumption that it is planar), and thus it is again planar.

In the next lemma, we consider the case $k = 2$.
\begin{lemma}\label{lem:TS2-planar}
	If a graph $G$ is planar, then so is $\sfTS_2(G)$.
\end{lemma}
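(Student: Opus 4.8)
The plan is to reinterpret $\sfTS_2(G)$ combinatorially and then exhibit an explicit planar drawing built on top of a fixed planar embedding of $G$. First I would note that a $2$-clique of $G$ is exactly an edge of $G$, so $V(\sfTS_2(G)) = E(G)$, and that two edges $ax$ and $ay$ are adjacent in $\sfTS_2(G)$ precisely when they share an endpoint $a$ and the third edge $xy$ is present, i.e. when $\{a,x,y\}$ induces a triangle of $G$. Thus every edge of $\sfTS_2(G)$ can be \emph{charged} to a triangle of $G$ together with the common vertex of its two endpoints, and each triangle $axy$ contributes exactly three edges $v_{ax}v_{ay}$, $v_{ax}v_{xy}$, $v_{ay}v_{xy}$, charged respectively to the common vertices $a$, $x$, $y$ (here $v_e$ denotes the vertex of $\sfTS_2(G)$ corresponding to the edge $e$ of $G$).

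Next I would fix a planar embedding of $G$ and set up the drawing. Around each vertex $a$ of $G$ I draw a small, pairwise disjoint closed disk $D_a$; each edge $ax$ of $G$ then becomes a ``corridor'' joining $\partial D_a$ to $\partial D_x$ along the embedded edge, and I place the vertex $v_{ax}$ in the interior of this corridor. An edge of $\sfTS_2(G)$ charged to a vertex $a$, say $v_{ax}v_{ay}$ coming from the triangle $axy$, is routed by running from $v_{ax}$ up its corridor into $D_a$, crossing $D_a$ as a chord from the $ax$-corridor to the $ay$-corridor, and then down the $ay$-corridor to $v_{ay}$. By the charging observation, every edge of $\sfTS_2(G)$ is routed through exactly one disk, and within any single corridor every arc emanates from the one vertex $v_{ax}$ sitting in it, so two such arcs can only meet at their common endpoint. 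Since the corridors follow the pairwise non-crossing edges of $G$ and the disks are disjoint, the only remaining source of crossings is between two chords drawn inside the same disk $D_a$.

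The heart of the argument is therefore to show that the chords inside each disk $D_a$ can be drawn crossing-free. These chords are in bijection with the edges of the link $G[N_G(a)]$, and their endpoints sit on $\partial D_a$ in the cyclic order given by the rotation of $a$ in the fixed embedding; I claim no two of them interleave. Suppose $x_ix_j$ and $x_kx_l$ are link edges whose endpoints interleave around $\partial D_a$, and consider the cycle $C$ formed by the edges $ax_i$, $x_ix_j$, $x_ja$, which by the Jordan curve theorem splits the plane into two regions. Because $C$ passes through $a$ along $ax_i$ and $ax_j$, the two arcs into which $x_i$ and $x_j$ split the rotation at $a$ correspond to the two sides of $C$; every neighbour of $a$ in a given arc sends its edge to $a$ into the corresponding side and, by planarity of $G$, cannot cross $C$, hence lies strictly on that side. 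Interleaving then forces exactly one of $x_k,x_l$ strictly inside $C$ and the other strictly outside, so the edge $x_kx_l$ must cross $C$, contradicting planarity. Thus the link edges are pairwise non-interleaving, which is exactly the condition that lets them be realized as non-crossing chords of $D_a$ respecting the boundary order; doing this in every disk yields a crossing-free drawing of $\sfTS_2(G)$. I expect this interleaving/Jordan-curve step to be the main obstacle, as it is the only place where planarity of $G$ is genuinely used; the remainder is bookkeeping to confirm that each $\sfTS_2$-edge lives in a unique disk and that all corridor arcs share a common endpoint.
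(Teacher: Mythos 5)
Your proposal is correct and follows essentially the same route as the paper's proof: both place one new vertex on each edge of a fixed planar drawing of $G$ and route each edge of $\sfTS_2(G)$ through a small neighborhood of the triangle's shared vertex, with crossing-freeness resting on the fact that the angular sectors spanned by triangles at a common vertex are pairwise nested or disjoint (your non-interleaving claim, which is the paper's first appendix lemma). If anything, your disk-and-chords formulation with the Jordan-curve argument is a cleaner verification than the paper's explicit metric construction (choosing points within distance $d$ of the corner and angles exceeding $90^\circ$), which dismisses that laminarity lemma as ``trivial to see'' while you actually prove it.
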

\begin{proof}
	Let $G_p$ be a planar drawing of $G$.
	We construct a graph $H$ from $G_p$ as follows.
	\begin{enumerate}[(1)]
		\item For each edge $uv$ of $G_p$, replace it by the path $u\alpha_{uv}v$ where $\alpha_{uv}$ is a newly added vertex inside line $uv$. We use $\alpha_{vu}$ and $\alpha_{uv}$ to refer to the same point.
		\item For each triangle $uvw$, we draw the edge between pair $\alpha_{uv},\alpha_{uw}$ by a zigzag line that is very close to $\alpha_{uv}-u-\alpha_{uw}$. Similarly, draw the edge between pairs $\alpha_{uv},\alpha_{vw}$ and $\alpha_{uw},\alpha_{wv}$.
		\item Remove all vertices and edges of $G_p$.
	\end{enumerate}
	
	%% Add more explanation here
	
	Thus, $H$ is also a planar drawing (See \cref{app:detail_drawing_TS_2} for more details.).
	Additionally, since two size-$2$ cliques (i.e., edges) of $G$ are adjacent in $\sfTS_2(G)$ if and only if they belong to the same triangle, the bijective mapping $f: V(H) \to V(\sfTS_2(G))$ defined by $f(\alpha_{uv}) = \{u, v\}$ indeed satisfies $\alpha_{uv}\alpha_{xy} \in E(H)$ if and only if $f(\alpha_{uv})f(\alpha_{xy}) \in E(\sfTS_2(G))$.
	(Note that $V(H) = \{\alpha_{uv}: uv \in E(G)\}$.)
	Thus, $H \cong \sfTS_2(G)$ and therefore $\sfTS_2(G)$ is planar.
\end{proof}

To conclude our proof of \cref{thm:TS-planar}, we consider the case $k = 3$.
\begin{lemma}\label{lem:TS3-planar}
If a graph $G$ is planar, so is $\sfTS_3(G)$.
\end{lemma}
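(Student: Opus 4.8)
The plan is to mimic the explicit planar-drawing construction used for \cref{lem:TS2-planar}, but now introducing one new vertex for each triangle of $G$ rather than for each edge. First I would dispose of the degenerate cases: if $\omega(G) \le 2$ then $G$ has no triangle and $\sfTS_3(G)$ is empty, while if $\omega(G) = 3$ then by \cref{thm:clique-num-TSgraph}(b) the graph $\sfTS_3(G)$ has no edges; both are trivially planar. So I may assume $\omega(G) = 4$.

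The structural backbone is the observation that every edge of $\sfTS_3(G)$ lives inside a $K_4$ coming from $G$. Indeed, if $A = \{x,y,a\}$ and $B = \{x,y,b\}$ are adjacent then $ab \in E(G)$, so $A \cup B = \{x,y,a,b\}$ is a $K_4$ of $G$; its four $3$-subsets are triangles of $G$ and, being the $\Uni$-configuration of \cref{lem:clique-in-TSgraph} with $\Uni = A\cup B$, they induce a $K_4$ in $\sfTS_3(G)$ (a ``gadget''). Conversely each $K_4$ of $G$ yields such a gadget; since the $K_4$ of an edge $AB$ is recovered as $A\cup B$, distinct $K_4$'s of $G$ give \emph{edge-disjoint} gadgets. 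Using planarity of $G$, a fixed triangle $abc$ can be completed to a $K_4$ by at most one vertex inside and one vertex outside the closed curve $abc$, so each triangle lies in at most two $K_4$'s; hence each vertex of $\sfTS_3(G)$ belongs to at most two gadgets, any two of which meet in at most one vertex. Thus $\sfTS_3(G)$ is an edge-disjoint union of $K_4$-gadgets glued only at single shared vertices.

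The key realization for the drawing is that a gadget of the $K_4$ $\{a,b,c,d\}$ is exactly the planar dual of the $K_4$-subdrawing, whose four faces are the four triangles and any two of which share a $G$-edge. I would therefore fix a planar drawing $G_p$, place the node $\beta_T$ of each triangle $T$ on (or infinitesimally near) the curve bounding $T$, and route every gadget edge $\beta_{T_1}\beta_{T_2}$ inside a thin tube following the $G$-edge shared by $T_1$ and $T_2$. The only multiplicity to control is along a single edge $ab$: the gadget edges routed near $ab$ connect the nodes $\beta_{abc_i}$ over the common neighbours $c_i$ of $a,b$, with $\beta_{abc_i}\beta_{abc_j}$ present iff $c_ic_j \in E(G)$. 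Since $\omega(G) \le 4$ forbids a triangle among the $c_i$ and $G$ is planar, the graph $K_{2,r}+ab$ on $\{a,b,c_1,\dots,c_r\}$ forces the $c_i$ into a linear order in which only consecutive ones can be adjacent; placing the $\beta_{abc_i}$ along $ab$ in that order lets the (at most path-like) family of edges near $ab$ be drawn as pairwise non-crossing arcs.

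The hard part will be verifying global consistency: that the tubes along different $G$-edges, the little disks around the $\beta_T$'s, and the two gadgets meeting at a shared triangle can all be realized simultaneously without crossings. The crucial point is that a shared triangle $T = abc$ is completed on opposite sides of the curve $abc$ (one $K_4$ inside, one outside), so placing $\beta_T$ on the curve itself lets the inside gadget approach it from within and the outside gadget from without, keeping $\beta_T$ a genuine cut between the two gadgets; combined with the linear ordering along each edge and sufficiently thin tubes, no two routed arcs cross. As with \cref{lem:TS2-planar}, I expect to relegate the careful tube-and-ordering bookkeeping, together with a figure, to an appendix, the mathematical content being entirely contained in the dual-of-$K_4$ description of gadgets and the planarity-forced linear order of common neighbours.
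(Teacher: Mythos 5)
Your proposal is sound in its structural claims and takes a genuinely different route from the paper. The paper never draws $\sfTS_3(G)$ directly in the plane: instead it proves (in \cref{app:property_TJ_4}, via a minimal-area convex-hull argument) that for planar $G$ with $\omega(G)=4$ the graph $\sfTJ_4(G)$ is a \emph{forest} of maximum degree at most $4$ --- note that $\sfTJ_4(G)$ is exactly your ``gadget graph'', with one node per $K_4$ of $G$ and adjacency meaning a shared triangle --- and then feeds this forest into the reconstruction algorithm of \cref{prop:construct-TSk-1}, building $\sfTS_3(G)$ (up to isolated vertices) by induction, attaching one $K_4$-gadget at a time at a single shared vertex. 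Acyclicity is what makes the paper's induction trivial. You never establish acyclicity, and rightly sense that your combinatorial skeleton alone (edge-disjoint $K_4$-gadgets, pairwise sharing at most one vertex, each vertex in at most two gadgets) cannot suffice: one can glue five abstract $K_4$'s pairwise at ten distinct vertices, satisfying all three constraints, and contracting the gadgets exhibits a $K_5$-minor, so the resulting graph is nonplanar. Hence the entire burden of your proof falls on the geometric routing, which implicitly rules out such gluing patterns where the paper rules them out combinatorially. Your two routing ingredients are correct: the common neighbours $c_1,\dots,c_r$ of an edge $ab$ acquire a linear order from the cyclic order of the internally disjoint arcs $a$--$c_i$--$b$ together with $ab$, and $c_ic_j\in E(G)$ forces consecutivity (with $\omega(G)=4$ excluding triangles among the $c_i$, so the pattern in each tube is a disjoint union of paths); and a shared triangle is completed once inside and once outside its Jordan curve, letting the two gadgets approach $\beta_T$ from opposite sides. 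What each approach buys: the paper's yields the stronger and reusable fact that $\sfTJ_4(G)$ is a forest, at the price of the $\sfTJ$/algorithm machinery; yours is self-contained and directly generalizes \cref{lem:TS2-planar}, at the price of the deferred ``tube-and-ordering'' verification, which is strictly heavier than the paper's $\sfTS_2$ appendix: since $\beta_T$ cannot lie on all three sides of its triangle at once, up to six arcs per node must travel along the triangle's boundary and turn corners at the vertices of $G$, where a nesting-of-angles argument (as in \cref{app:detail_drawing_TS_2}) is needed and is only gestured at in your sketch. I believe that verification can be completed, but as written it is the one place where your proof is a plan rather than an argument.
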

\begin{proof}
Here we can assume that $\omega(G)=4$.
Our first step is to prove that $\sfTJ_4(G)$ is an acyclic graph with maximum degree at most $4$ (see details in $\ref{app:property_TJ_4}$).
From the algorithm in \cref{prop:construct-TSk-1}, with the input graph $\sfTJ_4(G)$ (which is acyclic and has maximum degree at most $4$) and $k=4$, the output graph has all edges of $\sfTS_3(G)$ and isolated vertices. 
Thus, it is sufficient to show that running the algorithm for an acyclic graph with maximum degree at most $4$ and $k=4$ (i.e., the $\sfTJ_4(G)$ graph) will produce a planar graph. 

We prove this by induction on the number of vertices in the input graph.
Let $T=(V,E)$ be the input graph then $T$ will pass the first algorithm since for any vertex $X$ in $T$, because $deg(X)\leq 4$, we can put each element in $N_T(X)$ into a different set $S_i(X)$ ($1\leq i \leq 4$) and leave the remaining sets empty. 
Now we prove that $T$ will produce a planar graph in the second algorithm by induction on $|V|$:
\begin{itemize}
	\item $|V|=1$ then output will be a clique $K_4$.
	\item Assume that our hypothesis satisfies with any graph $T$ having vertices $<n$ vertices.
	We prove that any graph $T$ with $n$ vertices also satisfies the hypothesis. 
	For graph $T$ with no cycle, there is a vertex $X$ with a degree lower than $2$. Let $H$ be the graph $T$ without $X$ then $H$ is also a forest with maximum degree $\leq 4$. Let $P$ be the graph obtained by applying the algorithm with $H$. We know that $P$ is planar by induction hypothesis.
		\begin{itemize}
		\item If $deg(X)=0$ in $T$. Then applying the algorithm with $T$ will create $P$ and another $K_4$ with no edge between them. Then because $P, K_4$ are planar, we will also have a planar graph.
		\item If $deg(X)=1$ let $Y$ be the vertex that is adjacent with $X$ and let $Q$ be the graph obtained by applying algorithm for $T$.
		
		Compared to $H$, one of the empty sets $S_i(Y)$ now has $X$, therefore, the set $M_i(Y)$ changes from $\{Y\}$ to $\{Y,X\}$. 
		In addition, the sets $M_i(X)$ for $1 \le i \le 4$ are
$\{\{Y, X\}, \{X\}, \{X\}, \{X\}\}$, which means that we can obtain $Q$ by this: change one of the vertices in $P$ that has its label is $\{Y\}$ changed it to $\{Y,X\}$ and then add a $K_3$ that has label $\{X\}$ and connect them to $\{Y,X\}$. 

		This makes $Q$ still planar because we can draw this $K_3$  small in an angle of center $\{Y,X\}$ that does not contain any edge connecting this vertex and connect all the vertices of $K_3$ to it.
	\end{itemize}
\end{itemize}

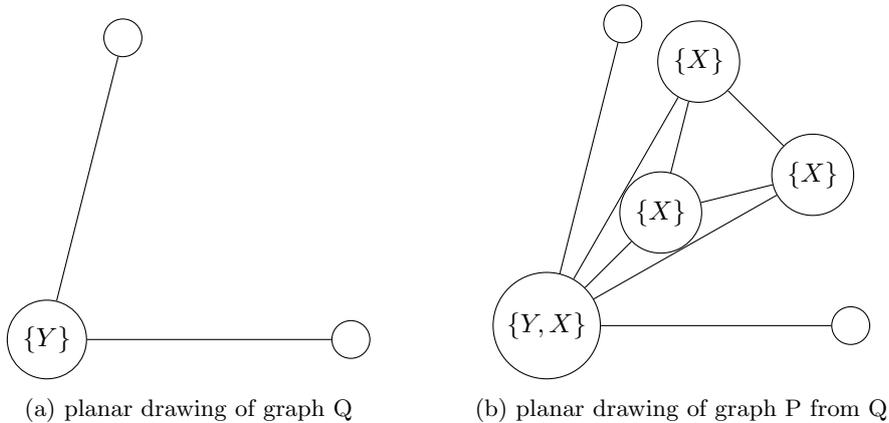
\begin{figure}[ht]
    \centering
    \begin{subfigure}[b]{0.4\textwidth}
    \centering
	\begin{adjustbox}{max width=\textwidth}
    \begin{tikzpicture}[every node/.style = {circle, draw, fill=white, minimum size=0.5cm}]
      \foreach \l/\x/\y/\c in {Y/0/0/{\{Y\}}, A/1/4/{}, B/4/0/{}}
      {
        \node (\l) at (\x, \y) {$\c$};
      }
      \draw (B)--(Y)--(A);
    \end{tikzpicture}
	\end{adjustbox}
    \caption{planar drawing of graph Q}
    \end{subfigure}
    \begin{subfigure}[b]{0.4\textwidth}
    \centering
	\begin{adjustbox}{max width=\textwidth}
    \begin{tikzpicture}[every node/.style = {circle, draw, fill=white, minimum size=0.5cm}]
      \foreach \l/\x/\y/\c in {Y/0/0/{\{Y,X\}}, A/1/4/{}, B/4/0/{}, X1/1.5/1.5/{\{X\}},X2/2/3.5/{\{X\}},X3/3.5/2/{\{X\}}}
      {
        \node (\l) at (\x, \y) {$\c$};
      }
      \draw (B)--(Y)--(A);
      \draw (X1)--(X2)--(X3)--(Y)--(X1)--(X3);
      \draw (X2)--(Y);
    \end{tikzpicture}
	\end{adjustbox}
    \caption{planar  drawing of graph P from Q}
    \end{subfigure}
	\caption{Illustrating Lemma \ref{lem:TS3-planar}.}
\end{figure}

\begin{figure}[ht]
    \centering
    \begin{subfigure}[b]{0.4\textwidth}
    \centering
	\begin{adjustbox}{max width=\textwidth}
    \begin{tikzpicture}[every node/.style = {circle, draw, fill=white, minimum size=0.5cm}]
      \foreach \l/\x/\y/\c in { Y/1/1/{Y}, A/1/3/{A}, B/3/1/{B} , C/2/2/{C}}
      {
        \node (\l) at (\x, \y) {$\c$};
      }
      
      \draw (Y)--(A)--(C);
      \draw (Y)--(B);
    \end{tikzpicture}
	\end{adjustbox}
    \caption{graph H}
    \end{subfigure}
    \begin{subfigure}[b]{0.4\textwidth}
    \centering
	\begin{adjustbox}{max width=\textwidth}
    \begin{tikzpicture}[every node/.style = {circle, draw, fill=white, minimum size=0.5cm}]
      \foreach \l/\x/\y/\c in {C1/4/4/{\{C\}}, C2/6/3/{\{C\}}, C3/3/6/{\{C\}}, AC/3/3/{\{A,C\}}, A1/1/3/{\{A\}}, AY/0/0/{\{A,Y\}}, A2/0/5/{\{A\}} , YB/5/0/{\{Y,B\}}, Y1/3.5/-2/\{Y\}, Y2/5/-5/{\{Y\}}, B1/7/0/{\{B\}},B2/8/2/{\{B\}},B3/8/-2/{\{B\}}}
      {
        \node (\l) at (\x, \y) {$\c$};
      }
      \draw (C1)--(C2)--(C3)--(AC)--(C1)--(C3);
      \draw (C2)--(AC);
      \draw (A1)--(A2)--(AY)--(AC)--(A1)--(AY);
      \draw (A2)--(AC);
      \draw (Y1)--(YB)--(AY)--(Y2)--(Y1)--(AY);
      \draw (YB)--(Y2);
      \draw (B1)--(B2)--(B3)--(YB)--(B1)--(B3);
      \draw (YB)--(B2);
    \end{tikzpicture}
	\end{adjustbox}
    \caption{graph obtained from H after applying the algorithm}
    \end{subfigure}
	\caption{Illustrating Lemma \ref{lem:TS3-planar}.}
\end{figure}

\newpage
\begin{figure}[hbt!]
    \centering
    \begin{subfigure}[b]{0.4\textwidth}
    \centering
    \begin{tikzpicture}[every node/.style = {circle, draw, fill=white, minimum size=0.5cm}]
      \foreach \l/\x/\y/\c in {X/0/0/{X}, Y/1/1/{Y}, A/1/3/{A}, B/3/1/{B} , C/2/2/{C}}
      {
        \node (\l) at (\x, \y) {$\c$};
      }
      \draw (X)--(Y);
      \draw (Y)--(A)--(C);
      \draw (Y)--(B);
    \end{tikzpicture}
    \caption{graph T}
    \end{subfigure}
    \begin{subfigure}[b]{0.4\textwidth}
    \centering
	\begin{adjustbox}{max width=\textwidth}
    \begin{tikzpicture}[every node/.style = {circle, draw, fill=white, minimum size=0.5cm}]
      \foreach \l/\x/\y/\c in {C1/4/4/{\{C\}}, C2/6/3/{\{C\}}, C3/3/6/{\{C\}}, AC/3/3/{\{A,C\}}, A1/1/3/{\{A\}}, AY/0/0/{\{A,Y\}}, A2/0/5/{\{A\}} , YB/5/0/{\{Y,B\}}, Y1/3.5/-2/\{Y\}, YX/5/-5/{\{Y,X\}}, B1/7/0/{\{B\}},B2/8/2/{\{B\}},B3/8/-2/{\{B\}}, X1/3/-5/{\{X\}}, X2/1/-3/{\{X\}},X3/2/-6/{\{X\}}}
      {
        \node (\l) at (\x, \y) {$\c$};
      }
      \draw (C1)--(C2)--(C3)--(AC)--(C1)--(C3);
      \draw (C2)--(AC);
      \draw (A1)--(A2)--(AY)--(AC)--(A1)--(AY);
      \draw (A2)--(AC);
      \draw (Y1)--(YB)--(AY)--(YX)--(Y1)--(AY);
      \draw (YB)--(YX);
      \draw (B1)--(B2)--(B3)--(YB)--(B1)--(B3);
      \draw (YB)--(B2);
       \draw (X1)--(X2)--(X3)--(YX)--(X1)--(X3);
      \draw (X2)--(YX);
    \end{tikzpicture}
	\end{adjustbox}
    \caption{graph obtained from T after applying the algorithm}
    \end{subfigure}
	\caption{Illustrating Lemma \ref{lem:TS3-planar}.}
\end{figure}
\end{proof}

\newpage

\begin{corollary}
    For a planar graph $G=(V,E)$, let $F_3,F_4$ be the number of $K_3,K_4$ in graph $G$ then we have $F_3\leq |E|-2$ and $ 2\cdot F_4 \leq F_3-2$. Moreover, $F_3\leq 3\cdot|V|-8$.
\end{corollary}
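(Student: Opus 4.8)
The plan is to read both inequalities as instances of the standard planar edge bound $|E(H)| \le 3|V(H)| - 6$ (valid for simple planar $H$ with at least three vertices) applied to the two planar reconfiguration graphs $\sfTS_2(G)$ and $\sfTS_3(G)$ supplied by \cref{lem:TS2-planar} and \cref{lem:TS3-planar}. The whole argument reduces to counting the vertices and edges of these two graphs in terms of $F_3$ and $F_4$, so that a bound on $|E(\sfTS_k(G))|$ translates directly into the desired inequality.

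For the first inequality I would work with $\sfTS_2(G)$. Its vertices are the $2$-cliques (edges) of $G$, so $|V(\sfTS_2(G))| = |E|$. Two such vertices are adjacent exactly when the corresponding edges lie in a common triangle of $G$; indeed, by the case $n = 3$ of \cref{lem:clique-in-TSgraph}, the three edges of each triangle of $G$ form a triangle in $\sfTS_2(G)$, and every edge of $\sfTS_2(G)$ arises from a unique triangle of $G$ (recovered as the union of its two endpoints). Hence $|E(\sfTS_2(G))| = 3F_3$. Since $\sfTS_2(G)$ is planar, the bound $3F_3 \le 3|E| - 6$ yields $F_3 \le |E| - 2$.

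For the second inequality I would use $\sfTS_3(G)$, which is legitimate because a planar $G$ contains no $K_5$ and so $\omega(G) \le 4$, matching the hypothesis of \cref{lem:TS3-planar}. Here the vertices are the triangles of $G$, so $|V(\sfTS_3(G))| = F_3$; two triangles are $\sfTS$-adjacent iff they share an edge and their symmetric-difference vertices are adjacent, which forces their union to be a $K_4$. Each $K_4$ of $G$ therefore contributes all $\binom{4}{2} = 6$ adjacencies among its four triangle-subsets, and every edge of $\sfTS_3(G)$ comes from the unique $K_4$ equal to the union of its two endpoints, giving $|E(\sfTS_3(G))| = 6F_4$. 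Planarity then gives $6F_4 \le 3F_3 - 6$, i.e. $2F_4 \le F_3 - 2$.

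The routine work consists of the two edge counts, and the only point demanding care is the degenerate range of the planar bound, which needs at least three vertices. I would therefore apply it only when $|E| \ge 3$ (respectively $F_3 \ge 3$) and dispose of the remaining small cases directly, these being the trivial situations in which the associated $\sfTS_k$-graph has fewer than three vertices. The main conceptual step---identifying triangles of $\sfTS_2(G)$ with triangles of $G$, and edges of $\sfTS_3(G)$ with $K_4$'s of $G$---is exactly what \cref{lem:clique-in-TSgraph} provides, so I do not expect any genuine obstacle beyond this bookkeeping.
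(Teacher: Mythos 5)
Your proposal is correct and takes essentially the same route as the paper, whose entire proof is to apply the planar bound $|E_T|\leq 3|V_T|-6$ to the planar graphs $\sfTS_2(G)$ and $\sfTS_3(G)$ (\cref{lem:TS2-planar}, \cref{lem:TS3-planar}); you merely supply the vertex and edge counts $|V(\sfTS_2(G))|=|E|$, $|E(\sfTS_2(G))|=3F_3$, $|V(\sfTS_3(G))|=F_3$, $|E(\sfTS_3(G))|=6F_4$ that the paper leaves implicit, and these counts are right. One small caution about your final remark: the degenerate cases cannot all be ``disposed of directly,'' since for instance a planar graph with $|E|=1$ has $F_3=0>|E|-2$, so the stated inequality is literally false there---a defect shared by the paper's own statement, which tacitly assumes the relevant $\sfTS_k$-graphs have at least three vertices.
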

\begin{proof}
    For a planar graph $T=(V_T,E_T)$, the bound for the number of edge is : $|E_T|\leq 3\dot|V_T|-6$. Apply this bound to $TS_2(G)$ and $TS_3(G)$. Finally, apply the bound to $G$ to get $F_3\leq 3\cdot|V|-8$.
\end{proof}
% \newpage
\section{Some Well-Known Properties of Simplex Graphs}
\label{sec:simplex}

In this section, we mention some well-known properties of simplex graphs ($\sfTAR$-graphs). More details can be found in~\cite{BandeltV89,KlavzarM02,CabelloEK11,ScharpfeneckerT16}. Recall that a graph $G$ is called a \textit{median graph} if every triple $a, b, c \in V(G)$ of vertices has a unique \textit{median} $m(a, b, c)$---a vertex that belongs to shortest paths between each pair of $a$, $b$, and $c$.

\begin{proposition}[\cite{BandeltV89}]\label{prop:simplex-is-median}
	Any simplex graph is a median graph.
\end{proposition}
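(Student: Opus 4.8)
The plan is to show that the simplex graph (that is, $\TAR{}{G}$) of any graph $G$ is a median graph by exhibiting, for every triple of cliques, a unique median. The natural candidate for the median of three cliques $A, B, C$ of $G$ is the \emph{majority set}
\[
m(A, B, C) = \{v \in V(G) : v \text{ belongs to at least two of } A, B, C\},
\]
which is the standard median formula for subsets ordered by symmetric-difference distance. First I would recall that in $\TAR{}{G}$ two cliques are adjacent exactly when their symmetric difference has size one, so the graph distance between cliques $X$ and $Y$ equals $|X \Delta Y|$ provided every set along a shortest ``bit-flipping'' path is again a clique. This is the crucial point where the clique structure (as opposed to arbitrary vertex subsets) must be used.

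The key steps, in order, are as follows. I would first verify that the distance in $\TAR{}{G}$ between two cliques $X$ and $Y$ is exactly $|X \Delta Y|$. The lower bound is immediate since each $\sfTAR$-move changes the symmetric difference by one. For the upper bound I must show one can get from $X$ to $Y$ by single additions/removals while keeping every intermediate set a clique: removing the elements of $X \setminus Y$ first (a clique stays a clique under deletions) and then adding the elements of $Y \setminus X$ one by one (each added vertex is adjacent to all of $Y$, hence to all currently present vertices, since the current set is a subset of $Y$) accomplishes this. Second, I would check that the candidate median $m(A,B,C)$ is itself a clique: any two of its elements lie together in at least one of $A$, $B$, $C$ (by a pigeonhole argument on the three ``at-least-two'' memberships), hence are adjacent in $G$. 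Third, I would verify the median property, namely that $m(A,B,C)$ lies on a shortest path between each pair; for subsets this is the familiar identity
\[
|A \Delta B| = |A \Delta m| + |m \Delta B|,
\]
which holds coordinatewise (element by element) and therefore for the whole sets. Finally, uniqueness follows because any vertex $z$ simultaneously on shortest paths between all three pairs must, coordinatewise, agree with the majority in each element, forcing $z = m(A,B,C)$.

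The main obstacle I expect is the second step: showing that the majority set $m(A,B,C)$ is always a clique of $G$. Unlike the general-subset setting, here the median must be a legitimate vertex of $\TAR{}{G}$, so cliquehood cannot be taken for granted and genuinely relies on the pairwise-adjacency structure. The pigeonhole argument resolves it cleanly: for distinct $u, v \in m(A,B,C)$, each of $u$ and $v$ belongs to at least two of the three cliques, so by inclusion–exclusion among three two-element choices they share membership in a common clique, which forces $uv \in E(G)$. Once this is established, the remaining verifications reduce to the standard coordinatewise computations on symmetric differences and are routine. Since the existence of the median on shortest paths together with its uniqueness is exactly the defining property of a median graph, this completes the proof that $\TAR{}{G}$ is a median graph.
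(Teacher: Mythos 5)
Your proof is correct, but note that there is nothing in the paper to compare it against: the proposition is stated with a citation to Bandelt and van de Vel~\cite{BandeltV89} and no in-paper proof is given, so your argument supplies a self-contained derivation of a deferred result. All of your steps check out. The distance formula $d(X,Y)=|X\,\Delta\,Y|$ in $\sfTAR(G)$ holds because your ordering (first delete $X\setminus Y$, then add $Y\setminus X$) keeps every intermediate set a clique --- subsets of cliques are cliques, and each added vertex lies in $Y$ together with the current set --- and this same computation gives the connectivity of $\sfTAR(G)$, which is implicitly needed for the median-graph definition. The cliquehood of the majority set is exactly the right crux, and your pigeonhole argument is sound: two subsets of $\{A,B,C\}$ each of size at least two must intersect, so any two distinct vertices of $m(A,B,C)$ share a common clique and are hence adjacent. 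The geodesic identity $|A\,\Delta\,B| = |A\,\Delta\,m| + |m\,\Delta\,B|$ and the uniqueness argument are correct coordinatewise verifications over $\{0,1\}$; in the uniqueness step, equality in the per-coordinate triangle inequality forces any median $z$ to agree with the two-out-of-three majority at every vertex, so $z = m(A,B,C)$. In effect you are re-deriving the classical fact that a connected induced subgraph of a hypercube is a median graph if and only if it is closed under coordinatewise majority, and then verifying majority-closedness for the family of cliques; this is essentially the route taken in the cited literature, so your proof is a legitimate (and welcome) expansion of the citation rather than a divergence from it. One small presentational point: you should say explicitly that $m(A,B,C)$ may be the empty set and that this is still a vertex of $\sfTAR(G)$, which is fine here because the paper regards $\emptyset$ as a clique of size $0$.
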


It is well-known that any median graph is bipartite and by \cref{prop:simplex-is-median} so is any simplex graph. 

Recall that the join $G \oplus H$ of two graphs $G$ and $H$ is the graph obtained from the disjoint union of $G$ and $H$ by joining every vertex of $G$ with every vertex of $H$.
The Cartesian product of two graphs $G$ and $H$, denoted by $G \square H$, is the graph with vertex-set $V(G) \times V(H)$ and $(a, x)(b, y) \in E(G \square H)$ whenever either $ab \in E(G)$ and $x = y$ or $a = b$ and $xy \in E(H)$.

\begin{proposition}[\cite{KlavzarM02}]
	Let $G$ and $H$ be two disjoint graphs.
	Then, $\sfTAR(G \oplus H) = \sfTAR(G) \square \sfTAR(H)$.
	In other words, the Cartesian product of two simplex graphs is also a simple graph.
\end{proposition}

The following proposition involves the simplex graphs of some simple graphs.
Recall that a \textit{gear graph} (also known as \textit{bipartite wheel graph}) is a graph obtained by inserting an extra vertex between each pair of adjacent vertices on the perimeter of a wheel graph.
A \textit{Fibonacci cube}~\cite{KlavzarM02} $\Gamma_n$ is a subgraph of a hypercube $Q_n$ induced by vertices (binary strings of length $n$) not having two consecutive $1$s.
\begin{proposition}
	\begin{enumerate}[(a)]
		\item Simplex graph of a complete graph is a hypercube.
		\item Simplex graph of a cycle of length $n \geq 4$ is a gear graph.
		\item Simplex graph of the complement of a path is a Fibonacci cube.
	\end{enumerate}
\end{proposition}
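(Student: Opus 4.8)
The plan is to treat all three parts by the same two-step recipe: first write down explicitly the set of cliques of the base graph, and then translate the $\sfTAR$-adjacency (two cliques differ by adding or removing a single vertex) into the adjacency of the claimed target graph. For (a), observe that in $K_n$ every vertex subset is a clique, so the nodes of the simplex graph $\TAR{}{K_n}$ are precisely all $2^n$ subsets of $V(K_n)$. Encoding each subset by its indicator string in $\{0,1\}^n$, adding or removing a single vertex is exactly flipping a single coordinate, which is the defining adjacency of the hypercube. Hence $\TAR{}{K_n} \cong Q_n$, which is immediate once the encoding is set up.

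For (b), I would use that $C_n$ with $n \geq 4$ is triangle-free, so its only cliques are $\emptyset$, the $n$ singletons $\{v_i\}$, and the $n$ edges $\{v_i, v_{i+1}\}$ (indices taken cyclically), giving $2n+1$ nodes in $\TAR{}{C_n}$. I then record the adjacencies: $\emptyset$ is adjacent to every singleton (degree $n$); each singleton $\{v_i\}$ is adjacent to $\emptyset$ and to the two edges containing $v_i$ (degree $3$); and each edge $\{v_i, v_{i+1}\}$ is adjacent only to its two endpoint singletons, since no third vertex can be added without creating a triangle (degree $2$). Mapping $\emptyset$ to the hub, each singleton to an outer rim vertex, and each edge-clique to the vertex inserted between the two corresponding rim vertices yields exactly the gear graph on $2n+1$ vertices. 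The point to verify carefully is that $\{v_i,v_{i+1}\}$ is adjacent precisely to $\{v_i\}$ and $\{v_{i+1}\}$, so singletons and edge-cliques alternate around a $2n$-cycle, which is exactly the subdivided rim of the wheel.

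For (c), write $P_n$ for the path on $v_1,\dots,v_n$. A set $S$ is a clique of $\overline{P_n}$ iff its members are pairwise non-adjacent in $P_n$, i.e.\ iff $S$ is an independent set of $P_n$; equivalently, the indicator string of $S$ in $\{0,1\}^n$ has no two consecutive $1$s. These are precisely the vertices of the Fibonacci cube $\Gamma_n$. As in (a), adding or removing one vertex flips exactly one coordinate, so two cliques are $\sfTAR$-adjacent iff their indicator strings differ in a single bit; since both endpoints are required to be cliques (valid strings with no two consecutive $1$s), this is exactly the adjacency of the subgraph of $Q_n$ induced by such strings, namely $\Gamma_n$. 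Hence $\TAR{}{\overline{P_n}} \cong \Gamma_n$.

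The routine content throughout is clique enumeration and single-bit bookkeeping, all elementary; the only place that needs genuine care is (b), where one must correctly match the hub/rim/inserted-vertex structure and degrees of the gear graph to the three clique types and check the cyclic alternation, together with being consistent about conventions (that $\emptyset$ counts as a size-$0$ clique and that the gear graph on $2n+1$ vertices is the intended target). As a sanity check, all three target graphs are bipartite, consistent with the fact established earlier that every simplex graph is a median graph and hence bipartite.
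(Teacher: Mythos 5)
Your proposal is correct, and for part (a) it coincides with the paper's argument (indicator-string bijection onto $Q_n$, single $\sfTAR$-move $=$ single bit flip). The differences are in (b) and (c). For (b) the paper counts that $\sfTAR(C_n)$ has $2n+1$ vertices and $3n$ edges, matching a gear graph, and then sketches the map sending $\emptyset$ to the hub and the $1$- and $2$-cliques to the rim; equal vertex and edge counts alone do not establish an isomorphism, so your version --- which pins down the full adjacency structure (degree $n$ hub, degree-$3$ singletons, degree-$2$ edge-cliques) and verifies the cyclic alternation of singletons and edge-cliques around the subdivided rim --- is the more complete argument, supplying exactly the verification the paper leaves implicit. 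For (c) the paper does not prove the statement at all but cites Klav\v{z}ar--Mollard, whereas you give a short self-contained proof: cliques of $\overline{P_n}$ are independent sets of $P_n$, i.e.\ binary strings with no two consecutive $1$s, and the $\sfTAR$-adjacency is single-bit flipping restricted to such strings, which is by definition the Fibonacci cube $\Gamma_n$ as an induced subgraph of $Q_n$. So your route buys a uniform, fully explicit treatment of all three parts (and your closing bipartiteness sanity check via the median-graph property is consistent with the paper's \cref{prop:simplex-is-median}), at the modest cost of redoing a fact the paper outsources to the literature.
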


\begin{proof}
    \begin{enumerate}[(a)]
        \item Given a complete graph $K_n$, every subsets of the vertex set $\{v_1,v_2,...,v_n\}$ of $K_n$ forms a clique, including from 0-clique, which is the empty set, to $n-$clique. Hence, we can see that the simplex graph of $K_n$ has $2^n$ vertices. Similar to the hypercube $Q_n$, the vertices of $Q_n$ can be represented by binary strings of length $n$, which correspond to the $2^n$ subsets of the vertex set of $Q_n$. Then, we define a bijection
        \begin{align*}
        f:V(\sfTAR(K_n)) &\rightarrow V(Q_n)\\
        C &\mapsto f(C), \quad \text{ where $C$ is a clique in $K_n$}.
    \end{align*}
    It suffices to show that $f$ is an isomorphism since for all $C\in V(K_n)$, $f(C)$ is the corresponding binary string of length $n$, where the $i$th bit 1 if the $i$th vertex is included in $C$, and 0 otherwise. Furthermore, two cliques in the simplex graph of $K_n$ are adjacent if they differ by one element, corresponding to $Q_n$ since two vertices in $Q_n$ are adjacent if their binary strings differ by one bit. Then we can conclude that $K_n$ is isomorphic the hypercube $Q_n$.

    \item Given a cycle graph $C_n,n\geq 4$. Since the cliques in $C_n$ include all subsets of vertices that form complete subgraphs, we can conclude that the simplex graph of $C_n$ only has 0-clique, which is the empty set, 1-cliques, and 2-cliques. We can see that the number of 1-cliques and the number of 2-cliques in $C_n$ is equal to $n$, because the number of vertices in $C_n$ is equal to the number of edges. Together with the empty set, the total number of vertices in the simplex graph of $C_n$ is $2n+1$. Then, every 1-cliques is adjacent with the empty set, and each 2-cliques is adjacent with 2 1-cliques which are adjacent in $C_n$, hence the number of edges in the simplex graph of $C_n$ is $3n$. Since the simplex graph of $C_n$ has the equivalent number of vertices and edges to a general gear graph, we can define a bijection
    \begin{align*}
        f:V(\sfTAR(C_n)) &\rightarrow V(G_n), \quad \text{ where $G_n$ is the gear graph.}
    \end{align*}
    It suffices to show that $f$ is an isomorphism since for any 1-clique and 2-clique vertices in the $\sfTAR(C_n)$ can be seen as the outer cycle of the gear graph $G_n$, and the empty set vertex is the central vertex of $G_n$. Besides, each 1-clique vertex in $\sfTAR(C_n)$ is connected to the empty set vertex, and each 2-clique vertex is connected to 1-clique vertices, which form a gear graph $G_n$.

    \item A proof of this statement appeared in~\cite{KlavzarM02}. 
	% Given a path $P_n$, then in the complement of $\overline{P_n}$, two vertices are adjacent if they are not adjacent in $P_n$. We can see that for $n\geq3$, $v_1$ and $v_n$ are adjacent since they are endpoints in $P_n$, and any non-consecutive vertices in $P_n$ are adjacent in $\overline{P_n}$. Besides, the Fibonacci cube $\Gamma_n$ is a subgraph of the hypercube $Q_n$ induced by verticces (binary strings) not having two consecutive 1s. 
    
	% The simplex graph of $\overline{P_n}$ include empty set vertex which is 0-clique, 1-clique vertices and any larger clique vertices where vertices in that clique are not consecutive in $P_n$, while vertices in $\Gamma_n$ correspond to binary strings of length $n$ without consecutive 1s, and any vertices in $\Gamma_n$ that are adjacent differ exactly one bit. 

	% Then, we can conclude that the simplex graph of $\overline{P_n}$ and $\Gamma_n$ are isomorphic, since each clique in $\overline{P_n}$ corresponds to a binary strings where 1 defines the inclusion of a vertex, and 0 defines its exclusion. 
    \end{enumerate} 
\end{proof}

\section*{Acknowledgment}
This work began during the 2023--2024 edition of the Vietnam Polymath REU (VPR) program.  
We thank the organizers and participants of VPR for creating a fruitful research environment.
Duc A. Hoang's research was partially supported by the Vietnam Institute for Advanced Study in Mathematics (VIASM), and the Vietnam National University, Hanoi under the project QG.25.07 ``A study on reconfiguration problems from algorithmic and graph-theoretic perspectives''. Finally, we would like to thank the anonymous reviewers of the International Conference on Algorithms and Discrete Applied Mathematics for their valuable comments and suggestions that helped improve this paper.

%Duc A. Hoang's research was supported by 

%\clearpage
%\bibliography{refs.bib}
\printbibliography

\appendix

\section{Planar Drawing of $\sfTS_2(G)$ in \cref{lem:TS2-planar}}\label{app:detail_drawing_TS_2}

In this section, we describe a detailed construction of a planar drawing of the graph $TS_2(G)$, where $G$ is any given planar graph. (\cref{lem:TS2-planar}.)
\begin{tcolorbox}[title=Main idea for drawing edges for graph $TS_2(G)$]
Let $G_p$ be a planar drawing of $G$.
Each edge in $TS_2(G)$ has the form $\alpha_{uv}\alpha_{uw}$ where $uv, uw, vw$ are edges of $G_p$. We add a new vertex $\alpha_{u,vw}(\neq u)$ lie in side $\triangle uvw$ and close enough to $u$ and draw the edge from $\alpha_{uv}$ to $\alpha_{u,vw}$ and to $\alpha_{uw}$(we can imagine the line is drawn really close and along the edges $uv,uw$)
\end{tcolorbox}
The drawing based on these two lemma for the planar drawing $G_p$.
\begin{lemma}
    If there are 2 triangles $\triangle ABC$ and $\triangle ADE$ ($B,C,D,E$ are not necessarily different) on $G_p$ then $\angle{BAC},\angle{DAE}$ do not intersect or one of them has to contain the other.
\end{lemma}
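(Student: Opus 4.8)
The plan is to translate the geometric claim into a combinatorial statement about the cyclic (rotational) order of edges around $A$ and then rule out the only bad configuration by a Jordan-curve argument.

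\emph{Step 1 (reformulation).} First I would fix a small circle $\sigma$ centered at $A$, of radius small enough that $\sigma$ together with its closed disk meets $G_p$ only in $A$ and the initial sub-arcs of the edges incident to $A$; each such edge meets $\sigma$ in a single point. The interior of $\triangle ABC$ (the bounded region cut out by the Jordan curve $AB\cup BC\cup CA$) intersects $\sigma$ in a sub-arc whose endpoints are the crossing points of $AB$ and $AC$, and this arc is exactly the wedge $\angle BAC$; similarly $\angle DAE$ becomes an arc with endpoints the crossing points of $AD$ and $AE$. Two arcs on a circle are either disjoint, nested, or \emph{interleaved} (their endpoints alternate), and the wedges $\angle BAC,\angle DAE$ are nested or disjoint precisely when the endpoint pairs $\{AB,AC\}$ and $\{AD,AE\}$ do \emph{not} interleave. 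So it suffices to exclude the interleaved case.

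\emph{Step 2 (the contradiction).} Assume the endpoints interleave. Then exactly one of the rays $AD,AE$ points strictly into the wedge $\angle BAC$ and the other strictly into its complement; relabel so that $AD$ points into $\angle BAC$ and $AE$ into the complement. I would first argue that $D$ lies in the open interior of $\triangle ABC$: the edge $AD$ starts at $A$ heading into the interior, and being a single edge it can meet $AB$ and $CA$ only at the shared endpoint $A$, so if it ever left the interior it would have to cross the opposite side $BC$; since strict interleaving forces $D\notin\{B,C\}$, the edges $AD$ and $BC$ share no endpoint and cannot cross in the planar drawing $G_p$. Hence $AD$ stays inside, and as $D\notin\{A,B,C\}$ is a vertex off the boundary, $D$ is an interior point. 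The symmetric argument shows $E$ lies in the open exterior. But then the side $DE$ of $\triangle ADE$ joins an interior point to an exterior point and must cross the Jordan curve $AB\cup BC\cup CA$; as $D,E\notin\{A,B,C\}$, the edge $DE$ shares no endpoint with any of $AB,BC,CA$, so this crossing contradicts the planarity of $G_p$, completing the proof.

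\emph{Main obstacle.} The real work is not the crossing count but the bookkeeping of the degeneracies permitted by the hypothesis that $B,C,D,E$ need not be distinct. The key observation that dissolves them is that a genuine interleaving of the two endpoint pairs forces all four directions, hence all four vertices, to be distinct and different from $A$: if two of the relevant vertices coincided, the two pairs would share an endpoint on $\sigma$ and could only be nested or disjoint, never interleaved. Making this precise, together with justifying the existence of the separating circle $\sigma$ via the locally star-like structure of a planar drawing at a vertex, is where the care is needed; once it is in place, every crossing argument above applies to genuinely distinct, non-incident edges.
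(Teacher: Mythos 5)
Your proposal is correct, but there is nothing in the paper to compare it against: the authors dispose of this lemma with the single sentence ``The proof of the first lemma is trivial to see,'' so your write-up is in effect the missing proof rather than an alternative to an existing one. Your route --- passing to a small circle $\sigma$ around $A$, identifying the wedges $\angle BAC$ and $\angle DAE$ with arcs of length less than a half-circle, observing that two such arcs are interior-disjoint, nested, or have alternating endpoints, and then killing the alternating case by showing $D$ lies strictly inside and $E$ strictly outside the Jordan curve $AB\cup BC\cup CA$, so that the edge $DE$ must cross a triangle side with which it shares no endpoint --- is sound, and your observation that alternation forces the four rays, hence the four vertices, to be pairwise distinct is exactly the point that neutralizes the ``$B,C,D,E$ not necessarily different'' degeneracies. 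Three small remarks. First, like the paper you implicitly assume the drawing $G_p$ is straight-line (angles, rays, and sides as segments); this is consistent with the rest of the appendix, which computes with segments, distances, and feet of altitudes, and is harmless by F\'ary's theorem, but it deserves a sentence. Second, your crossing arguments quietly use the standard conventions that edges of a drawing meet only at shared endpoints and that no edge passes through a vertex; e.g.\ the case where $D$ lands on the open segment $BC$ is excluded by the latter convention, not by a crossing. Third, your analysis exposes that the lemma as literally stated is slightly off in the shared-ray case (say $D=B$ with $E$ on the far side of the line $AB$): the wedges then intersect along the common ray $AB$ yet neither contains the other, so ``does not intersect'' must be read as ``have disjoint interiors''; your arc trichotomy handles this correctly, and that laminar-interior reading is all that the paper's second lemma and the drawing procedure actually use. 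For $E$ one can also argue more directly than by symmetry: a straight segment emanating from the apex $A$ in a direction strictly outside the closed wedge meets the wedge, hence the triangle $ABC$, only at $A$, so $E$ is exterior without any crossing argument.
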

\begin{lemma}
    Given that $\angle{BAC}$ ($B\neq C$) contains $\angle{DAE}$( $D\neq E$) ( but $B,C,D,E$ are not necessarily different), and the points $\alpha_{AB},\alpha_{AC},\alpha_{AD},\alpha_{AE}$ lie on the edge $AB,AC,AD,AE$, respectively. Suppose there is a point $\alpha_{A,DE}\neq A$ inside triangle $\triangle ADE$ then there exist $d >0$ such that for every $\alpha_{A,BC}\neq A$ inside triangle $\triangle ABC$ such that if the distance of $A$ and  $\alpha_{A,BC}$ smaller than $d$  then the edge $\alpha_{AB}-\alpha_{A,BC}-\alpha_{AC}$(zigzag line, joint of line $\alpha_{AB}$ to $\alpha_{A,BC}$ and $\alpha_{A,BC}$ to $\alpha_{AC}$) and $\alpha_{AD}-\alpha_{A,DE}-\alpha_{AE}$(zigzag line, joint of line $\alpha_{AD}$ to $\alpha_{A,DE}$ and $\alpha_{A,DE}$ to $\alpha_{AE}$) does not intersect other than the endpoints. 
\end{lemma}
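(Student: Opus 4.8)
The plan is to regard the inner zigzag $Z_{\mathrm{in}}=\alpha_{AD}\alpha_{A,DE}\cup\alpha_{A,DE}\alpha_{AE}$ as a \emph{fixed} compact polygonal path (all three of its vertices are prescribed) and to study the outer zigzag $Z_{\mathrm{out}}=\alpha_{AB}\alpha_{A,BC}\cup\alpha_{A,BC}\alpha_{AC}$ as its only free vertex $\alpha_{A,BC}$ is driven toward $A$. By convexity $Z_{\mathrm{in}}\subseteq\triangle ADE$, and it avoids $A$, since $\alpha_{AD},\alpha_{AE}$ are interior points of their edges and $\alpha_{A,DE}\neq A$. As $\alpha_{A,BC}\to A$, the outer zigzag degenerates to the \emph{corner path} $P=[\alpha_{AB},A]\cup[A,\alpha_{AC}]$, which lies entirely on the two bounding edges $AB$ and $AC$. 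First I would prove the elementary estimate that every point of $Z_{\mathrm{out}}$ lies within $\operatorname{dist}(A,\alpha_{A,BC})<d$ of $P$: writing a point of $\alpha_{AB}\alpha_{A,BC}$ as $(1-t)\alpha_{AB}+t\alpha_{A,BC}$ and comparing it with $(1-t)\alpha_{AB}+tA\in P$ gives a displacement $t(\alpha_{A,BC}-A)$ of length $<d$, and symmetrically on the other segment. Thus $Z_{\mathrm{out}}$ lies in the $d$-neighborhood of $P$, uniformly over all admissible $\alpha_{A,BC}$.

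The next step is to locate $Z_{\mathrm{in}}\cap P$. Using the hypothesis that $\angle BAC$ contains $\angle DAE$, the triangle $\triangle ADE$ occupies the angular sector $[\theta_D,\theta_E]$ at $A$, nested inside the sector $[\theta_B,\theta_C]$ of $\triangle ABC$. Since $\alpha_{A,DE}$ is interior to $\triangle ADE$, the path $Z_{\mathrm{in}}$ meets the ray $AD$ only at $\alpha_{AD}$ and the ray $AE$ only at $\alpha_{AE}$. Consequently, if the four rays $AB,AC,AD,AE$ are pairwise distinct, then $\triangle ADE$ meets the bounding rays $AB,AC$ only at the apex $A\notin Z_{\mathrm{in}}$, so $Z_{\mathrm{in}}\cap P=\emptyset$. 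The only way $Z_{\mathrm{in}}$ can reach $P$ is when $\triangle ADE$ shares a bounding edge with $\triangle ABC$ (say $AD=AB$), in which case $\alpha_{AD}=\alpha_{AB}$ is a common endpoint $p$ of the two zigzags and $Z_{\mathrm{in}}\cap P=\{p\}$. Because the two triangles are distinct, at most one such shared edge can occur, so $Z_{\mathrm{in}}\cap P$ is either empty or a single shared endpoint $p$.

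It remains to combine these two facts. If there is no shared endpoint, then $Z_{\mathrm{in}}\cap P=\emptyset$, so $\rho:=\operatorname{dist}(Z_{\mathrm{in}},P)>0$ by compactness; choosing $d<\rho$ forces $Z_{\mathrm{out}}\cap Z_{\mathrm{in}}=\emptyset$, since any common point would be within $d$ of $P$ yet at distance $\geq\rho$ from it. If there is a shared endpoint $p$, I would excise a small ball $B_\epsilon(p)$: outside it the compact set $K=Z_{\mathrm{in}}\setminus B_\epsilon(p)$ is disjoint from $P$, so the same distance argument (with $d<\operatorname{dist}(K,P)$) rules out intersections of $Z_{\mathrm{out}}$ with $K$; inside $B_\epsilon(p)$, for $\epsilon$ and $d$ small, both $Z_{\mathrm{in}}$ and $Z_{\mathrm{out}}$ reduce to the initial segments $[p,\alpha_{A,DE}]$ and $[p,\alpha_{A,BC}]$ emanating from $p$. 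These segments lie on distinct lines through $p$: the first is off the line $AD$ because $\alpha_{A,DE}$ is interior to $\triangle ADE$, while the direction of the second tends to $A-p$ as $\alpha_{A,BC}\to A$; hence for $d$ small they are non-parallel and meet only at $p$. Taking $d$ to be the minimum of the finitely many thresholds above yields the claim.

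The \textbf{main obstacle} is precisely the shared-endpoint case: there $Z_{\mathrm{in}}$ genuinely touches the limiting corner path $P$, so the clean neighborhood argument fails and must be supplemented by the local direction analysis at $p$, together with a verification that all thresholds on $d$ can be chosen uniformly in the position of $\alpha_{A,BC}$. Everything else reduces to compactness and the elementary observation that a thin triangle hugs its two legs. This non-crossing guarantee is exactly what the edge-drawing step of \cref{lem:TS2-planar} requires.
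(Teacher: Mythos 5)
Your proof is correct, but it takes a genuinely different route from the paper's. The paper argues one outer segment at a time via an angular visibility argument at the fixed endpoint: viewed from $\alpha_{AB}$, the inner zigzag is contained in the two angular sectors spanned by the direction pairs toward $\alpha_{AD},\alpha_{A,DE}$ and toward $\alpha_{AE},\alpha_{A,DE}$; since these three points lie on one side of the line $AB$, the limiting ray from $\alpha_{AB}$ toward $A$ lies outside both sectors, so for $\alpha_{A,BC}$ close enough to $A$ the ray (hence the segment) toward $\alpha_{A,BC}$ misses $Z_{\mathrm{in}}$, with coincidences like $\alpha_{AB}=\alpha_{AD}$ handled by substituting the ray toward $\alpha_{A,DE}$, and $d=\min(d_1,d_2)$. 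You instead argue globally and metrically: $Z_{\mathrm{out}}$ lies in the $d$-neighborhood of the corner path $P$, $Z_{\mathrm{in}}$ has positive distance from $P$ by compactness except at a possible shared endpoint $p$, where you excise a ball and compare initial directions at $p$. Both hinge on the same fact---$Z_{\mathrm{in}}$ stays clear of the bounding rays that $Z_{\mathrm{out}}$ hugs---but your version makes uniformity in $\alpha_{A,BC}$ transparent and isolates the one delicate configuration, while the paper's yields the threshold directly as an open angular condition at the price of fiddlier degenerate-ray bookkeeping. One cosmetic caveat: your step ``at most one shared edge'' assumes $\triangle ABC\neq\triangle ADE$, which the statement's letter (allowing $\{B,C\}=\{D,E\}$) does not force, though it always holds in the application in \cref{lem:TS2-planar}, and your excision argument extends verbatim to two shared endpoints; like the paper, you also implicitly read ``inside $\triangle ADE$'' as strictly interior, which matches the construction's intent.
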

\begin{proof}
    The proof of the first lemma is trivial to see. For the second lemma, we will find $d_1$ such that if the distance of $A$ and  $\alpha_{A,BC}$ smaller than $d_1$ then $\alpha_{AB}-\alpha_{A,BC}$ does not intersect $\alpha_{AD}-\alpha_{A,DE}-\alpha_{AE}$ in the inside or at the endpoint $\alpha_{A,BC}$. We will want ray $\alpha_{AB}-\alpha_{A,BC}$ to lie outside angles made by pairs of rays $\alpha_{AB}-\alpha_{AD}$,$\alpha_{AB}-\alpha_{A,DE}$ and $\alpha_{AB}-\alpha_{AE}$,$\alpha_{AB}-\alpha_{A,DE}$ (if $\alpha_{AB}=\alpha_{AE}$ the ray $\alpha_{AB}-\alpha_{AE}$ will be considered $\alpha_{AB}-\alpha_{A,DE}$ and same goes for $\alpha_{AB}=\alpha_{AD}$). Because all points $\alpha_{AD},\alpha_{AE},\alpha_{A,DE}$ lies on the same half plane divided by $AB$, ray $\alpha_{AB}-A$ lie outside of those angles. Hence for $d_1$ small enough, $\alpha_{AB}-\alpha_{A,BC}$ become close to $\alpha_{AB}-A$ and lie outside those angles. Similarly, we will find $d_2$ such that if the distance of $A$ and  $\alpha_{A,BC}$ is smaller than $d_2$ then $\alpha_{AC}-\alpha_{A,BC}$ does not intersect in the inside or at the endpoint $\alpha_{A,BC}$ and we just need $d=min(d_1,d_2)$.
\end{proof}

We can see that the points of form $\alpha_{ab}$ split the edge $ab$ into two lines $\alpha_{ab}-a$ and $\alpha_{ab}-b$ and each pair of lines of that form also do not intersect in the inside. Let $r(>0)$ be the smallest distance between the non-intersecting pairs. Returning to the drawing, for each vertex $u$, we will draw edge $\alpha_{uv}\alpha_{uw}$ based on the angle $\angle{wuv}$ (from small to large). When we iterate to triangle $\triangle{uvw}$, we take a point $\alpha_{u,vw}$ close enough to $u$ so that joints line $\alpha_{uv}-\alpha_{u,vw}-\alpha_{uw}$ do not intersect all the previous edges of the form $\alpha_{ua}-\alpha_{u,ab}-\alpha_{ub}$ such that angle $\angle{aub}$ is inside $\angle{vuw}$. . We can maintain drawing in the manner that the distance between $u$ and $\alpha_{u,vw}$ is smaller than $\frac{r}{2}$ and the angle $\angle{u\alpha_{u,vw}\alpha_{uv}}$,$\angle{u\alpha_{u,vw}\alpha_{uw}}>90^\circ
$ 

It remains to validate the drawing. From this point, we use $dis(A,l)$ to denote the distance between a vertex $A$ and a line $l$. Assume the drawing is not planar, then there exist 2 different triangles $\triangle uvw$ and $\triangle abc$ such that $\alpha_{uv}-\alpha_{u,vw}-\alpha_{uw}$ and $\alpha_{ab}-\alpha_{a,bc}-\alpha_{ac}$ intersect in the inside. Assume that they intersect at $t$. There are 2 cases to consider:
\begin{itemize}
    \item $u=a$. Since the ray $at$ is inside angle $\angle{bac}$ and $\angle{vaw}$ so one of them contain the other. Assume $\angle{bac}$ contain $\angle{vaw}$ this mean $\angle{bac}>\angle{vaw}$. However, we have already settled this case with our drawing. Hence the 2 joint lines can not intertsect in the inside.
    \item $u\neq a$. The line $\alpha_{uv}-u$ and $\alpha_{ab}-a$ will be non-intersecting and have the distance of at least $r$. Assume $\alpha_{uv}-\alpha_{u,vw}$ and $\alpha_{ab}-\alpha_{a,bc}$ intersect at $t$. Let $h_1,h_2$ be the foot of altitude from $t$ to $\alpha_{uv}u$ and from $t$ to $\alpha_{ab}a$, respectively.

    We know that $h_1,h_2$ lies inside $\alpha_{uv}-u$ and $\alpha_{ab}-a$,respectively since $\angle{u\alpha_{u,vw}\alpha_{uv}}$,$\angle{a\alpha_{a,bc}\alpha_{ab}}>90^\circ$. Hence,we can create a contradiction:
    $$r\leq h_1h_2\leq th_1+th_2\leq dis(\alpha_{a,bc},\alpha_{ab}-a)+dis(\alpha_{u,vw},\alpha_{uv}-u)\leq \alpha_{a,bc}a+\alpha_{u,vw}u<r$$

\end{itemize}
Hence, the drawing is planar.

\section{Property of $\sfTJ_4(G)$}\label{app:property_TJ_4}
\begin{lemma}
For a planar graph $G$, graph $\sfTJ_4(G)$ is acyclic and have maximum degree at most $4$.    
\end{lemma}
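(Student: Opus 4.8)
The plan is to fix a planar embedding of $G$ and use it to orient the edges of $\sfTJ_4(G)$ so tightly that both claims fall out together. First I would record the setup. Since $G$ is planar it contains no $K_5$, so $\omega(G)\le 4$ (if $\omega(G)<4$ there are no $4$-cliques and $\sfTJ_4(G)$ is empty, hence trivially acyclic of maximum degree $0$, so assume $\omega(G)=4$). The vertices of $\sfTJ_4(G)$ are exactly the $K_4$'s of $G$, and two of them are adjacent precisely when they share a common triangle $T$ (a $3$-clique), the two cliques being $T+x$ and $T+y$ for the two common neighbours $x,y$ of $T$. The geometric input I would isolate is that a triangle $T$ has at most two common neighbours, and in fact at most one lying in the interior of the Jordan region bounded by $T$ and at most one in the exterior: a second common neighbour inside the same region would have to reach all three corners of $T$ across the subdivision created by the first, which is impossible (equivalently, three common neighbours would give a $K_{3,3}$). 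Hence for every edge of $\sfTJ_4(G)$ the two common neighbours split as exactly one interior vertex and one exterior vertex.

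Next, for each $K_4$ $B$ I would invoke the standard fact that the induced drawing of $B$ has a unique vertex $\iota(B)$ lying inside the triangle formed by the other three. I then orient each edge of $\sfTJ_4(G)$ towards the clique whose private vertex lies in the interior of the shared triangle $T$: if $B=T+x$ with $x$ interior to $T$ and $B'=T+y$ with $y$ exterior, orient $B'\to B$. For the head $B$ of such an edge one checks $\iota(B)=x=B\setminus T$, so $T=B\setminus\iota(B)$ is forced; there is only one triangle of $B$ that can make $B$ a head, giving in-degree at most $1$. Dually, an out-edge at $B$ uses a triangle of $B$ containing $\iota(B)$, and there are only three such triangles, giving out-degree at most $3$. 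Adding these bounds yields maximum degree at most $4$.

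For acyclicity I would argue that the in-degree bound forces every cycle of $\sfTJ_4(G)$ to be a \emph{directed} cycle: if a cycle vertex received both its cycle-edges it would have in-degree at least $2$, and counting heads along the $m$ cycle-edges against the $m$ cycle-vertices (each a head at most once) shows each is a head exactly once. It then remains to exclude directed cycles. Along a directed cycle $B_0\to B_1\to\cdots\to B_{m-1}\to B_0$ with shared triangles $T_i=B_i\cap B_{i+1}$, the vertex $\iota(B_{i+1})$ is interior to $T_i$, while the following triangle satisfies $T_{i+1}=\{\iota(B_{i+1})\}\cup(\text{two vertices of }T_i)$; hence the region bounded by $T_{i+1}$ is strictly contained in the region bounded by $T_i$. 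The bounded regions therefore strictly shrink at every step, contradicting $T_m=T_0$, so no cycle exists.

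The main obstacle I anticipate is making the planar-topology steps fully rigorous rather than merely pictorial: precisely defining $\iota(B)$ and the interior/exterior split from the fixed embedding, and justifying the strict nesting of the triangular regions (in particular that $\iota(B_{i+1})$ is a genuine interior point of $T_i$, not on its boundary). Everything else is bookkeeping about $3$- and $4$-element subsets of a $4$-clique. As a sanity check on the conclusion, the smallest potential offender, a putative induced $4$-cycle on the cliques $\{a,b,c,e\},\{a,b,c,d\},\{a,b,d,f\},\{a,b,e,f\}$, forces $G$ to contain $K_{2,4}+ab+C_4$ on six vertices, which has $13>3\cdot 6-6$ edges and is therefore non-planar, in agreement with the lemma.
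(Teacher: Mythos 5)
Your proposal is correct (up to the Jordan-curve details you yourself flag, which are at the same level of rigor as the paper's own drawing-based reasoning), and it takes a genuinely different route from the paper. The paper proves acyclicity first, by an extremal argument in a \emph{straight-line} drawing: on a hypothetical cycle $A_0A_1\dots A_{n-1}$ of $\sfTJ_4(G)$ it picks a clique $A_1$ whose convex hull has minimum area, shows that both cycle-neighbours of $A_1$ must meet it in its hull triangle $BCD$ (otherwise the neighbour's private vertex would be trapped inside $BCD$, shrinking the hull), and then derives a contradiction because the private vertices of $A_0$ and $A_2$ would both have to lie outside $BCD$ --- the same ``at most one common neighbour of a triangle on each side'' fact that underlies your orientation. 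The paper's degree bound is then purely combinatorial and \emph{parasitic on acyclicity}: a vertex $X$ of degree at least $5$ yields, by pigeonhole on its four $3$-subsets, two neighbours $Y,Z$ with $X\cap Y=X\cap Z$, whence $YZ\in E(\sfTJ_4(G))$ and $XYZ$ is a triangle, contradicting the acyclicity just established. You instead fix a topological embedding, orient each edge toward the clique whose private vertex is interior to the shared triangle, and exploit the uniquely enclosed vertex $\iota(B)$ of each $K_4$; this yields in-degree at most $1$ and out-degree at most $3$ \emph{independently} of acyclicity, and your head-counting argument plus the strictly nested triangular regions along a would-be directed cycle replaces the paper's minimum-area extremal choice by a monotonicity argument. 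Concretely, your version buys independence from F\'ary's theorem (the paper tacitly assumes straight edges in order to speak of convex hulls and areas), a degree bound not conditioned on acyclicity, and strictly finer structural information (the in-degree-$1$ orientation); the paper's version buys a shorter degree argument and no orientation machinery. Both proofs share the same geometric kernel --- a triangle in a plane graph has at most one common neighbour inside and one outside its Jordan curve (three common neighbours would force a $K_{3,3}$) --- and your closing sanity check is sound: the union of the four cliques of a putative $4$-cycle has $13>3\cdot 6-6$ edges on six vertices.
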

\begin{proof}
    We refer $G$ to a graph with straight edge such that each pair of edges does not intersect inside. Let's assume that $TJ_4(G)$ has a cycle $C_n$ (suppose the set corresponding to the vertices in the cycle are $A_0, A_1, \ldots, A_{n-1}$). Let's assume that $A_1 = \{B, C, D, E\}$ has the convex hull with the minimum area among these sets, and let the triangle $BCD$ be its convex hull.

\begin{figure}[hbt!]
\centering
\begin{subfigure}{0.4 \textwidth}
	\centering
	\begin{tikzpicture}[dot/.style={draw,fill,circle,inner sep=2pt}]
\foreach \l [count=\n] in {0,1,2,3,{},n-2,n-1} {
\pgfmathsetmacro\angle{90-360/7*(\n-1)}
\ifnum\n=5
	\coordinate (n\n) at (\angle:1);
\else
	\node[dot,label={\angle:$A_{\l}$}] (n\n) at (\angle:1) {};
\fi
}
\draw (n6) -- (n7) -- (n1) -- (n2) -- (n3) -- (n4);
\draw[dashed,shorten >=5pt] (n4) -- (n5);
\draw[dashed,shorten >=5pt] (n6) -- (n5);
\end{tikzpicture}
\caption{Cylce $C_n$}
\end{subfigure}
\begin{subfigure}[b]{0.4\textwidth}
\centering
\begin{tikzpicture}[every node/.style = {circle, draw, fill=white, minimum size=1cm}]
	\foreach \l/\x/\y/\c in {C/0/0/{C}, D/3/0/{D}, B/1.5/3/{B}, E/1.5/1/{E}}
	{
	\node (\l) at (\x, \y) {$\c$};
	}
	\draw (B)--(C)--(D)--(B)--(E)--(D);
	\draw (C)--(E);
\end{tikzpicture}
\caption{cliques $A_1$}
\end{subfigure}  
\caption{Cycle in $\sfTJ_4(G)$ and element $A_1$ in the cycle}
\end{figure}

We prove that $A_0 \cap A_1 = \{B, C, D\}$. Assume otherwise. We must have $E \in (A_0 \cap A_1)$. Let $A_0 \setminus A_1 = {F}$. Then, there must be an edge between $E$ and $F$ in $G$. Since the edge $E-F$ does not intersect any edges $A-B$, $B-C$, or $A-C$, $F$ must be a vertex inside triangle $BCD$. Then, all vertices of $A_0$ must be inside triangle $BCD$, implying that the convex hull of set $A_0$ has an area less than $BCD$, which contradicts the assumption that $A_1$ has the convex hull with the minimum area among the cycle. Hence, $A_0 \cap A_1 = {B, C, D}$.

Similarly, we have $A_1 \cap A_2 = \{B, C, D\}$. Let's assume $A_0 = {B, C, D, F}$ and $A_2 = {B, C, D, H}$. If $F$ is inside triangle $BCD$, then it must be inside one of the three triangles $BCE$, $BDE$, or $CDE$, which means the edge between $F$ and the other vertices cuts one of the three edges in that triangle (for example, if $F$ is inside triangle $BCE$, then the edge $FD$ must cut one of the edges $BC$, $BE$, or $CE$). Hence, $F$ (and similarly $H$) is outside of triangle $BCD$. However, we cannot find such two vertices outside the triangle $BCD$ so that none of the edges $BF$, $CF$, $DF$, $BH$, $CH$, $DH$ intersect, indicating a contradiction, and we can prove that the graph has no cycle.

Now, assume that there is a vertex $A$ in $TJ_4(G)$ such that $\text{deg}(X) \geq 5$. Since $X$ only has 4 subsets of 3-elements, there must be 2 different vertices $Y$ and $Z$ incident with $X$ such that $X \cap Y = X \cap Z$. Hence, $(X \cap Y) \subset (Y \cap Z)$, and therefore, $Y \cap Z$ has exactly 3 elements and $Y,Z$ are adjacent in $TJ_4(G)$, indicating a contradiction. Hence, every vertex in $TJ_4(G)$ has a degree $\leq 4$.

\end{proof}

\end{document}